\documentclass{amsart}[12pt]
\usepackage{amssymb,amsmath}
\usepackage{enumerate}
\usepackage[english]{babel}
\usepackage{color}
\usepackage{comment}

\newtheorem{teo}{Theorem}[section]
\newtheorem{pro}{Proposition}[section]
\newtheorem{cor}{Corollary}[section]

\newtheorem{lm}{Lemma}[section]

\theoremstyle{definition}
\newtheorem{rem}{Remark}[section]
\newtheorem{df}{Definition}[section]

\newtheorem{ex}{Example}[section]

\title[Noncommutative modulated ergodic theorems]{$T$-admissible processes and Noncommutative Weighted Ergodic Theorems}
\keywords{Semifinite von Neumann algebra, noncommutative ergodic theorems, $T$-admissible processes, weighted ergodic theorems}
\subjclass[2020]{47A35, 46L51}

\author{Morgan O'Brien}
\address{Minnesota State University, Mankato\\ Department of Mathematics and Statistics \\ Mankato, MN 56001, USA}
\email{morgan.obrien.2@mnsu.edu, obrienmorganc@gmail.com}

\begin{document}
\begin{abstract}
	In this article, we study the bilaterally almost uniform (b.a.u.) convergence of weighted averages of a positive Dunford-Schwartz operator on the noncommutative $L_p$-spaces associated to a semifinite von Neumann algebra for a large number of weighting sequences. We do this by extending the classical ``subsequence argument'' to the noncommutative setting. This is then used to establish a large number of sequences satisfying a certain decay condition as good weights for the noncommutative individual ergodic theorem. This class includes those sequences generated by bounded i.i.d. sequences and the M\"{o}bius function.

	We also study similar problems for $T$-admissible processes on a semifinite von Neumann algebra, showing that if a Wiener-Wintner type ergodic theorem holds for a class $\mathcal{W}\subset W_q$ of weights for $T$-additive processes, then it also holds for strongly $p$-bounded $T$-admissible processes, assuming that the duality $\frac{1}{p}+\frac{1}{q}=1$ holds and that $T$ is a normal $\tau$-preserving $*$-automorphism. A version of this result for arbitrary positive Dunford-Schwartz operators is also considered.
\end{abstract}
\date{September 1, 2026}
\maketitle

\begin{center}\textit{Dedicated to the memory of Professor Do\u{g}an \c{C}\"{o}mez.}\end{center}

\section{Introduction}\label{s1}

The study of weighted ergodic averages is a frequently studied subject in ergodic theory. For a fixed sequence $\alpha=(\alpha_k)_{k=0}^{\infty}\subset\mathbb{C}$ consider the weighted averages
\begin{align}\label{StandardAverages}\Big(M_n^{\alpha}(T_\phi)(f)\Big)(\omega):=\frac{1}{n}\sum_{k=0}^{n-1}\alpha_kf(\phi^k(\omega)),\end{align} where $(\Omega,\mathcal{F},\nu)$ is a probability space, $\omega\in\Omega$, $f\in L_1(\Omega,\nu)$, and $\phi:\Omega\to\Omega$ is a measure-preserving transformation.

In classical ergodic theory it is common to study which choices of sequence $\alpha$ guarantees the convergence of the averages (\ref{StandardAverages}) above for every probability space $(\Omega,\mathcal{F},\nu)$, measure-preserving transformation $\phi:\Omega\to\Omega$, $f\in L_1(\Omega,\nu)$, and $\nu$-a.e. $\omega\in\Omega$. If a sequence $\alpha$ manages to satisfy this, then it is said to be \textit{good for the individual ergodic theorem on $L_1$}. Some examples of good sequences for the individual ergodic theorem on $L_1$ are:
\begin{itemize}
\item[(i)] a linear power of a unit circle element, i.e. $\alpha_k=\lambda^{k}$ for a fixed $\lambda\in\mathbb{T}$ (equivalently, $\alpha_k=e^{2\pi itk}$ for some fixed $t\in\mathbb{R}$) \cite{ww41},
\item[(ii)] a bounded Besicovitch sequence \cite{rn},
\item[(iii)] a polynomial power of a unit circle element, i.e. $\alpha_k=e^{2\pi iP(k)}$ for a fixed real polynomial $P\in\mathbb{R}[X]$ \cite[Theorem 1]{lesigne},
\item[(iv)] a Hartman almost-periodic sequence with correlation and discrete spectral measure \cite[Theorem 5.2]{bl},
\item[(v)] a dynamically generated sequence, i.e. $\alpha_k=g(\psi^k(z))$, where $(Z,\mathcal{G},\rho)$ is a probability space, $\psi:Z\to Z$ is a measure-preserving transformation, $g\in L_\infty(Z,\rho)$, and $z\in Z$ \cite{bourgain88,bfko89},
\item[(vi)] one generated by a bounded i.i.d. sequence, i.e. $\alpha_k=Y_k(z)$ with $(Y_k)_{k=0}^{\infty}$ being a bounded i.i.d. sequence on a probability space $(Z,\mathcal{G},\rho)$ and $z\in Z$ \cite[Theorem 2]{assani1},
\item[(vii)] $\alpha_k=\mu(k)$, where $\mu$ denotes the M\"{o}bius function \cite[Proposition 3.1]{eakpldlr}, 
\item[(viii)] $\alpha_k=\lambda(k)$, where $\lambda$ denotes the Lioville function \cite[Lemma 1]{BatemanChowla}, \cite{cw},
\item[(ix)] a totally balanced automatic sequence or an invertible automatic sequence \cite[Theorems B and C]{ek}, 
\item[(x)] a $q$-multiplicative sequence \cite{LeMa}, \cite{LeMaMo}, and \cite[Theorems 5 and 6]{fan},
\end{itemize}

The Wiener-Wintner ergodic theorem states that if $(\Omega,\mathcal{F},\mu)$ is a probability space, $\phi:\Omega\to\Omega$ is a measure-preserving transformation, and if $f\in L_1(\Omega,\mu)$, then there exists a set $\Omega_{f}\subseteq\Omega$ (independent of any weight $(\alpha_k)_{k=0}^{\infty}$) such that for every $\omega\in\Omega_{f}$ the averages in (\ref{StandardAverages}) above converge as $n\to\infty$ with $\alpha_k=\lambda^k$ for every $\lambda\in\mathbb{T}$. If a similar type of result holds for every weight $\alpha=(\alpha_k)_{k=0}^{\infty}$ in a family $\mathcal{W}$ of weights, then it is said that a Wiener-Wintner type result holds for $\mathcal{W}$. It is known that each of the families of sequences in (i) through (vi) satisfy a Wiener-Wintner type result, as well as (xi) through (xiii) below (with appropriate modifications).

If one fixes $p>1$ and considers the results above for every $f\in L_p(\Omega,\mu)$ instead of $L_1(\Omega,\mu)$, and if $1<q<\infty$ is such that $\frac{1}{p}+\frac{1}{q}=1$, then one can extend this list to include:
\begin{itemize}
\item[(xi)] $q$-Besicovitch sequences \cite[Theorem 3.5]{lot},
\item[(xii)] $g\in L_q(Z,\rho)$ instead of $g\in L_\infty(Z,\nu)$ in (v) above,
\item[(xiii)] $(Y_k)_{k=0}^{\infty}$ being an i.i.d. sequence with $\mathbb{E}(|Y_0|^q)<\infty$ instead of being bounded in (vi) above,
\item[(xiv)] $\alpha_k=\Lambda(k)$, where $\Lambda$ denotes the von Mangoldt function \cite{wie88}.
\end{itemize}
Such sequences are said to be \textit{good for the individual ergodic theorem on $L_p$}. 
The condition $\frac{1}{p}+\frac{1}{q}=1$ usually comes from usage of H\"{o}lder's inequality and is not always needed (such results are usually referred to as \textit{breaking duality}; see \cite[Theorem 1.12]{demeterjones} for (xi), and \cite{dltt} for (xii), and \cite{assani1} for (xiii) for a few examples). A class of Besicovitch sequences not satisfying the usual duality assumptions is known to be good for the individual ergodic theorem on $L_p$ as shown by Demeter and Jones in \cite[Theorem 1.12]{demeterjones}. An important fact about the sequences in this second list of weights is that they are not necessarily bounded, which can add some complexities to their proofs.

In Baxter and Olsen \cite[Theorem 2.19]{bo} it was shown that $f(\phi^k(\omega))$ in (\ref{StandardAverages}) above can be replaced by $(T^k(f))(\omega)$, where $T:L_1(\Omega,\nu)\to L_1(\Omega,\nu)$ is a linear operator such that $\|T(f)\|_p\leq\|f\|_p$ for every $1\leq p\leq\infty$ and $f\in L_p(\Omega,\nu)$; such an operator is called a \textit{Dunford-Schwartz operator}. \c{C}\"{o}mez, Lin, and Olsen expanded the work in \cite{comlio} to also shown that an analogous replacement can be made in (v) and (xii).

Another often studied problem in ergodic theory concerns the convergence of subadditive and superadditive processes. The weighted versions of these can be difficult to study directly (or even define properly), so we will focus on the class of $T$-admissible processes. These replace the ($T$-additive) sequence $(T^k(f))_{k=0}^{\infty}$ with a family $(f_k)_{k=0}^{\infty}$ of real-valued functions on $\Omega$ such that $T(f_k)\leq f_{k+1}$ for every $k\in\mathbb{N}_0$; call a sequence satisfying this latter condition $T$-admissible. These generate $T$-superadditive sequences by letting $F_n(\omega)=\sum_{k=0}^{n}f_k(\omega)$ and $F=(F_n)_{n=0}^{\infty}$ (meaning that $F_{n+m}\geq F_n+T^n(F_m)$ for every $m,n\geq0$) for $\omega\in\Omega$. One study of the weighted averages of these (including a Wiener-Wintner type result) was done by \c{C}\"{o}mez and Litvinov in \cite{comli2} when $T$ is the Koopman operator of a measure-preserving transformation of a probability space. Furthermore, it was proven that strongly $p$-bounded $T$-admissible sequences generate good weights for the individual ergodic theorem (similar to cases (v) and (xii) above), and conditions for which such sequences are bounded Besicovitch and $q$-Besicovitch sequences are found in \cite[Theorem 3.4]{comli2}.

One application of this type of result can be found in a remark in \cite{comli2} (at the end of Section 3 of that paper) to recurrence properties of dynamical systems. There they consider the return times of sets that are allowed to grow as $n$ tends to infinity (compared to, say, Poincar\'{e}'s recurrence theorem which has the same set for every time). In short, if $(\Omega,\mathcal{F},\nu)$ is a probability space and $\phi:\Omega\to\Omega$ is a measure-preserving transformation, then write $T_\phi(f):=f\circ\phi$ for the associated Koopman operator. If one considers a family $(A_k)_{k=0}^{\infty}\subset\mathcal{F}$ such that $A_k\subseteq A_{k+1}$ for every $k\in\mathbb{N}_0$ then the sequence $(\chi_{\phi^{-k}(A_k)})_{k=0}^{\infty}$ is a strongly $\infty$-bounded $T_{\phi}$-admissible sequence, which would imply that the averages
$$\frac{1}{n}\sum_{k=0}^{n-1}\chi_{A_k}(\phi^k(\omega))$$ converge as $n\to\infty$ for $\nu$-a.e. $\omega\in\Omega$. If the transformation $\phi$ is ergodic then the limit function is the $\nu$-a.e. constant function taking the value $\nu\left(\bigcup_{k=0}^{\infty}A_k\right)$.

In the noncommutative setting the list of weights that are good for the individual ergodic theorem is much shorter than the one given above. For the unweighted averages $\alpha=(1)_{k=0}^{\infty}$ the first result was shown by Yeadon in \cite[Theorem 1]{ye} for $L_1(\mathcal{M},\tau)$, which was later extended to $L_p(\mathcal{M},\tau)$ with $1\leq p<\infty$ by Junge and Xu in \cite[Corollary 6.4]{jx} (with a.u. convergence when $p\geq2$). In \cite[Lemma 4.2 and Theorem 4.6]{cls} Chilin, Litvinov, and Skalski generalized (i) and (ii) in the list above to hold in the noncommutative $L_1$-space associated to a semifinite von Neumann algebra $(\mathcal{M},\tau)$ under the additional assumption that the von Neumann algebra $\mathcal{M}$ has a separable predual (which is equivalent to $\mathcal{M}$ having an action on a separable Hilbert space). This was was later extended to every other $L_p(\mathcal{M},\tau)$ for $1<p<\infty$ by Chilin and Litvinov in \cite[Theorem 3.5]{cl1}. The author showed in \cite[Corollary 3.10]{ob4} that case (xi) holds in the noncommutative setting by proving an appropriate maximal inequality for unbounded weights. Litvinov proved that a Wiener-Wintner type result holds for (i) if $T$ is an ergodic normal $\tau$-preserving $T$-homomorphism in \cite[Theorem 5.1]{li2}. Hong and Sun extended this in \cite[Theorem 1.3]{hs} by proving that case (v) holds in a Wiener-Wintner type form for normal $\tau$-preserving $*$-automorphisms (where a multiparameter version of the result was considered). 

Outside of the list, \c{C}\"{o}mez and Litvinov showed in \cite[Theorem 5.3]{comli3} that one can consider a weight sequence consisting of operators instead of scalars. 
The author showed in \cite{ob2} that a large number of Hartman sequences satisfy a Wiener-Wintner type result for a special class of positive Dunford-Schwartz operators which satisfy a certain convergence condition on their iterates (such a condition holds when the restriction of $T$ to $L_2(\mathcal{M},\tau)$ self-adjoint); this class includes those whose restriction to $L_2$ are self-adjoint. 
The Hartman sequences considered include all of the cases (i) through (xiv) in the list above in some capacity, with bounded sequences extending to holding on $L_1(\mathcal{M},\tau)$ (where the methods mentioned do not necessarily hold since it is not a reflexive Banach space).

Some results relating to the b.a.u convergence of $T$-subadditive sequences were found by \cite[Theorem 1]{ja}. The only result regarding $T$-admissible sequences in particular in the noncommutative setting that the author is aware of was proven by \c{C}\"{o}mez and Litvinov in \cite[Theorem 3.5]{comli1}, where the $L_p$-norm convergence of moving average sequences for $T$-admissible processes was shown to hold.

There are two main goals we wish to prove in this article relating to the previously mentioned types of problems.

The first main goal greatly expands the list of good weights for the individual ergodic theorem in the noncommutative setting. This is done primarily in Theorem \ref{ConvergenceCriteriaI}, where it is shown that if a bounded weighting sequence $\alpha=(\alpha_k)_{k=0}^{\infty}\subset\mathbb{C}$ satisfies a certain boundedness condition on the exponential averages. 
Specifically this is done by imposing a bound on
$$\sup_{\lambda\in\mathbb{T}}\left|\frac{1}{n}\sum_{k=0}^{n-1}\alpha_k\lambda^k\right|$$ in a way so 
that they would be summable along appropriate subsequences, from which it will follow that it is a good weight on $L_p(\mathcal{M},\tau)$ for every $1\leq p<\infty$, for every positive Dunford-Schwartz operator $T\in DS^+(\mathcal{M},\tau)$, and for every semifinite von Neumann algebra $(\mathcal{M},\tau)$. This is the same method as the classical setting used in proving cases (vi)-(x) and some special cases of (iii) and (v) from  the list above. For completeness we will write out many of the details in applying the procedure to bounded i.i.d. sequences afterwards, as we will also show that i.i.d. sequences with finite $q$-th moment are good weights on $L_p(\mathcal{M},\tau)$ as long as $1<p<\infty$ and $\frac{1}{p}+\frac{1}{q}=1$, adding (xiii) to the list of good weights for von Neumann algebras. 

The second goal concerns $T$-admissible sequences in the noncommutative setting. In particular, Theorem \ref{TAdmSequencesAreGood} shows that if $\alpha$ is a good weight for a normal $\tau$-preserving $*$-automorphism $T$ on $L_p(\mathcal{M},\tau)$, then it is a good weight for any strongly $p$-bounded $T$-admissible process as well as long as the standard H\"{o}lder duality holds. This result will be proven in a Wiener-Wintner form, which allows for multiple sequences to be considered at the same time. A version of this result for arbitrary positive Dunford-Schwartz operators is also considered in Theorem \ref{TAdmDS}.

\section{Preliminaries}\label{Preliminaries}

Throughout this article $\mathbb{N}$ will denote the set of natural numbers, $\mathbb{N}_0:=\mathbb{N}\cup\{0\}$, and $\mathbb{T}:=\{\lambda\in\mathbb{C}:|\lambda|=1\}$ will be the unit circle. We will write $R[X]$ for the set of polynomials over the variable $X$ with coefficients in $R\in\{\mathbb{R},\mathbb{C}\}$. If $t\in\mathbb{R}$ we will let $[t]$ denote the greatest integer less than or equal to $t$, i.e. $[t]\in\mathbb{Z}$ is the unique integer such that $[t]\leq t<[t]+1$. If needed we will write $\frac{0}{0}:=0$.

For the rest of this article $\mathcal{H}$ will denote a fixed complex Hilbert space. Write $\textbf{1}:\mathcal{H}\to\mathcal{H}$ for the identity operator $\textbf{1}(\xi)=\xi$ for all $\xi\in\mathcal{H}$. If $\mathcal{X}$ is a complex Banach space and $T:\mathcal{X}\to\mathcal{X}$ is a bounded linear operator, then we will write $\|T\|_{\mathcal{X}\to\mathcal{X}}$ for its operator norm. For the special case where $\mathcal{X}=\mathcal{H}$ we will instead write $\|\cdot\|_{\infty}:=\|\cdot\|_{\mathcal{H}\to\mathcal{H}}$.

Call the pair $(\mathcal{M},\tau)$ a \textit{semifinite von Neumann algebra} if $\mathcal{M}$ is a von Neumann algebra acting on $\mathcal{H}$ and if $\tau$ is a normal semifinite faithful trace on $\mathcal{M}$. Let $\mathcal{P(M)}$ denote the set of all projections in $\mathcal{M}$, and write $e^{\perp}:=\textbf{1}-e$ for every $e\in\mathcal{P(M)}$.

An operator $x:\mathcal{D}_x\to\mathcal{H}$, where $\mathcal{D}_{x}\subset\mathcal{H}$, is affiliated with $\mathcal{M}$ if $yx\subseteq xy$ for every $y$ in the commutant $\mathcal{M}^{\prime}$ of $\mathcal{M}$. For such $x$, it is called $\tau$-measurable if for every $\epsilon>0$ there exists $e\in\mathcal{P(M)}$ such that $\tau(e^\perp)\leq\epsilon$ and $xe\in\mathcal{M}$.

Write $L_0(\mathcal{M},\tau)$ for the set of all $\tau$-measurable operators affiliated with $\mathcal{M}$. The \textit{measure topology} is defined on $L_0(\mathcal{M},\tau)$ by the following sets of neighborhoods of the constant zero operator $0$:
$$V(\epsilon,\delta):=\{x\in L_0(\mathcal{M},\tau):\|xe\|_\infty\leq\delta\text{ for some }e\in\mathcal{P(M)}\text{ with }\tau(e^\perp)\leq\epsilon\}.$$ $L_0(\mathcal{M},\tau)$ equipped with this topology and the closed sum, closed product, and adjoint operator turn it into a completely metrizable topological $*$-algebra. See \cite{ne} for more details.

If $(x_n)_{n=1}^{\infty}\subset L_0(\mathcal{M},\tau)$ and $x\in L_0(\mathcal{M},\tau)$, then $x_n\to x$ \textit{bilaterally almost uniformly} (abbreviated \textit{b.a.u.}) as $n\to\infty$ if for every $\epsilon>0$ there exists $e\in\mathcal{P(M)}$ such that $\tau(e^\perp)\leq\epsilon$ and $\|e(x_n-x)e\|_\infty\to0$ as $n\to\infty$. If instead $\|(x_n-x)e\|_\infty\to0$ as $n\to\infty$ then say that $x_n\to x$ \textit{almost uniformly} (abbreviated \textit{a.u.}). One can see that a.u. convergence implies b.a.u. convergence, but the converse is not true in general. It is known that both b.a.u. and a.u. convergence implies convergence in $L_0(\mathcal{M},\tau)$ with respect to the measure topology (see \cite{cls}).

If $x\in L_0(\mathcal{M},\tau)$, then write $x\geq0$ if $\langle x(\xi),\xi\rangle\geq0$ for every $\xi\in\mathcal{D}_{x}$. If $E\subseteq L_0(\mathcal{M},\tau)$, then write $E^{+}:=\{x\in E:x\geq0\}$.

If $x\in L_0(\mathcal{M},\tau)$, then it has a polar decomposition $x=u|x|$, where $u\in\mathcal{M}$ satisfies $\|u\|_\infty\leq1$ and $|x|\in L_0(\mathcal{M},\tau)^+$ satisfies $|x|^2=x^*x$. If $x\in L_0(\mathcal{M},\tau)^+$, then we may write it in its spectral decomposition as $x=\int_{[0,\infty)}\lambda de_{\lambda}$. We can extend $\tau$ from $\mathcal{M}^{+}$ to $L_0(\mathcal{M},\tau)^+$ by letting
$$\tau(x):=\sup_{n\in\mathbb{N}}\tau\left(\int_{[0,n]}\lambda de_{\lambda}\right).$$

If $1\leq p<\infty$ then for every $x\in L_0(\mathcal{M},\tau)$ define $\|x\|_p:=\tau(|x|^p)^{1/p}$. Let $L_p(\mathcal{M},\tau):=\{x\in L_0(\mathcal{M},\tau):\|x\|_p<\infty\}$. Write $L_\infty(\mathcal{M},\tau)=\mathcal{M}$. Then $L_p(\mathcal{M},\tau)$ is a Banach space with the norm $\|\cdot\|_p$ for every $1\leq p\leq\infty$. Whenever it won't cause confusion we may write $L_p$ instead of $L_p(\mathcal{M},\tau)$ if $p=0$ or $1\leq p\leq\infty$. It is known that the inclusion maps $L_p\hookrightarrow L_0$ are continuous for $1\leq p<\infty$, so that $L_p$-norm convergence implies convergence with respect to the measure topology on $L_0(\mathcal{M},\tau)$.

Let $T:L_1+\mathcal{M}\to L_1+\mathcal{M}$ be a linear operator. Then $T$ is called a \textit{Dunford-Schwartz operator} if $\|T(x)\|_p\leq\|x\|_p$ for every $1\leq p\leq\infty$ and $x\in L_p(\mathcal{M},\tau)$. The operator is called \textit{positive} if $T(x)\geq0$ whenever $x\geq0$. Write $DS^{+}(\mathcal{M},\tau)$ for the set of all positive Dunford-Schwartz operators.

Fix $1\leq q\leq\infty$ and $\alpha=(\alpha_k)_{k=0}^{\infty}$. If $q<\infty$ then write $$\|\alpha\|_{W_q}:=\left(\limsup_{n\to\infty}\frac{1}{n}\sum_{k=0}^{n-1}|\alpha_k|^q\right)^{1/q}$$ and $|\alpha|_{W_q}$ for the same quantity with $\limsup_{n\to\infty}$ replaced by $\sup_{n\in\mathbb{N}}$. If $q=\infty$ then write $\|\alpha\|_{W_\infty}=|\alpha|_{W_\infty}:=\sup_{k\in\mathbb{N}_0}|\alpha_k|$. Let $$W_q:=\{\alpha=(\alpha_k)_{k=0}^{\infty}\subset\mathbb{C}:\|\alpha\|_{W_q}<\infty\}.$$ Then $W_q$ is a vector space and $\|\cdot\|_{W_q}$ is a seminorm on it for every $1\leq q\leq\infty$. One can find that $\|\alpha\|_{W_q}<\infty$ if and only if $|\alpha|_{W_q}<\infty$.

Given $1\leq p,q\leq\infty$, $T\in DS^+(\mathcal{M},\tau)$, $x\in L_p(\mathcal{M},\tau)$, and $\alpha=(\alpha_k)_{k=0}^{\infty}\in W_q$, then for every $t\geq1$ write the weighted average of $x$ under $T$ weighted by $\alpha$ as
$$M_t^{\alpha}(T)(x):=\frac{1}{t}\sum_{k=0}^{[t]-1}\alpha_k T^k(x).$$
Our focus will be on the convergence of these averages for $t\in\mathbb{N}$, in which case we will write $(M_n^{\alpha}(T)(x))_{n=1}^{\infty}$ with $M_n^{\alpha}(T)(x)=\frac{1}{n}\sum_{k=0}^{n-1}\alpha_kT^k(x)$. The extension for all other values of $t\in[0,\infty)\setminus\mathbb{N}$ is used for notational convenience later on and for consistency with particular results in the literature. If $\alpha=(1)_{k=0}^{\infty}$ then we will write $M_t(T)$ for the unweighted averages of $T$ instead of $M_t^{\alpha}(T)$ for every $t\geq1$.

If $1\leq p,q\leq\infty$, $T\in DS^+(\mathcal{M},\tau)$, and $\mathcal{W}\subset W_q$ are fixed, then for $x\in L_p(\mathcal{M},\tau)$ write $x\in bWW_p^{T}(\mathcal{W})$ if for every $\epsilon>0$ there exists $e\in\mathcal{P(M)}$ such that $\tau(e^\perp)\leq\epsilon$ and
$$(eM_n^{\alpha}(T)(x)e)_{n=1}^{\infty}\text{ converges with respect to }\|\cdot\|_\infty\text{ in }\mathcal{M}\text{ for every }\alpha\in\mathcal{W}.$$
In other words, if $x\in bWW_p^{T}(\mathcal{W})$ then $(M_n^{\alpha}(T)(x))_{n=1}^{\infty}$ converges b.a.u. as $n\to\infty$ for every $\alpha\in\mathcal{W}$, and for every $\epsilon>0$ the same projection can be used for each $\alpha\in\mathcal{W}$. The formulation of the b.a.u. convergence of weighted ergodic averages can be written in this notation by using $\mathcal{W}=\{\alpha\}$ for the relevant weight $\alpha\in W_q$.

A few important results from the literature that we will need to use in this article can be summarized as follows.

\begin{teo}\label{NonCommDunSchErgThm}(Cf. \cite[Theorem 1]{ye}, \cite[Corollary 6.4]{jx})
Let $(\mathcal{M},\tau)$ be a semifinite von Neumann algebra, $T\in DS^+(\mathcal{M},\tau)$, and $1\leq p<\infty$. If $x\in L_p(\mathcal{M},\tau)$, then the (unweighted) averages $(M_n(T)(x))_{n=1}^{\infty}$ converge b.a.u. to some operator $F_T(x)\in L_p(\mathcal{M},\tau)$ as $n\to\infty$. This convergence occurs a.u. if $p\geq2$.
\end{teo}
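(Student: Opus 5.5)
This theorem is quoted from the literature (Yeadon \cite{ye} for $p=1$, Junge--Xu \cite{jx} for $1<p<\infty$), so the plan is to reconstruct the standard argument rather than find a new one. It has three steps: a maximal inequality, convergence on a dense subspace, and the noncommutative Banach principle to pass from the dense subspace to all of $L_p$.

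\emph{Step 1: maximal inequality.} For $p=1$ I will use Yeadon's weak-type estimate. Given $x\in L_1^+$ and $\lambda>0$, there is $e\in\mathcal{P(M)}$ with
\[
\tau(e^\perp)\leq \frac{\|x\|_1}{\lambda}, \qquad \|eM_n(T)(x)e\|_\infty\leq\lambda \ \text{ for all } n.
\]
Its proof mimics the Hopf maximal lemma, using positivity of $T$ and the contractivity $\|T\|_{\infty\to\infty}\le 1$. For $1<p<\infty$ I will instead use the Junge--Xu strong type $(p,p)$ inequality in the vector-valued space $L_p(\ell_\infty)$:
\[
\big\|{\sup_n}^{+} M_n(T)(x)\big\|_p\leq C_p\|x\|_p .
\]
This is obtained by interpolating between the trivial $L_\infty$ bound and the weak $(1,1)$ bound. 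For $p\ge2$ one gets the column version $L_p(\ell_\infty^c)$, which controls $\|(M_n(T)(x))e\|_\infty$ and so gives a.u. estimates. These inequalities are the main obstacle, since they require the Marcinkiewicz-type interpolation for $L_p(\ell_\infty)$. I will take them as known from \cite{ye,jx}.

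\emph{Step 2: a dense set of good elements.} I will use the splitting $L_p=\ker(I-T)\oplus\overline{(I-T)L_p}$ for $1<p<\infty$, which holds because $L_p$ is reflexive and $T$ is a contraction. On fixed points the averages are constant. On a coboundary $x=y-T(y)$ with $y\in L_p\cap\mathcal{M}$ we have
\[
M_n(T)(x)=\frac{y-T^n(y)}{n},
\]
whose operator norm is at most $2\|y\|_\infty/n\to0$. So convergence holds uniformly, in particular a.u., on a set dense in $L_p$. For $p=1$ I will use the dense set $L_1\cap L_2$ and run the same decomposition inside $L_2$.

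\emph{Step 3: Banach principle.} Fix $x\in L_p$ and $\epsilon>0$. Choose good elements $x_k$ with $\|x-x_k\|_p<4^{-k}$. Applying the maximal inequality to $x-x_k$ at thresholds $2^{-k}$ gives projections $e_k$ with $\sum_k\tau(e_k^\perp)<\epsilon$ and
\[
\sup_n\|e_kM_n(T)(x-x_k)e_k\|_\infty\le 2^{-k}.
\]
I also shrink each $e_k$ so that $(e_kM_n(T)(x_k)e_k)_n$ is Cauchy. Setting $e=\bigwedge_k e_k$, the sequence $(eM_n(T)(x)e)_n$ is Cauchy in $\mathcal{M}$, with the column version in place of the two-sided one when $p\ge2$. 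Its limit $F_T(x)$ lies in $L_0$. It is in $L_p$ by Fatou and lower semicontinuity of $\|\cdot\|_p$ under measure convergence, given $\|M_n(T)(x)\|_p\le\|x\|_p$.
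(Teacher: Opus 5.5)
Your outline is correct and is the standard argument behind the results the paper quotes: Yeadon's weak-type $(1,1)$ estimate and the Junge--Xu $L_p(\ell_\infty)$ and column estimates, uniform convergence on $\ker(I-T)+(I-T)(L_p\cap\mathcal{M})$, the noncommutative Banach principle, and a Fatou argument for $F_T(x)\in L_p$. The paper gives no proof of this theorem and simply cites \cite{ye} and \cite{jx}. The one step to state more carefully is $p=1$: elements of $L_1\cap L_2$ are good because you first run Steps 1--3 in $L_2$, not because of the $L_2$ splitting alone. That splitting only gives approximation in $\|\cdot\|_2$, and $\ker(I-T)\oplus\overline{(I-T)L_1}$ need not be dense in $L_1$ when $\tau(\textbf{1})=\infty$ (e.g.\ the shift on $\ell_\infty(\mathbb{Z})$).
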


\begin{teo}\label{WeightedErgodicTheoryResults}(Cf. \cite[Theorem 3.4]{ob2}, \cite[Corollary 3.5]{ob4})
Let $(\mathcal{M},\tau)$ be a semifinite von Neumann algebra, $T\in DS^{+}(\mathcal{M},\tau)$, and $1\leq p,q\leq\infty$ satisfy either $1<p,q<\infty$ and $\frac{1}{p}+\frac{1}{q}=1$ or $p=1$ and $q=\infty$. If $x\in L_p(\mathcal{M},\tau)$, then there exists a constant $C_p>0$ (depending only on $p$) such that for every $\lambda>0$ there exists $e\in\mathcal{P(M)}$ with $$\tau(e^\perp)\leq\left(\frac{C_p\|x\|_p}{\lambda}\right)^{p} \ \text{ and } \ \sup_{(n,\alpha)\in\mathbb{N}\times W_q}\left\|e\left(\frac{1}{|\alpha|_{W_q}}M_n^{\alpha}(T)(x)\right)e\right\|_\infty\leq\lambda.$$
Consequently, if $\mathcal{W}\subseteq W_q$ then $bWW_p^{T}(\mathcal{W})$ is a closed subspace of $L_p(\mathcal{M},\tau)$.
\end{teo}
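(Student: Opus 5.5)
The plan is to deduce the weighted maximal inequality from the unweighted one, which is implicit in Theorem \ref{NonCommDunSchErgThm}: Yeadon's weak type $(1,1)$ inequality for $p=1$, and the Junge--Xu maximal inequality for $1<p<\infty$. The reduction uses positivity of $T$ and a pointwise domination of the weighted averages by unweighted ones. Closedness of $bWW_p^T(\mathcal{W})$ then follows by a standard Banach-principle argument.

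\emph{Step 1 (reductions).} Write $x=\sum_{j=0}^{3}i^jx_j$ with $x_j\geq0$ and $\|x_j\|_p\leq\|x\|_p$. Also write $\alpha=\sum_{l=0}^{3}i^l\beta^{(l)}$ with $\beta^{(l)}_k\geq0$ and $|\beta^{(l)}|_{W_q}\leq|\alpha|_{W_q}$. Since $M_n^\alpha(T)$ is linear in both $\alpha$ and $x$, it suffices to handle $x\geq0$ and $\alpha\geq0$ normalized so that $|\alpha|_{W_q}=1$. We then find one projection per $x_j$ and intersect the resulting four projections, using $\tau\big((\bigwedge_j e_j)^\perp\big)\leq\sum_j\tau(e_j^\perp)$. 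The factor $16$ this produces is absorbed into $C_p$.

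\emph{Step 2 (domination).} For $p=1$, $q=\infty$ we have $0\leq\alpha_k\leq1$, so positivity of $T$ gives $0\leq M_n^\alpha(T)(x)\leq M_n(T)(x)$. Hence $0\leq eM_n^\alpha(T)(x)e\leq eM_n(T)(x)e$ for every $n$ and every admissible $\alpha$. Yeadon's inequality gives $e$ with $\tau(e^\perp)\leq\|x\|_1/\lambda$ and $\sup_n\|eM_n(T)(x)e\|_\infty\leq\lambda$, and the bound transfers uniformly in $\alpha$.

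For $1<p<\infty$, apply the scalar Young inequality $\beta t\leq\beta^q/q+t^p/p$ through functional calculus to the positive operator $a=T^k(x)/\lambda$. This gives $\alpha_kT^k(x)/\lambda\leq\frac{\alpha_k^q}{q}\mathbf 1+\frac1p\big(T^k(x)/\lambda\big)^p$. Averaging over $k<n$ and using $\frac1n\sum_{k<n}\alpha_k^q\leq1$ yields
\[
\frac{1}{\lambda}M_n^\alpha(T)(x)\leq\frac1q\mathbf 1+\frac{1}{p\lambda^p}\cdot\frac1n\sum_{k=0}^{n-1}\big(T^k(x)\big)^p .
\]
When $1<p\leq2$, $t\mapsto t^p$ is operator convex, so the Jensen-type inequality for positive contractions gives $(T^kx)^p\leq T^k(x^p)$. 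The right-hand side is then dominated by $\frac1q\mathbf1+\frac1{p\lambda^p}M_n(T)(x^p)$. Applying Yeadon to $x^p\in L_1$ at level $\lambda^p$ gives $e$ with $\tau(e^\perp)\leq\|x\|_p^p/\lambda^p$ and $\|eM_n^\alpha(T)(x)e\|_\infty\leq\lambda$, uniformly in $n$ and $\alpha$.

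\emph{Main obstacle.} The case $p>2$ is where I expect the real difficulty, because $t^p$ is no longer operator convex. There I would instead factor $x=x^{1/2}x^{1/2}$ and use the Junge--Xu maximal inequality in $L_p$ (the asymmetric/column form behind the a.u.\ statement). I would try to bound $\frac1n\sum_k\alpha_kT^k(x)$ by a noncommutative Hölder argument over the index $k$, viewing $(\alpha_k)$ as an element of $\ell_q$ and $(T^kx)$ in an $\ell_p$-valued $L_p$ space. An alternative is a layer-cake decomposition of $\alpha$ into level sets $\{2^{j-1}<\alpha_k\leq2^j\}$, whose densities are controlled by $|\alpha|_{W_q}$, with the unweighted maximal inequality applied on each layer and the geometric series summed.

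\emph{Closedness.} $bWW_p^T(\mathcal{W})$ is clearly a subspace. Let $x_m\to x$ in $L_p$ with $x_m\in bWW_p^T(\mathcal{W})$, and fix $\epsilon>0$. Choose a subsequence with $\|x-x_{m_j}\|_p$ small enough that the maximal inequality applied to $x-x_{m_j}$ at level $1/j$ gives $f_j$ with $\tau(f_j^\perp)\leq\epsilon2^{-j-1}$. Choose $e_j$ with $\tau(e_j^\perp)\leq\epsilon2^{-j-1}$ witnessing membership of $x_{m_j}$, and set $e=\bigwedge_j(e_j\wedge f_j)$, so that $\tau(e^\perp)\leq\epsilon$. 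Then for every $\alpha\in\mathcal{W}$,
\[
\limsup_{n,n'}\|e(M_n^\alpha-M_{n'}^\alpha)(T)(x)e\|_\infty\leq 2|\alpha|_{W_q}/j
\]
for every $j$. Hence $(eM_n^\alpha(T)(x)e)_n$ is Cauchy in $\mathcal{M}$, which shows $x\in bWW_p^T(\mathcal{W})$.
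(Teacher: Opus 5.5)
Your reductions, the case $p=1$, $q=\infty$ (domination by $M_n(T)(x)$ plus Yeadon's inequality), and the Banach-principle proof that $bWW_p^T(\mathcal{W})$ is a closed subspace are all sound. The paper itself gives no argument here; it quotes \cite[Theorem 3.4]{ob2} and \cite[Corollary 3.5]{ob4}.

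The genuine gap is the range $2<p<\infty$, i.e.\ $1<q<2$, which the statement covers but your proposal leaves open. You apply Young's inequality to $T^k(x)$, so you then need $(T^k(x))^p\le T^k(x^p)$. That operator Jensen inequality is unavailable once $t\mapsto t^p$ is no longer operator convex. Neither of your fallbacks is carried out, and the layer-cake version does not close as you describe it. Dominating the average over a layer $\{k:2^{j-1}<\alpha_k\le 2^j\}$ by the unweighted average $M_n(T)(x)$ discards the sparsity of the layer, so contributions of size $2^j$ are not summable. Extracting a density gain from a sparse layer in $L_p$ is again a H\"older-type estimate, which is exactly the step at issue.

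The missing idea is to apply Young's inequality to $x$ \emph{before} applying $T^k$. For $x\in L_p^+$, $\alpha_k\ge 0$ and $\lambda>0$, functional calculus gives $\frac{\alpha_k}{\lambda}x\le\frac{\alpha_k^q}{q}\mathbf{1}+\frac{1}{p\lambda^p}x^p$, an inequality between elements of $L_1+\mathcal{M}$. Since $T^k$ is positive on $L_1+\mathcal{M}$ and $T^k(\mathbf{1})\le\mathbf{1}$, averaging over $k<n$ gives, whenever $|\alpha|_{W_q}\le 1$,
\[
\frac{1}{\lambda}M_n^{\alpha}(T)(x)\le\frac{1}{q}\Big(\frac{1}{n}\sum_{k=0}^{n-1}\alpha_k^q\Big)\mathbf{1}+\frac{1}{p\lambda^p}M_n(T)(x^p)\le\frac{1}{q}\mathbf{1}+\frac{1}{p\lambda^p}M_n(T)(x^p).
\]
Now apply Yeadon's inequality to $x^p\in L_1^+$ at level $\lambda^p$. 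This gives $e$ with $\tau(e^\perp)\le\|x\|_p^p/\lambda^p$ and $0\le eM_n^{\alpha}(T)(x)e\le\lambda e$ for all $n$ and all such $\alpha$. This handles every $1<p<\infty$ at once. It also spares you, in your case $1<p\le 2$, from having to justify the Jensen inequality for unbounded $x\in L_p^+$.
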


\section{Noncommutative Weighted Ergodic Theorems}\label{NCWET}

\subsection{General Method}\label{GeneralMethod}

In this subsection we will prove a general method that one can use to prove a variety of weighted ergodic theorems in the von Neumann algebra setting. In particular, we will adapt a method frequently used in the classical setting which was employed to prove that i.i.d. sequences and the M\"{o}bius function generate good weights, among others mentioned in the introduction. 

\begin{lm}\label{DecreasingToZeroImpliesBAUConvergent}
	Let $(\mathcal{M},\tau)$ be a semifinite von Neumann algebra. If $(x_n)_{n=0}^{\infty}\subset L_0(\mathcal{M},\tau)^{+}$ is a decreasing sequence (with respect to the ordering $\leq$ on $L_0(\mathcal{M},\tau)$) such that $x_n\to0$ in measure, then $x_n\to0$ b.a.u. as $n\to\infty$.
\end{lm}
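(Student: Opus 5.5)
Since $x_n\to0$ in measure, we may pass to a subsequence $(x_{n_j})_{j=1}^{\infty}$ converging to zero quickly. Concretely, we choose $n_1<n_2<\cdots$ so that $x_{n_j}\in V(\epsilon 2^{-j},2^{-j})$. This gives projections $e_j\in\mathcal{P(M)}$ with
$$\tau(e_j^\perp)\leq \epsilon 2^{-j}\quad\text{and}\quad\|x_{n_j}e_j\|_\infty\leq 2^{-j}.$$
Set $e:=\bigwedge_{j} e_j$. Then $\tau(e^\perp)\leq\sum_j\tau(e_j^\perp)\leq\epsilon$, and for every $j$ we have $e\leq e_j$. Hence
$$\|e x_{n_j} e\|_\infty\leq \|x_{n_j}e\|_\infty=\|x_{n_j}e_je\|_\infty\leq 2^{-j}.$$
So along the subsequence we get uniform convergence after compressing by a single projection $e$.

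The key step is to use monotonicity to pass from the subsequence to the full sequence. For $n_j\leq n$ we have $0\leq x_n\leq x_{n_j}$, so $0\leq ex_ne\leq ex_{n_j}e$. Both operators are bounded here, since $ex_{n_j}e\in\mathcal{M}$. For positive operators, $0\leq a\leq b$ implies $\|a\|_\infty\leq\|b\|_\infty$. Therefore $\|ex_ne\|_\infty\leq 2^{-j}$ for all $n\geq n_j$, which gives $\|ex_ne\|_\infty\to0$. This is exactly b.a.u. convergence to $0$.

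The point needing care, and the main obstacle, is the operator inequality with unbounded $\tau$-measurable operators. We must check that $x_n\leq x_{n_j}$ in $L_0$ implies $ex_ne\leq ex_{n_j}e$, and that $ex_ne$ is bounded once $ex_{n_j}e$ is. I would handle this through quadratic forms. Compression $y\mapsto eye$ preserves the order on $L_0(\mathcal{M},\tau)^+$, since $\langle eye\xi,\xi\rangle=\langle ye\xi,e\xi\rangle$ on a suitable core; in measurable-operator language it is just $a^*ya$ with $a=e$. Then from $0\leq ex_ne\leq ex_{n_j}e\in\mathcal{M}$ we conclude that $ex_ne$ is bounded with the stated norm bound: its form is dominated by a bounded form, and $\tau$-measurable operators are determined by their closed forms. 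Everything else is routine bookkeeping with the neighborhoods $V(\epsilon,\delta)$ and countable subadditivity of $\tau$ on projections, $\tau(\bigvee e_j^\perp)\leq\sum\tau(e_j^\perp)$.
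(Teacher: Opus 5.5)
Your argument is correct and is essentially the paper's proof: you obtain a subsequence $x_{n_j}$ with $\|ex_{n_j}e\|_\infty\to0$ for a single projection $e$ of small co-trace, then use $0\le ex_ne\le ex_{n_j}e$ for $n\ge n_j$ and monotonicity of the norm on $\mathcal{M}^+$ to control the whole tail. The differences are cosmetic: you extract the subsequence directly from the neighborhoods $V(\epsilon 2^{-j},2^{-j})$ instead of citing the Riesz-type subsequence theorem, and you get boundedness of $ex_ne$ from domination by $ex_{n_j}e\in\mathcal{M}$, whereas the paper simply shrinks $e$ so that every $x_ne$ lies in $\mathcal{M}$.
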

\begin{proof}
	By \cite[Proposition 1]{gl} it is known that since the sequence converges to $0$ in measure there exists a subsequence $(x_{n_k})_{k=0}^{\infty}$ of the original sequence $(x_n)_{n=0}^{\infty}$ such that $x_{n_k}\to0$ a.u., and so b.a.u., as $k\to\infty$.
	
	Assume $\epsilon,\delta>0$, and let $e\in\mathcal{P(M)}$ be such that $\tau(e^\perp)\leq\epsilon$ and $\|ex_{n_k}e\|_\infty\to0$ as $k\to\infty$. Let $K\in\mathbb{N}_0$ be such that $k\geq K$ implies $\|ex_{n_k}e\|_\infty\leq\delta$. If $n\geq n_K$, then it follows that $x_n\leq x_{n_K}$ in $L_0(\mathcal{M},\tau)^+$ since $(x_n)_{n=0}^{\infty}$ is decreasing. Therefore it follows that $0\leq ex_ne\leq ex_{n_K}e$ in $L_0(\mathcal{M},\tau)^{+}$ as well; furthermore, the projection $e$ can be chosen so that $x_ne\in\mathcal{M}$ for each $n\in\mathbb{N}_0$, so that the inequality actually holds in $\mathcal{M}^{+}$. Hence, $$0\leq\|ex_ne\|_\infty\leq\|ex_{n_K}e\|_\infty\leq\delta$$ by properties of the supremum norm and the fact that $z\leq\|z\|_\infty\textbf{1}$ for $z\in\mathcal{M}^+$. Since $n\geq n_K$ and $\delta>0$ were arbitrary, it follows that $\|ex_ne\|_\infty\to0$ as $n\to\infty$, and so $x_n\to0$ b.a.u. as $n\to\infty$ since $\epsilon>0$ was arbitrary as well.
\end{proof}

\begin{lm}\label{SummableImpliesBAUconvergent}
	Let $(\mathcal{M},\tau)$ be a semifinite von Neumann algebra and fix $1\leq p<\infty$. If $(x_n)_{n=0}^{\infty}\subset L_p(\mathcal{M},\tau)^{+}$ is such that $\sum_{k=0}^{\infty}x_k\in L_p(\mathcal{M},\tau)^+$ (with convergence being with respect to the $\|\cdot\|_p$-norm), then $x_n\to0$ b.a.u. as $n\to\infty$.
\end{lm}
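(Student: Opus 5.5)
The plan is to reduce the statement to Lemma \ref{DecreasingToZeroImpliesBAUConvergent} by dominating $x_n$ with the tails of the convergent series. Put $s:=\sum_{k=0}^{\infty}x_k\in L_p(\mathcal{M},\tau)^+$ and, for each $n$, set $y_n:=\sum_{k=n}^{\infty}x_k=s-\sum_{k=0}^{n-1}x_k$. The series defining $y_n$ converges in $\|\cdot\|_p$ because it is a tail of a norm-convergent series, and $\|y_n\|_p\to0$ as $n\to\infty$ since $y_n$ is exactly the tail of that series.

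The first step is to verify the order properties of $(y_n)$. Each partial sum $\sum_{k=n}^{N}x_k$ is positive, and $L_p(\mathcal{M},\tau)^+$ is closed in $\|\cdot\|_p$. One way to see this closedness is that $z\geq0$ if and only if $\tau(zw)\geq0$ for all $w\in L_{p'}^+$, and each such functional is continuous; alternatively, the positive cone is closed in measure. Hence $y_n\geq0$. Moreover $y_n-y_{n+1}=x_n\geq0$, so $(y_n)_{n=0}^{\infty}$ is a decreasing sequence in $L_0(\mathcal{M},\tau)^+$.

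Next, because $L_p$-convergence implies convergence in measure (this is the continuity of the inclusion $L_p\hookrightarrow L_0$ noted in the preliminaries), $y_n\to0$ in measure. Lemma \ref{DecreasingToZeroImpliesBAUConvergent} therefore gives $y_n\to0$ b.a.u.: for each $\epsilon>0$ there is $e\in\mathcal{P(M)}$ with $\tau(e^\perp)\leq\epsilon$ and $\|ey_ne\|_\infty\to0$.

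The last step is domination. Since $y_n-x_n=y_{n+1}\geq0$, we have $0\leq x_n\leq y_n$, and hence $0\leq ex_ne\leq ey_ne$. As in the proof of Lemma \ref{DecreasingToZeroImpliesBAUConvergent}, $e$ may be chosen so that $y_ne\in\mathcal{M}$, which makes these bounded operators. Then $\|ex_ne\|_\infty\leq\|ey_ne\|_\infty\to0$. The only delicate point is this boundedness of $ex_ne$, and it follows from $0\leq ex_ne\leq ey_ne\in\mathcal{M}^+$ via the usual fact that an operator dominated by a bounded positive operator is bounded, with norm at most that operator's norm. Apart from this, and the closedness of the positive cone used in the first step, everything is routine.
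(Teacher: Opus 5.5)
Your proof is correct and follows essentially the paper's route: your tails $y_n=\sum_{k\geq n}x_k$ are exactly the paper's decreasing sequence $y-y_{n-1}$, to which Lemma \ref{DecreasingToZeroImpliesBAUConvergent} is applied. The only difference is the last step: you use the order domination $0\leq x_n\leq y_n$ to get $\|ex_ne\|_\infty\leq\|ey_ne\|_\infty$, whereas the paper writes $x_n$ as the difference of two consecutive tails and applies the triangle inequality; both work equally well.
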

\begin{proof}
	If $y_n:=\sum_{k=0}^{n}x_k\in L_p^{+}$ for every $n\in\mathbb{N}_0$ and $y:=\sum_{k=0}^{\infty}x_k\in L_p^+$, then by assumption it follows that $\|y-y_n\|_p\to0$ as $n\to\infty$. Since $y_n\geq0$ for every $n\in\mathbb{N}_0$ it follows that $y\in L_0^+$ as well since $L_0^+$ is closed with respect to the measure topology; furthermore, since $(y_n)_{n=0}^{\infty}$ is an increasing sequence one can find that $y_n\leq y$ and $y-y_{n+1}\leq y-y_{n}$ for every $n\in\mathbb{N}_0$. Since convergence with respect to the norm $\|\cdot\|_p$ of $L_p(\mathcal{M},\tau)$ implies convergence in measure, it follows that $y-y_n\to0$ in measure. Therefore $(y-y_n)_{n=0}^{\infty}$ is a decreasing sequence in $L_0^+$ which converges to $0$ in measure, hence $y-y_n\to0$ b.a.u. as $n\to\infty$ by Lemma \ref{DecreasingToZeroImpliesBAUConvergent}.
	
	Assume $\epsilon>0$ and let $e\in\mathcal{P(M)}$ be such that $\tau(e^\perp)\leq\epsilon$ and $\|e(y-y_n)e\|_\infty\to0$ as $n\to\infty$. Since $x_n=y_n-y_{n-1}$ for every $n\in\mathbb{N}_0$, we find that
	$$\|ex_ne\|_\infty\leq\|e(y_n-y)e\|_\infty+\|e(y-y_{n-1})e\|_\infty\to0\text{ as }n\to\infty.$$ Therefore $\|ex_ne\|_\infty\to0$ as $n\to\infty$, and since $\epsilon>0$ was arbitrary it follows that $x_n\to0$ b.a.u. as $n\to\infty$.
\end{proof}

The following lemma, a version of the ``subsequence argument'' which allows one to prove the convergence of averages by proving them along lacunary subsequences, will play key role in the proofs of our results that follow. It is also the main spot where issues relating to noncommutativity arise since the standard proof of this result has one apply $\limsup_{m\to\infty}$ to each term in a chain of inequalities of nonnegative numbers, but taking the $\limsup$ of a sequence of positive operators does not necessarily make sense in the von Neumann algebra setting. Fortunately, properties of the ordering $\leq$ on $\mathcal{M}^{+}$ and $L_0^{+}$ and the norm of $\mathcal{M}$ will allow a similar argument to work with a few careful modifications, as will be seen in the proof.

The classical version of this result often states that a certain assumption holds for all $\rho>1$, but its proof actually only needs this assumption to hold for each element of particularly chosen sequences in $(1,\infty)$ that decrease to $1$. Using this later condition (instead of ``for all $\rho>1$'') is necessary later for the proof that i.i.d. sequences are good weights for the individual ergodic theorem which uses the version stated here to guarantee the set of $\omega\in\Omega$ for which the result holds is measurable.

\begin{lm}\label{SubsequenceArgument}
	Let $(x_k)_{k=0}^{\infty}\subset L_0(\mathcal{M},\tau)^{+}$. For every $t\geq1$ write
	$$A_t:=\frac{1}{t}\sum_{k=0}^{[t]-1}x_k.$$
	
	Let $(\rho_j)_{j=0}^{\infty}\subset(1,\infty)$ be a sequence such that $\rho_j\to1$ as $j\to\infty$ and such that $\{\rho_j^m:m\in\mathbb{N}\}\subseteq\{\rho_\ell^m:m\in\mathbb{N}\}$ whenever $j\leq\ell$. Assume further that for every $j\in\mathbb{N}_0$ there exists $y_{j}\in L_0(\mathcal{M},\tau)^{+}$ such that $A_{\rho_j^m}\to y_{j}$ b.a.u. as $m\to\infty$.
	
	Then $y_{j}=y_{\ell}$ for every $j,\ell\in\mathbb{N}_0$, and if $y$ denotes this common limit then the full sequence $(A_n)_{n=1}^{\infty}$ converges to $y$ b.a.u. as $n\to\infty$.
\end{lm}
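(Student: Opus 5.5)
The plan is to imitate the classical sandwich argument, replacing $\limsup$ by operator inequalities compressed by a single well-chosen projection. We then control norms using the fact that for self-adjoint $a\leq z\leq b$ in $\mathcal{M}$ one has $\|z\|_\infty\leq\max(\|a\|_\infty,\|b\|_\infty)$.

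\emph{Identification of the limits.} Fix $j\leq\ell$. By the hypothesis $\{\rho_j^m:m\in\mathbb{N}\}\subseteq\{\rho_\ell^m:m\in\mathbb{N}\}$, and since $\rho_j^m$ is strictly increasing in $m$, the sequence $(A_{\rho_j^m})_m$ is a subsequence of $(A_{\rho_\ell^{m}})_m$. Hence it converges b.a.u. to $y_\ell$ as well as to $y_j$. Since b.a.u. convergence implies convergence in the (Hausdorff) measure topology, we get $y_j=y_\ell$. Call the common limit $y$.

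\emph{Choice of one projection.} Fix $\epsilon>0$. For each $j$ choose $e_j\in\mathcal{P(M)}$ with $\tau(e_j^\perp)\leq\epsilon2^{-j-2}$ and $\|e_j(A_{\rho_j^m}-y)e_j\|_\infty\to0$. Using $\tau$-measurability, also choose projections $f$ and $f_k$ ($k\in\mathbb{N}_0$) with $ye\!\!\!\;\,f\in\mathcal{M}$, $x_kf_k\in\mathcal{M}$, and traces of complements summing to at most $\epsilon/2$. Let $e$ be the infimum of all these projections. Then $\tau(e^\perp)\leq\epsilon$, and $e$ has the following properties:
\begin{itemize}
\item every $ex_ke$, hence every $eA_te$, lies in $\mathcal{M}^+$;
\item $eye\in\mathcal{M}^+$;
\item since convergence along $e_j$ passes to the smaller projection $e$, we have $\|e(A_{\rho_j^m}-y)e\|_\infty\to0$ as $m\to\infty$ for every $j$ simultaneously;
\item in particular, for each $j$ the quantity $C_j:=\sup_m\|eA_{\rho_j^m}e\|_\infty$ is finite.
\end{itemize}

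\emph{Sandwich.} Fix $j$, write $\rho=\rho_j$, and let $\rho^m\leq n<\rho^{m+1}$. Then $[\rho^m]\leq n\leq[\rho^{m+1}]$. Since the $x_k$ are positive, this gives
\begin{align*}
\rho^{-1}A_{\rho^m}\leq\frac{\rho^m}{n}A_{\rho^m}\leq A_n\leq\frac{\rho^{m+1}}{n}A_{\rho^{m+1}}\leq\rho A_{\rho^{m+1}}.
\end{align*}
Compressing by $e$ and subtracting $eye$ yields
\[
\rho^{-1}eA_{\rho^m}e-eye\;\leq\; e(A_n-y)e\;\leq\;\rho eA_{\rho^{m+1}}e-eye
\]
in $\mathcal{M}$. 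By the norm bound for order intervals,
\[
\|e(A_n-y)e\|_\infty\leq(\rho-1)C_j+\max\bigl(\|e(A_{\rho^m}-y)e\|_\infty,\|e(A_{\rho^{m+1}}-y)e\|_\infty\bigr).
\]
Here we used $|\rho^{-1}-1|\leq\rho-1$. Letting $n\to\infty$ (so $m\to\infty$) gives $\limsup_n\|e(A_n-y)e\|_\infty\leq(\rho_j-1)C_j$.

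\emph{Main obstacle.} The hard step is making the constant $(\rho_j-1)C_j$ vanish as $j\to\infty$; a priori $C_j$ depends on $j$. To handle this, note that $\|eA_{\rho_j^m}e\|_\infty\to\|eye\|_\infty$ as $m\to\infty$. So in the $\limsup$ above, $C_j$ can be replaced by $\|eye\|_\infty$, which is a bound independent of $j$. Letting $j\to\infty$ then gives $\|e(A_n-y)e\|_\infty\to0$, which is the asserted b.a.u. convergence.

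The noncommutative care lies in two places. First, $e$ must make all the $eA_te$ and $eye$ bounded, which is why it is built as an infimum over countably many projections. Second, the order-interval norm estimate replaces the classical termwise $\limsup$.
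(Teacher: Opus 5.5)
Your proof is correct and follows the paper's strategy. As in the paper, you identify the $y_j$ through subsequences and the Hausdorff measure topology, take one projection $e$ as a countable infimum so that every $eA_te$ is bounded and $\|e(A_{\rho_j^m}-y)e\|_\infty\to0$ for all $j$ at once, compress the sandwich $\rho_j^{-1}A_{\rho_j^m}\le A_n\le\rho_jA_{\rho_j^{m+1}}$, and let $j\to\infty$ after a $\limsup$ whose constant is a multiple of $\|eye\|_\infty$. The only difference, and a slightly cleaner one, is that you bound $\|e(A_n-y)e\|_\infty$ directly via the estimate $\|z\|_\infty\le\max(\|a\|_\infty,\|b\|_\infty)$ for self-adjoint $a\le z\le b$, so convergence to $y$ follows at once, whereas the paper first shows $(eA_ne)_{n}$ is Cauchy and then identifies the limit along the subsequence $(A_{[\rho_0^m]})_m$.
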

\begin{proof}
	Observe that $(A_{\rho_{\ell}^m})_{m=1}^{\infty}$ is a subsequence of $(A_{\rho_{j}^m})_{m=1}^{\infty}$ whenever $j\leq\ell$ by the subset condition on $(\rho_j)_{j=0}^{\infty}$ from the assumption. Since b.a.u. convergence implies convergence in measure and since $L_0(\mathcal{M},\tau)$ is Hausdorff with the measure topology, it follows that $y_{j}=y_{\ell}$ for every $\ell,j\in\mathbb{N}_0$. Let $y$ denote this common b.a.u. limit in $L_0(\mathcal{M},\tau)^{+}$. 
	
	Assume $\epsilon>0$. Since $A_{\rho_j^m}\to y$ b.a.u. as $m\to\infty$ for every $j\in\mathbb{N}_0$ by assumption, for every such $j$ there exists $e_{j}\in\mathcal{P(M)}$ such that $\tau(e_{j}^{\perp})\leq\frac{\epsilon}{2^{j+1}}$, $e_{j}x_ke_{j}\in\mathcal{M}^{+}$ for every $k\in\mathbb{N}_0$, and such that $\|e_{j}(A_{\rho_j^{m}}-y)e_{j}\|_\infty\to0$ as $m\to\infty$. Let $e=\bigwedge_{j=0}^{\infty}e_j$, noting that $ex_ke\in\mathcal{M}^{+}$ for every $k\in\mathbb{N}_0$ and that $\tau(e^\perp)\leq\epsilon$ and $\|e(A_{\rho_j^{m}}-y)e\|_\infty\to0$ as $m\to\infty$ for every $j\in\mathbb{N}_0$.
	
	Fix $j\in\mathbb{N}_0$. If $t\geq\rho_j$, then let $m\in\mathbb{N}$ be such that $\rho_j^m\leq t<\rho_j^{m+1}$ (noting that $[\rho_j^m]\leq[t]\leq[\rho_j^{m+1}]$ as well). With respect to the ordering on $L_0^{+}$ we have that
	$$A_t=\frac{1}{t}\sum_{k=0}^{[t]-1}x_k
	\leq\frac{1}{\rho_j^m}\sum_{k=0}^{[\rho_j^{m+1}]-1}x_k
	=\rho_j\left(\frac{1}{\rho_j^{m+1}}\sum_{k=0}^{[\rho_j^{m+1}]-1}x_k\right)
	=\rho_j A_{\rho_j^{m+1}}$$
	and
	$$A_t=\frac{1}{t}\sum_{k=0}^{[t]-1}x_k
	\geq\frac{1}{\rho_j^{m+1}}\sum_{k=0}^{[\rho_j^m]-1}x_k
	=\frac{1}{\rho_j}\left(\frac{1}{\rho_j^m}\sum_{k=0}^{[\rho^m]-1}x_k\right)
	=\frac{1}{\rho_j}A_{\rho_j^m}.$$
	Hence we have that
	$$0\leq\frac{1}{\rho_j}A_{\rho_j^m}\leq A_t\leq \rho_j A_{\rho_j^{m+1}}$$ in $L_0^{+}$, which then implies that 
	\begin{align}\label{TheLIMSUPLemmaInequality}0\leq e\left(\frac{1}{\rho_j}A_{\rho_j^m}\right)e\leq eA_{t}e\leq e\left(\rho_j A_{\rho_j^{m+1}}\right)e\end{align} in $\mathcal{M}^+$. Observe that 
	\begin{align*}
		\left\|e\left(\rho_{j} A_{\rho_{j}^{m+1}}-\frac{1}{\rho_{j}}A_{\rho_{j}^m}\right)e\right\|_\infty
		\leq&\left\|e\left(\rho_{j} A_{\rho_{j}^{m+1}}-A_{\rho_{j}^{m+1}}\right)e\right\|_\infty \\
		&+\left\|e\left(A_{\rho_{j}^{m+1}}-A_{\rho_{j}^m}\right)e\right\|_\infty \\
		&+\left\|e\left(A_{\rho_{j}^{m}}-\frac{1}{\rho_{j}^m}A_{\rho_{j}^{m}}\right)e\right\|_\infty \\
		=&(\rho_{j}-1)\|eA_{\rho_{j}^{m+1}}e\|_\infty \\
		&+\|e(A_{\rho_{j}^{m+1}}-A_{\rho_{j}^m})e\|_\infty \\
		&+\left(1-\frac{1}{\rho_{j}}\right)\|eA_{\rho_{j}^m}e\|_\infty
	\end{align*}
	
	Since $\|e(A_{\rho_{j}^m}-y)e\|_\infty\to0$ as $m\to\infty$, it follows that $\|e(A_{\rho_{j}^{m+1}}-A_{\rho_{j}^m})e\|_\infty\to0$ and $\|eA_{\rho_{j}^m}e\|_\infty\to\|eye\|_\infty$ as well, both as $m\to\infty$. Applying $\limsup_{m\to\infty}$ to both sides of this inequality then shows that
	\begin{align*}
		\limsup_{m\to\infty}\left\|e\left(\rho_{j} A_{\rho_{j}^{m+1}}-\frac{1}{\rho_{j}}A_{\rho_{j}^m}\right)e\right\|_\infty
		\leq&\limsup_{m\to\infty}\left((\rho_{j}-1)\|eA_{\rho_{j}^{m+1}}e\|_\infty\right) \\
		& +\limsup_{m\to\infty}\left(\|e(A_{\rho_{j}^{m+1}}-A_{\rho_{j}^m})e\|_\infty\right) \\
		& +\limsup_{m\to\infty}\left(\left(1-\frac{1}{\rho_{j}}\right)\|eA_{\rho_{j}^m}e\|_\infty\right) \\
		=&(\rho_{j}-1)\|eye\|_\infty+0+\left(1-\frac{1}{\rho_{j}}\right)\|eye\|_\infty \\
		=&\left(\rho_{j}-\frac{1}{\rho_{j}}\right)\|eye\|_\infty.
	\end{align*}
	
	Fix $\delta>0$. From the $\limsup_{m\to\infty}$ above, $(eA_{\rho_{j}^m}e)_{m=1}^{\infty}$ being Cauchy with respect to $\|\cdot\|_\infty$, and Inequality (\ref{TheLIMSUPLemmaInequality}) we find that there exists $N_{j}\in\mathbb{N}$ such that $m,n\geq N_{j}$ implies
	$$\left\|e\left(\rho A_{\rho_{j}^{m+1}}-A_t\right)e\right\|_\infty
	\leq\left\|e\left(\rho_{j} A_{\rho_{j}^{m+1}}-\frac{1}{\rho_{j}}A_{\rho_{j}^m}\right)e\right\|_\infty
	\leq\left(\rho_{j}-\frac{1}{\rho_{j}}\right)\|eye\|_\infty+\frac{\delta}{12},$$
	$$\left\|e\left(A_t-\frac{1}{\rho_{j}}A_{\rho_{j}^m}\right)e\right\|_\infty
	\leq\left\|e\left(\rho_{j} A_{\rho_{j}^{m+1}}-\frac{1}{\rho_{j}}A_{\rho_{j}^m}\right)e\right\|_\infty
	\leq\left(\rho_{j}-\frac{1}{\rho_{j}}\right)\|eye\|_\infty+\frac{\delta}{12},$$
	$$\left\|e\left(A_{\rho_{j}^{m}}-A_{\rho_{j}^{n}}\right)e\right\|_\infty\leq\frac{\delta}{4\rho_{j}}.$$

	Therefore, if $s,t\geq \rho_{j}^{N_{j}}$ then there exists $m_1,m_2\in\mathbb{N}$ with $m_1,m_2\geq N_{j}$ such that $\rho_{j}^{m_1}\leq t<\rho_{j}^{m_{1}+1}$ and $\rho_{j}^{m_2}\leq s<\rho_{j}^{m_2+1}$, from which we find that
	\begin{align*}
		\|e(A_s-A_t)e\|_\infty
		\leq&\left\|e\left(A_s-\frac{1}{\rho_{j}}A_{\rho_{j}^{m_2}}\right)e\right\|_\infty+\left\|e\left(\frac{1}{\rho_{j}}A_{\rho_{j}^{m_2}}-\rho_{j} A_{\rho_{j}^{m_2}}\right)e\right\|_\infty \\
		&+\rho_j\|e(A_{\rho_{j}^{m_2}}- A_{\rho_{j}^{m_1}})e\|_\infty+\|e(\rho_{j} A_{\rho_{j}^{m_1}}- A_t)e\|_\infty \\
		\leq&3\left(\rho_{j}-\frac{1}{\rho_{j}}\right)\|eye\|_\infty+\frac{\delta}{2}
	\end{align*}

	Since $\rho_{j}\to1$ as $j\to\infty$ (and so $0<\rho_{j}-\frac{1}{\rho_{j}}\to0$), there exists $J\in\mathbb{N}_0$ such that $j\geq J$ implies $\rho_j-\frac{1}{\rho_j}\leq\frac{\delta}{6(\|eye\|_\infty+1)}$. Therefore if $m,n\in\mathbb{N}$ are such that $m,n\geq[\rho_J^{N_{J}}]+1\geq\rho_J^{N_{J}}$, then
	$$\|e(A_m-A_n)e\|_\infty
	\leq3\left(\rho_J-\frac{1}{\rho_J}\right)\|eye\|_\infty+\frac{\delta}{2}
	\leq\frac{3\delta\|eye\|_\infty}{6(\|eye\|_\infty+1)}+\frac{\delta}{2}
	\leq\delta.
	$$
	Since $m,n\geq[\rho_J^{N_{J}}]+1\in\mathbb{N}$ and $\delta>0$ were arbitrary, it follows that $(eA_ne)_{n=1}^{\infty}$ is Cauchy, and so convergent, in $\mathcal{M}$. Since $\epsilon>0$ was arbitrary, it follows that $(A_n)_{n=1}^{\infty}$ converges b.a.u. as $n\to\infty$. 
	
	Since a subsequence of $(A_{[\rho_0^m]})_{m=1}^{\infty}$ is a subsequence of $(A_n)_{n=1}^{\infty}$ (namely, one may need to remove a few terms from $(A_{[\rho_0^m]})_{m=1}^{\infty}$ in case $[\rho_{0}^m]=[\rho_{0}^{m+1}]$ for some $m\in\mathbb{N}$), and since $(A_{[\rho_0^m]})_{m=1}^{\infty}$ has the same b.a.u. limit as $(A_{\rho_0^m})_{m=1}^{\infty}$ since $$A_{[\rho_0^m]}=\frac{[\rho_0^m]}{\rho_0^m}A_{\rho_0^m}$$ and $\frac{[\rho_0^m]}{\rho_0^m}\to1$ as $m\to\infty$, it follows that the b.a.u. limit of $(A_n)_{n=1}^{\infty}$ is $y$ as well.
\end{proof}

\begin{rem}
As was stated before the proof, the assumption and conclusion\\

\textit{
``Let $(\rho_j)_{k=0}^{\infty}\subset(1,\infty)$ be a sequence such that $\rho_j\to1$ as $k\to\infty$ and such that $\{\rho_j^m:m\in\mathbb{N}\}\subseteq\{\rho_\ell^m:m\in\mathbb{N}\}$ whenever $j\leq\ell$. Assume further that for every $j\in\mathbb{N}_0$ there exists $y_{j}\in L_0(\mathcal{M},\tau)^{+}$ such that $A_{\rho_j^m}\to y_{j}$ b.a.u. as $m\to\infty$.''
} and ``Then $y_j=y_{\ell}$ for every $j,\ell\in\mathbb{N}_0$,''\\

\noindent in the previous result are usually written in the forms \\

\textit{``For every $\rho>1$ assume there exists $y_{\rho}\in L_0(\mathcal{M},\tau)$ such that $A_{\rho^m}\to y_{\rho}$ as $m\to\infty$.'' and ``If $y_{\rho}$ denotes the b.a.u. limit for each $\rho>1$, then $y_{\rho}=y_{\sigma}$ for every $\rho,\sigma>1$.''}\\

\noindent These are different (the latter implies the former). We use the stated version mostly for technical reasons relating to a step of the proof of a result regarding i.i.d. sequences later in this article. A similar modification can be made in Theorem \ref{ConvergenceCriteriaI} below as well.
\end{rem}

The following is one of the main results of the paper, which provides a general condition for one to check to determine whether b.a.u. convergence of weighted averages occurs to the von Neumann algebra setting. In particular, it allows one to conclude b.a.u. convergence of noncommutative weighted averages directly from some growth properties about the weights, which has been studied frequently in classical ergodic theory, which will be seen afterwards.

\begin{teo}\label{ConvergenceCriteriaI}
	Let $(\mathcal{M},\tau)$ be a semifinite von Neumann algebra, $T\in DS^+(\mathcal{M},\tau)$, $1\leq p<\infty$, and let $\alpha=(\alpha_k)_{k=0}^{\infty}\in W_\infty$. For every $t\geq1$ define the polynomial $\widehat{M}_{t}^{\alpha}\in\mathbb{C}[\lambda]$ by
	$$
	\widehat{M}_t^{\alpha}(\lambda):=\frac{1}{t}\sum_{k=0}^{[t]-1}\alpha_k\lambda^k, \ \lambda\in\mathbb{T}.$$

	Let $(\rho_j)_{j=0}^{\infty}\subset(1,\infty)$ be a sequence such that $\rho_j\to1$ as $j\to\infty$ and such that $\{\rho_j^m:m\in\mathbb{N}\}\subseteq\{\rho_\ell^m:m\in\mathbb{N}\}$ whenever $j\leq\ell$. Assume further that
	\begin{align}\label{ConvergenceCriteriaEquation}\sum_{m=1}^{\infty}\sup_{\lambda\in\mathbb{T}}\left|\widehat{M}_{\rho_j^m}^{\alpha}(\lambda)\right|^2<\infty\end{align} for every $j\in\mathbb{N}_0$.
	
	Then $(M_n^{\alpha}(T)(x))_{n=1}^{\infty}$ converges b.a.u. as $n\to\infty$ for every $x\in L_p(\mathcal{M},\tau)$, and the limit operator is $0$ if either $1<p<\infty$ or $p=1$ and $\tau$ is finite.
\end{teo}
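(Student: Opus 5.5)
The plan is to prove everything on the dense subspace $L_1\cap\mathcal{M}$ (contained in $L_2$), using Hilbert space methods along the lacunary sequences $\rho_j^m$, and then extend to $L_p$ using the closedness part of Theorem \ref{WeightedErgodicTheoryResults} with $q=\infty$ and $\mathcal{W}=\{\alpha\}$.

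\textbf{Step 1 (spectral estimate on $L_2$).} The restriction of $T$ to $L_2(\mathcal{M},\tau)$ is a contraction. By the Sz.-Nagy dilation theorem and von Neumann's inequality, for the polynomial $P(\lambda)=\frac{1}{t}\sum_{k=0}^{[t]-1}\alpha_k\lambda^k=\widehat{M}_t^{\alpha}(\lambda)$ we get
$$\|M_t^{\alpha}(T)(x)\|_2=\|P(T)x\|_2\leq\sup_{\lambda\in\mathbb{T}}|\widehat{M}_t^{\alpha}(\lambda)|\,\|x\|_2 .$$
Put $z_m:=|M_{\rho_j^m}^{\alpha}(T)(x)|^2\in L_1^+$. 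Then $\|z_m\|_1=\|M^{\alpha}_{\rho_j^m}(T)(x)\|_2^2$, and (\ref{ConvergenceCriteriaEquation}) gives $\sum_m\|z_m\|_1<\infty$. Hence $\sum_m z_m$ converges in $L_1$, and Lemma \ref{SummableImpliesBAUconvergent} gives $z_m\to0$ b.a.u. Since $\|ez_me\|_\infty=\|M^{\alpha}_{\rho_j^m}(T)(x)e\|_\infty^2$, we get $M^{\alpha}_{\rho_j^m}(T)(x)\to0$ a.u., hence b.a.u., as $m\to\infty$. Choosing projections with $\tau(e_j^\perp)\leq\epsilon2^{-j-1}$ and taking $e=\bigwedge_j e_j$, one projection works for all $j$ simultaneously. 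We also intersect $e$ with a projection on which the unweighted averages of $x$ converge uniformly (Theorem \ref{NonCommDunSchErgThm}).

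\textbf{Step 2 (filling the gaps).} Lemma \ref{SubsequenceArgument} needs positive summands, while the $\alpha_k$ are complex. So I would adapt its sandwich argument directly rather than quote it. By linearity it suffices to treat $x\in(L_1\cap\mathcal{M})^+$. Write $\alpha=\sum_{i=1}^4 c_i\beta^{(i)}$ with $|c_i|=1$ and $0\leq\beta^{(i)}_k\leq|\alpha|_{W_\infty}$, using the positive and negative parts of the real and imaginary parts. For $\rho_j^m\leq t<\rho_j^{m+1}$,
$$M_t^{\alpha}(T)(x)-\tfrac{\rho_j^m}{t}M_{\rho_j^m}^{\alpha}(T)(x)=\tfrac1t\sum_{k=[\rho_j^m]}^{[t]-1}\alpha_kT^k(x).$$
Each piece $\frac1t\sum\beta^{(i)}_kT^k(x)$ is positive and dominated by $|\alpha|_{W_\infty}\big(A_{\rho_j^{m+1}}\rho_j-A_{\rho_j^m}\big)$, where $A_s=M_s(T)(x)$ is the unweighted average. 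Compressing by $e$ and using the order-norm monotonicity as in Lemma \ref{SubsequenceArgument} gives
$$\limsup_{t\to\infty}\|eM_t^{\alpha}(T)(x)e\|_\infty\leq 4|\alpha|_{W_\infty}(\rho_j-1)\|eF_T(x)e\|_\infty .$$
Letting $j\to\infty$ shows $eM_n^{\alpha}(T)(x)e\to0$ in norm, so $x\in bWW_p^T(\{\alpha\})$ with limit $0$.

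\textbf{Step 3 (extension).} Since $L_1\cap\mathcal{M}$ is dense in $L_p$ for $1\leq p<\infty$, closedness of $bWW_p^T(\{\alpha\})$ (Theorem \ref{WeightedErgodicTheoryResults}) yields b.a.u. convergence for all $x\in L_p$. To identify the limit as $0$, I would combine the maximal inequality of Theorem \ref{WeightedErgodicTheoryResults} with the fact that the limit vanishes on the dense set. For $1<p<\infty$ I would use Hölder with $q$ and $\|\alpha\|_{W_q}\leq\|\alpha\|_{W_\infty}$. For $p=1$ with $\tau$ finite I would use that $\mathcal{M}\subset L_1$ is dense and that the limit map is continuous into $L_0$.

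\textbf{Main obstacle.} I expect the delicate step to be Step 2. Lemma \ref{SubsequenceArgument} does not apply verbatim because the weights are complex and the lacunary limits are only known for the weighted averages. One must check that the compressed order inequalities really control the norms, and that a single projection $e$ handles every $j$ as well as the unweighted averages.
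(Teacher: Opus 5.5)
Your proof is correct, but it follows a different route from the paper's in two places.

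\textbf{Gap-filling.} The paper first takes $\alpha$ real and $x\in L_2^+$. It applies Lemma \ref{SubsequenceArgument} to the positive sequence $(|\alpha|_{W_\infty}+\alpha_k)T^k(x)$ and then subtracts $|\alpha|_{W_\infty}M_n(T)(x)$. It must then check separately that the real and imaginary parts of a complex $\alpha$ still satisfy (\ref{ConvergenceCriteriaEquation}). You keep $\alpha$ complex throughout, since von Neumann's inequality does not need real coefficients, and split $\alpha$ only inside the gap terms. This avoids both Lemma \ref{SubsequenceArgument} and the reduction of the hypothesis.

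Because you restrict to $x\in L_1\cap\mathcal{M}$, even the splitting is more than you need. For $\rho_j^m\le t<\rho_j^{m+1}$ one has directly
\[
\Big\|M_t^{\alpha}(T)(x)-\tfrac{\rho_j^m}{t}M_{\rho_j^m}^{\alpha}(T)(x)\Big\|_\infty\le\big(1-\rho_j^{-1}+\rho_j^{-m}\big)|\alpha|_{W_\infty}\|x\|_\infty .
\]
So positivity, the unweighted ergodic theorem and the order argument are unnecessary, and the ``main obstacle'' you anticipate disappears. The order-theoretic lemma is forced on the paper only because it works with unbounded $x\in L_2^+$.

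\textbf{Identifying the limit.} The paper shows that $\alpha$ is Hartman, using rotations of $\mathbb{T}$. For $1<p<\infty$ it then gets $L_p$-norm convergence from the mean ergodic theorem for Hartman weights on reflexive spaces. For $p=1$ it uses $\tau(\textbf{1})<\infty$ and the noncommutative bounded convergence theorem. Your Banach-principle argument via the maximal inequality of Theorem \ref{WeightedErgodicTheoryResults} is shorter and self-contained.

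The one point you should make explicit is the following. Suppose that for every $\epsilon,\delta>0$ there is $e$ with $\tau(e^\perp)\le\epsilon$ and $\|eL(x)e\|_\infty\le\delta$. Then $L(x)=0$. To see this, take such $e_k$ with $\tau(e_k^\perp)\to0$ and $\delta_k\to0$. Then $e_k\to\textbf{1}$ in measure, so $e_kL(x)e_k\to L(x)$ in measure by joint continuity of multiplication. But also $e_kL(x)e_k\to0$ in norm, hence in measure.

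Your argument never uses finiteness of $\tau$ when $p=1$. The set $L_1\cap\mathcal{M}$ is already dense in $L_1$, and the weak-type inequality with $p=1$, $q=\infty$ holds in every semifinite algebra. So your route actually gives limit $0$ for every $1\le p<\infty$, which is stronger than the theorem as stated.
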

\begin{proof}
	First assume that $\alpha_k\in\mathbb{R}$ for every $k\in\mathbb{N}_0$. 
	
	Fix $x\in L_2^+$. Since $T$ is a contraction of $L_2(\mathcal{M},\tau)$, by applying von Neumann's Inequality for Hilbert space contractions and the Maximum Modulus Principle one finds that
	$$\|M_t^{\alpha}(T)\|_{L_2\to L_2}
	\leq\sup_{\lambda\in\mathbb{T}}|\widehat{M}_t^{\alpha}(\lambda)|,
	$$ 
	for every $t\geq1$, where $\|\cdot\|_{L_2\to L_2}$ denotes the operator norm of an element of $\mathcal{B}(L_2(\mathcal{M},\tau))$.
	
	Fix $j\in\mathbb{N}_0$ and observe that the above implies
	$$\sum_{m=1}^{\infty}\|M_{\rho_j^m}^{\alpha}(T)(x)\|_2^2
	\leq\sum_{m=1}^{\infty}\|M_{\rho_j^m}^{\alpha}(T)\|_{L_2\to L_2}^2\|x\|_2^2
	\leq\|x\|_2^2\sum_{m=1}^{\infty}\sup_{\lambda\in\mathbb{T}}|\widehat{M}_{\rho_j^m}^{\alpha}(\lambda)|^2.$$
	Since the right-hand side of this inequality is finite by assumption, it follows that $\sum_{m=1}^{\infty}\|M_{\rho_j^m}^{\alpha}(T)(x)\|_2^2<\infty$ as well. Furthermore, since
	$$\sum_{m=1}^{\infty}\left\|M_{\rho_j^m}^{\alpha}(T)(x)\right\|_2^2
	=\sum_{m=1}^{\infty}\left\||M_{\rho_j^m}^{\alpha}(T)(x)|^2\right\|_1,$$ it follows that $\sum_{m=1}^{\infty}|M_{\rho_j^m}^{\alpha}(T)(x)|^2\in L_1^{+}$ (where the convergence is with respect to the norm $\|\cdot\|_1$) since $L_1$ is a Banach space, $L_0^+$ is closed in the measure topology, and $L_1^+\subset L_0^+$. Lemma \ref{SummableImpliesBAUconvergent} then implies that $|M_{\rho_j^m}^{\alpha}(T)(x)|^2\to0$ b.a.u. as $m\to\infty$.
	
	By the polar decomposition of operators in $L_0(\mathcal{M},\tau)$, for every $m\in\mathbb{N}_0$ there exists $u_m\in\mathcal{M}$ with $\|u_m\|_\infty\leq1$ such that $M_{\rho_j^m}^{\alpha}(T)(x)=u_m|M_{\rho_j^m}^{\alpha}(T)(x)|$. Assume $\epsilon>0$ and let $e\in\mathcal{P(M)}$ be such that $\tau(e^\perp)\leq\epsilon$ and $\|e|M_{\rho_j^m}^{\alpha}(T)(x)|^2e\|_\infty\to0$ as $n\to\infty$. Then
	\begin{align*}
		\left\|M_{\rho_j^m}^{\alpha}(T)(x)e\right\|_\infty 
		=&\left\|u_m|M_{\rho_j^m}^{\alpha}(T)(x)|e\right\|_\infty \\
		\leq&\|u_m\|_\infty\left\|\left(|M_{\rho_j^m}^{\alpha}(T)(x)|e\right)^*|M_{\rho^m}^{\alpha}(T)(x)|e\right\|_\infty^{1/2} \\
		\leq&\left\|e|M_{\rho_j^m}^{\alpha}(T)(x)|^2e\right\|_\infty^{1/2}\to0
	\end{align*} as $m\to\infty$. Since $\epsilon>0$ was arbitrary, it follows that $M_{\rho_j^m}^{\alpha}(T)(x)\to0$ a.u., and so b.a.u., as $m\to\infty$. 
	
	Since $\alpha\subset\mathbb{R}$ is bounded, it follows that $|\alpha|_{W_\infty}+\alpha_k\geq0$ for every $k\in\mathbb{N}_0$. Defining $x_k:=(|\alpha|_{W_\infty}+\alpha_k)T^k(x)\in L_2^{+}$ one finds that $$\frac{1}{\rho_j^m}\sum_{k=0}^{[\rho_j^m]-1}x_k=\frac{[\rho_j^m]}{\rho_j^m}\cdot|\alpha|_{W_\infty}M_{[\rho_j^m]}(T)(x)+M_{\rho_j^m}^{\alpha}(T)(x)$$
	for every $m\in\mathbb{N}$. Let $F_T(x)\in L_2^{+}$ denote the b.a.u. limit of $(M_n(T)(x))_{n=1}^{\infty}$, which exists by \cite[Theorem 6.3]{jx}. Then the first term in the above converges to $|\alpha|_{W_\infty}F_T(x)$ b.a.u. as $m\to\infty$ (since $\frac{[\rho_j^m]}{\rho_j^m}\to1$ as $m\to\infty$) and the second term converges to $0$ b.a.u. as $m\to\infty$ by the arguments above; hence, the left hand side converges to $|\alpha|_{W_\infty}F_T(x)$ b.a.u. as $m\to\infty$. 
	
	Since $j\in\mathbb{N}_0$ was arbitrary, it follows that $\frac{1}{n}\sum_{k=0}^{n}x_k\to|\alpha|_{W_\infty}F_T(x)$ b.a.u. as $n\to\infty$ by Lemma \ref{SubsequenceArgument}. 
	Since
	$$M_n^{\alpha}(T)(x)=\frac{1}{n}\sum_{k=0}^{n-1}x_k-|\alpha|_{W_\infty}M_n(T)(x)$$ for every $n\in\mathbb{N},$ and since both terms on the right side converge to $|\alpha|_{W_\infty}F_T(x)$ b.a.u. as $n\to\infty$, it follows that $M_n^{\alpha}(T)(x)\to|\alpha|_{W_\infty}F_T(x)-|\alpha|_{W_\infty}F_T(x)=0$ b.a.u. as $n\to\infty$. Since $x\in L_2^+$ was arbitrary and $\text{span}(L_2^{+})=L_2$, it follows that the b.a.u. convergence to $0$ occurs for every $x\in L_2$.
	
	To extend the convergence to $L_p(\mathcal{M},\tau)$ for any other $1\leq p<\infty$ one can apply \cite[Theorem 2.1]{cl1} and \cite[Theorem 2.1]{ob2} since $L_1\cap\mathcal{M}\subset L_2$ and $L_1\cap\mathcal{M}$ is also dense in $L_p$. 
	
	Now assume that $\alpha_k\in\mathbb{C}$ for every $k\in\mathbb{N}_0$, and write $\alpha_k=\beta_k+i\gamma_k$ for some $\beta_k,\gamma_k\in\mathbb{R}$ for every $k$. Letting $\beta=(\beta_k)_{k=0}^{\infty}$ and $\gamma=(\gamma_k)_{k=0}^{\infty}$, it follows that $\beta,\gamma\subset\mathbb{R}$ (by definition) and $\beta,\gamma\in W_\infty$ since $$|\beta_k|,|\gamma_k|\leq\sqrt{|\beta_k|^2+|\gamma_k|^2}=|\alpha_k|\leq|\alpha|_{W_\infty}\text{ for every }k\in\mathbb{N}_0.$$ If we define $\alpha^*:=(\overline{\alpha_k})_{k=0}^{\infty}$, then $\beta=\frac{1}{2}(\alpha+\alpha^*)$ and $\gamma=\frac{1}{2i}(\alpha-\alpha^*)$.
	
	Fix $t\geq1$ and observe that
	$$\sup_{\lambda\in\mathbb{T}}\left|\frac{1}{t}\sum_{k=0}^{[t]-1}\alpha_k\lambda^k\right|
	=\sup_{\lambda\in\mathbb{T}}\left|\overline{\frac{1}{t}\sum_{k=0}^{[t]-1}\alpha_k\lambda^k}\right|
	=\sup_{\lambda\in\mathbb{T}}\left|\frac{1}{t}\sum_{k=0}^{[t]-1}\overline{\alpha_k}\overline{\lambda}^k\right|
	=\sup_{\lambda\in\mathbb{T}}\left|\frac{1}{t}\sum_{k=0}^{[t]-1}\overline{\alpha_k}\lambda^k\right|$$ since $\lambda\in\mathbb{T}$ if and only if $\overline{\lambda}\in\mathbb{T}$. From this one can find that
	\begin{align*}
	\sup_{\lambda\in\mathbb{T}}\left|\widehat{M}_{t}^{\beta}(\lambda)\right|
	=&\sup_{\lambda\in\mathbb{T}}\left|\frac{1}{2}\left(\widehat{M}_{t}^{\alpha}(\lambda)+\widehat{M}_{t}^{\alpha^*}(\lambda)\right)\right|\\ 
	\leq&\frac{1}{2}\left(\sup_{\lambda\in\mathbb{T}}\left|\widehat{M}_{t}^{\alpha}(\lambda)\right|+\sup_{\lambda\in\mathbb{T}}\left|\widehat{M}_{t}^{\alpha^*}(\lambda)\right|\right) \\
	=&\sup_{\lambda\in\mathbb{T}}\left|\widehat{M}_{t}^{\alpha}(\lambda)\right|
	\end{align*}
	and similarly 
	$\sup_{\lambda\in\mathbb{T}}\left|\widehat{M}_{t}^{\gamma}(\lambda)\right|\leq\sup_{\lambda\in\mathbb{T}}\left|\widehat{M}_{t}^{\alpha}(\lambda)\right|$.
	Hence if the polynomials $\widehat{M}_{t}^{\alpha}$ satisfy Inequality (\ref{ConvergenceCriteriaEquation}), then $\widehat{M}_{t}^{\beta}$ and $\widehat{M}_{t}^{\gamma}$ will as well. Therefore the claim for $\alpha$ follows by applying the previous case to $\beta$ and $\gamma$ since $\beta,\gamma\subset\mathbb{R}$ and both are in $W_\infty$ and then using the facts that
	$$M_n^{\alpha}(T)(x)=M_n^{\beta}(T)(x)+iM_n^{\gamma}(T)(x)$$ for every $n\in\mathbb{N}$ and that the sum of b.a.u. convergent sequences is also b.a.u. convergent to the sum of the b.a.u. limits.
	
	The proof and Lemma \ref{SubsequenceArgument} show that the b.a.u. limit must be $0$ if $x\in L_2$. To show that this is the case for each other $L_p$, we will first show that $\alpha$ must be a Hartman sequence. To see this, note first that the semifinite von Neumann algebra in the above arguments was arbitrary. Fix $\lambda\in\mathbb{T}$ and let $m$ denote the Lebesgue measure on $\mathbb{T}$. Consider the semifinite von Neumann algebra $(L_\infty(\mathbb{T},m),\int_{\mathbb{T}}\cdot d\mu)$ acting on $L_2(\mathbb{T},m)$ via multiplier operators and consider $T_{\lambda}\in DS^+(L_\infty(\mathbb{T},m),\int_{\mathbb{T}}\cdot dm)$ defined by
	$$T_{\lambda}f(\rho):=f(\lambda\rho)$$ for $f\in L_1(\mathbb{T},m)$ and $\rho\in\mathbb{T}$. Letting $\text{id}_{\mathbb{T}}(\rho)=\rho$ for $\rho\in\mathbb{T}$, it follows that $\text{id}_{\mathbb{T}}\in L_\infty(\mathbb{T},m)\subset L_2(\mathbb{T},m)$, meaning for $m$-a.e. $\rho\in\mathbb{T}$ we have by the above that
	$$0=\lim_{n\to\infty}\frac{1}{n}\sum_{k=0}^{n-1}\alpha_k (T_{\lambda}^k(\text{id}_{\mathbb{T}}))(\rho)=\lim_{n\to\infty}\frac{1}{n}\sum_{k=0}^{n-1}\alpha_k \text{id}_{\mathbb{T}}(\lambda^k\rho)=\rho\lim_{n\to\infty}\frac{1}{n}\sum_{k=0}^{n-1}\alpha_k\lambda^k,$$ implying $\lim_{n\to\infty}\frac{1}{n}\sum_{k=0}^{n-1}\alpha_k\lambda^k=0$ since the limit exists for some $\rho\neq0$. Since $\lambda\in\mathbb{T}$ was arbitrary it follows that $\alpha$ is a Hartman sequence.
	
	Fix $1<p<\infty$. Since $\alpha$ is a Hartman sequence, $L_p$ is reflexive, and $T$ is power bounded, it follows by \cite[Theorem 1.2]{lot} that the limit
	$$L(T,\alpha)(x):=\|\cdot\|_p-\lim_{n\to\infty}\frac{1}{n}\sum_{k=0}^{n-1}\alpha_kT^k(x)$$ exists for every $x\in L_p$, and so a well-known corollary to the Uniform Boundedness Principle implies that $L(T,\alpha)$ defines a continuous linear operator from $L_p$ to itself. Since $M_n^{\alpha}(T)(z)\to0$ b.a.u. as $n\to\infty$ for every $z\in L_1\cap\mathcal{M}$, and since b.a.u. limits and norm limits agree since both imply convergence in measure in $L_0$ (which is Hausdorff with this topology), it follows that $L(T,\alpha)(z)=0$ for all such $z$. Since $L_1\cap\mathcal{M}$ is dense in $L_p$, and since $L(T,\alpha)$ is continuous on $L_p$, it follows that $L(T,\alpha)(x)=0$ for every $x\in L_p(\mathcal{M},\tau)$ since it is the unique continuous linear extension of the map $L(T,\alpha):L_1\cap\mathcal{M}\to L_p$. Therefore the b.a.u. limit of $(M_n^{\alpha}(T)(x))_{n=1}^{\infty}$ must be $0$ as well since it must agree with the norm limit $L(T,\alpha)(x)=0$.  
	
	For the case $p=1$ we need to assume in addition that $\tau(\textbf{1})<\infty$. The proof for this case will follow similar reasoning as the previous case once convergence of the weighted averages with respect to $\|\cdot\|_1$ has been established.
	
	Fix $z\in\mathcal{M}$. As before one finds that the b.a.u. limit of $(M_n^{\alpha}(T)(z))_{n=1}^{\infty}$ must be $0$ since $\mathcal{M}$ is dense in $L_2$ when $\tau$ is finite. Since $\sup_{n\in\mathbb{N}}\|M_n^{\alpha}(T)(z)\|_\infty\leq|\alpha|_{W_\infty}\|z\|_\infty$, it follows by the noncommutative Bounded Convergence Theorem in \cite[Proposition 4.2]{comli3} that $\|M_n^{\alpha}(T)(z)\|_1\to0$ as $n\to\infty$. Since $\sup_{n\in\mathbb{N}}\|M_n^{\alpha}(T)\|_{L_1\to L_1}\leq|\alpha|_{W_\infty}<\infty$, it follows by \cite[Lemma 2.3]{comli1} that $$\mathcal{C}:=\{x\in L_1:(M_n^{\alpha}(T)(x))_{n=1}^{\infty}\text{ converges to with respect to }\|\cdot\|_1\}$$ is closed in $L_1$. Since we have shown that $\mathcal{M}\subset\mathcal{C}$, it follows that $\mathcal{C}=L_1$ since $\mathcal{C}$ is then both dense and closed in $L_1$, and this means that $(M_n^{\alpha}(T)(x))_{n=1}^{\infty}$ converges with respect to $\|\cdot\|_1$ for every $x\in L_1$. The rest of the proof follows as in the case where $1<p<\infty$.
\end{proof}

\begin{rem}\label{ReIndexing}
The polynomial $\widehat{M}_t^{\alpha}$ sometimes take other forms in the literature. For example, other sources may define it as either
$$\frac{1}{t}\sum_{k=1}^{[t]}\alpha_k\lambda^k \ \text{ or } \ \frac{1}{t}\sum_{k=0}^{[t]}\alpha_k\lambda^k\text{ for }t\geq1\text{ and }\lambda\in\mathbb{C}.$$ However, if $t\geq1$ and $\lambda\in\mathbb{T}$ then since
\begin{align*}
\left|\frac{1}{t}\sum_{k=0}^{[t]-1}\alpha_k\lambda^k\right|&\leq\left|\frac{1}{t}\sum_{k=1}^{[t]}\alpha_k\lambda^k\right|+\frac{|\alpha_0\lambda^{0}|+|\alpha_{[t]}\lambda^{[t]}|}{t}\leq\left|\frac{1}{t}\sum_{k=1}^{[t]}\alpha_k\lambda^k\right|+\frac{2|\alpha|_{W_\infty}}{t}\text{ and }\\
\left|\frac{1}{t}\sum_{k=0}^{[t]-1}\alpha_k\lambda^k\right|&\leq\left|\frac{1}{t}\sum_{k=0}^{[t]}\alpha_k\lambda^k\right|+\frac{|\alpha_{[t]}\lambda^{[t]}|}{t}\leq\left|\frac{1}{t}\sum_{k=0}^{[t]}\alpha_k\lambda^k\right|+\frac{|\alpha|_{W_\infty}}{t}
\end{align*}
one can easily see that if Inequality (\ref{ConvergenceCriteriaEquation}) holds for either one of these alternative definitions then the original assumption stated for our definition of $\widehat{M}_t^{\alpha}$ holds as well (H\"{o}lder's inequality on $\ell_2(\mathbb{N}_0)$ may be needed when squaring the inequalities).

Such sources may change the index set and work with sequences of the form $(\beta_j)_{j=1}^{\infty}$ instead of $(\alpha_k)_{k=0}^{\infty}$ (like in the first case above). This also will not cause issues in the above since if we use $\alpha_k=\beta_{k+1}$ for every $k\in\mathbb{N}_0$ then for every $\lambda\in\mathbb{T}$ it follows that
$$\left|\frac{1}{t}\sum_{k=0}^{[t]-1}\alpha_k\lambda^k\right|
=|\lambda|\left|\frac{1}{t}\sum_{k=0}^{[t]-1}\alpha_k\lambda^k\right|
=\left|\frac{1}{t}\sum_{k=0}^{[t]-1}\alpha_k\lambda^{k+1}\right|
=\left|\frac{1}{t}\sum_{j=1}^{[t]}\beta_j\lambda^j\right|.
$$
\end{rem}

\begin{rem}
If a variant of Lemma \ref{SubsequenceArgument} could be proven for a.u. convergence, then a.u. convergence would hold in Theorem \ref{ConvergenceCriteriaI} whenever $p\geq 2$. However, since the fact that $0\leq a\leq b$ in $L_0(\mathcal{M},\tau)^+$ implies $0\leq eae\leq ebe$ and $0\leq e(b-a)e\leq\|e(b-a)e\|_\infty\textbf{1}$ in $\mathcal{M}^{+}$ for appropriately chosen $e\in\mathcal{P(M)}$ plays an important role in the proof of Lemma \ref{SubsequenceArgument}, a very different approach would likely be needed.
\end{rem}

\subsection{Applications}
A number of weighted ergodic theorems in classical ergodic theory are proven by showing that the Inequality (\ref{ConvergenceCriteriaEquation}) holds for the weight sequence $\alpha$. From the list in the introduction, some sequences that are now good weights in the von Neumann algebra setting include:
\begin{itemize}
\item[(iii)] special cases of quadratic polynomial powers of unit circle elements \cite[Proposition 8]{assani1},
\item[(v)] special cases of dynamically generated sequences \cite[Theorem 4]{assani1},
\item[(vi)] ones generated by a bounded i.i.d. sequence \cite[Theorem 2]{assani1},
\item[(vii)] one generated by the M\"{o}bius function $\mu$ \cite[Equation (1)]{davenport}, \cite[Proposition 3.1]{eakpldlr}, \cite[Lemma 1]{BatemanChowla}, 
\item[(viii)] one generated by the Lioville function $\lambda$ \cite[Lemma 1]{BatemanChowla}, 
\item[(ix)] a totally-balanced $k$-automatic sequence \cite[Proposition 8.1]{ek} or an invertible $k$-automatic sequence \cite[Propositions 9.2 and 9.3]{ek}, 
\item[(x)] certain types of random $q$-multiplicative sequences \cite[Theorems 5 and 6]{fan}. 
\end{itemize}
The class of (unbounded) Besicovitch sequences considered in \cite[Theorem 1.12]{demeterjones} which does not satisfying the usual duality assumptions can also be shown to be good weights by following similar reasoning.

For completeness sake we wish for the details of the application of the method for at least one class of weights that use this method to appear in this article. Lacking a new one ourselves, we will show that i.i.d. sequences are good weights in the sense above using results from \cite{assani1} to expedite the process. One reason for this particular choice is that this family and the M\"{o}bius functions were the main examples that motivated us to look in to this problem.

Another reason for this particular choice of sequence is due to some technical points about a few proofs of the argument for them that are not usually explicitly mentioned in the literature that we feel should be acknowledged somewhere (see Remark \ref{WhyOnlySubsequenceAndNotAllReasoning}). The final reason is that we will show that the boundedness of the i.i.d. sequence can be weakened using almost the exact same argument as in the commutative setting (with the standard H\"{o}lder duality). This which would give another good class of unbounded weights in the von Neumann algebra setting (after the ones considered in \cite{ob2,ob4}), extending case (xiii) in addition to those mentioned above; our proof mirrors the commutative version's proof in this case as well. 

We should also note that the ideas of this general method has a counterpart in the commutative setting as seen in \cite[Proposition 3.1]{eakpldlr} (where it was stated and proved for the M\"{o}bius function) and \cite[Corollaries 1 and 2]{fan} (for more general weights).

For the rest of this section we will let $\mathcal{B}_{\mathbb{C}}$ denote the Borel $\sigma$-algebra of $\mathbb{C}$ when it is equipped with its standard metric topology.

\begin{df}
	Let $(\Omega,\mathcal{F},\mu)$ be a probability space. Let $Y_k:\Omega\to\mathbb{C}$ be a measurable function for every $k\in\mathbb{N}_0$. Then the sequence $Y=(Y_k)_{k=0}^{\infty}$ is called an \textit{independent identically distributed sequence} (abbreviated as i.i.d. sequence) on $(\Omega,\mathcal{F},\mu)$ if it satisfies:
	\begin{itemize}
		\item[(i)] (Independent) $\mu\left(\bigcap_{j=1}^{m}Y_{k_j}^{-1}(E_{j})\right)=\prod_{j=1}^{m}\mu(Y_{k_j}^{-1}(E_{j}))$ for every $E_1,...,E_m\in\mathcal{B}_{\mathbb{C}}$, every $k_1,...,k_m\in\mathbb{N}_0$, and every $m\in\mathbb{N}$.
		\item[(ii)] (Identically Distributed) $\mu\circ Y_k^{-1}=\mu\circ Y_j^{-1}$ as measures on $(\mathbb{C},\mathcal{B}_{\mathbb{C}})$ for every $k,j\in\mathbb{N}_0$.
	\end{itemize}
	For every $\omega\in\Omega$ we will write $Y(\omega):=(Y_k(\omega))_{k=0}^{\infty}\subset\mathbb{C}$. 
\end{df}
In probability theory it is common to write $\mathbb{E}(Y_k)$ for the integral $\int_{\Omega}Y_k(\omega)d\mu(\omega)$. For such sequences it is known that $\mathbb{E}(f(Y_k))=\mathbb{E}(f(Y_0))$ for every $k\in\mathbb{N}_0$ and every Borel-measurable function $f:\mathbb{C}\to\mathbb{C}$ whenever the expectation is well-defined. In particular, this holds if $f(t)=t+c$ for some fixed $c\in\mathbb{C}$, $f(t)=|t|^q$ for some fixed $q\in[1,\infty)$, and $f(t)=\chi_{E}(t)t$ for some fixed $E\in\mathcal{F}$.

If $\|Y_0\|_\infty<\infty$ then the fact that $Y$ is identically distributed implies $Y(\omega)\in W_\infty$ and $\|Y(\omega)\|_{W_\infty}\leq\|Y_0\|_\infty$ for $\mu$-a.e. $\omega\in\Omega$. Similarly, if $\mathbb{E}(|Y_0|^q)<\infty$ for some $1\leq q<\infty$ then the i.i.d. assumption and the strong law of large numbers implies that $Y(\omega)\in W_q$ and $\|Y(\omega)\|_{W_q}=\mathbb{E}(|Y_0|^1)^{1/q}$ for $\mu$-a.e. $\omega\in\Omega$.

If $f:\mathbb{C}\to\mathbb{C}$ is a Borel measurable function and $Y=(Y_k)_{k=0}^{\infty}$ is an i.i.d. sequence on $(\Omega,\mathcal{F},\mu)$, then one finds that $(f\circ Y_k)_{k=0}^{\infty}$ is an i.i.d. sequence on $(\Omega,\mathcal{F},\mu)$ as well; we will denote this sequence by $f\circ Y$. Importantly, if $f$ is bounded then $\|f\circ Y_0\|_\infty<\infty$ even if $Y_0$ is not bounded.

\begin{teo}\label{BoundedIID}
Let $(\Omega,\mathcal{F},\mu)$ be a probability space and $Y=(Y_k)_{k=0}^{\infty}$ be a sequence of bounded i.i.d. random variables on $(\Omega,\mathcal{F},\mu)$. Then there exists a set $\Omega^{\prime}\in\mathcal{F}$ with $\mu(\Omega^{\prime})=1$ such that for every $\omega\in\Omega^{\prime}$: if $(\mathcal{M},\tau)$ is a semifinite von Neumann algebra, $T\in DS^+(\mathcal{M},\tau)$, and $1\leq p<\infty$, then the weighted averages
$$M_n^{Y(\omega)}(T)(x):=\frac{1}{n}\sum_{k=0}^{n-1}Y_k(\omega)T^k(x)$$ converge b.a.u. as $n\to\infty$ for every $x\in L_p(\mathcal{M},\tau)$. 

Furthermore, if $x\in L_p(\mathcal{M},\tau)$ and if $F_T(x)\in L_p(\mathcal{M},\tau)$ denotes the b.a.u. limit of the unweighted averages $(M_n(T)(x))_{n=1}^{\infty}$, then the b.a.u. limit of $(M_n^{Y(\omega)}(T)(x))_{n=1}^{\infty}$ is equal to $\mathbb{E}(Y_0)F_T(x)$ if $p>1$ or if $p=1$ and $\tau$ is finite.

\end{teo}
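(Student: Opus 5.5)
Center the sequence and feed it to Theorem \ref{ConvergenceCriteriaI}. Write $c:=\mathbb{E}(Y_0)$ and $Z_k:=Y_k-c$. Then $Z=(Z_k)_{k=0}^{\infty}$ is a bounded i.i.d. sequence with $\mathbb{E}(Z_0)=0$ and $\|Z_0\|_\infty\leq 2\|Y_0\|_\infty$. For every $\omega$ we have
$$M_n^{Y(\omega)}(T)(x)=M_n^{Z(\omega)}(T)(x)+c\,M_n(T)(x).$$
By Theorem \ref{NonCommDunSchErgThm}, the second term converges b.a.u. to $cF_T(x)$. So it suffices to find a full-measure set $\Omega'$, independent of $(\mathcal{M},\tau)$, $T$, $p$ and $x$, such that $Z(\omega)$ satisfies the hypotheses of Theorem \ref{ConvergenceCriteriaI} for every $\omega\in\Omega'$. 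That theorem then gives b.a.u. convergence of $M_n^{Z(\omega)}(T)(x)$ for every $x\in L_p$, with limit $0$ when $p>1$, or when $p=1$ and $\tau$ is finite. This yields both the convergence claim and the identification of the limit as $\mathbb{E}(Y_0)F_T(x)$.

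For the sequence $(\rho_j)$ I would take $\rho_j:=2^{1/2^j}$. Then $\rho_j\to1$, and $\rho_j^m=\rho_\ell^{m2^{\ell-j}}$, so $\{\rho_j^m\}\subseteq\{\rho_\ell^m\}$ whenever $j\leq\ell$. Because there are only countably many $j$, it is enough to produce, for each fixed $j$, a full-measure set $\Omega_j\in\mathcal{F}$ on which
$$\sum_{m=1}^{\infty}\sup_{\lambda\in\mathbb{T}}\Big|\widehat{M}^{Z(\omega)}_{\rho_j^m}(\lambda)\Big|^2<\infty,$$
and then set $\Omega'=\bigcap_j\Omega_j$. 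This countable intersection is exactly why Lemma \ref{SubsequenceArgument} and Theorem \ref{ConvergenceCriteriaI} were stated for a countable family of $\rho_j$ rather than for all $\rho>1$: taking the intersection over uncountably many $\rho$ would destroy measurability and full measure. To make sure each $\Omega_j$ lies in $\mathcal{F}$, note that the sup over $\mathbb{T}$ of a polynomial equals the sup over the countable set of roots of unity, so each summand is a measurable function of $\omega$.

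The main step, and the expected obstacle, is the maximal estimate for random trigonometric polynomials with bounded, mean-zero, independent coefficients. I would invoke the Salem–Zygmund / Kahane bound in the form used in \cite[Theorem 2]{assani1}:
$$\mathbb{E}\Big(\sup_{\lambda\in\mathbb{T}}\Big|\sum_{k=0}^{N-1}Z_k\lambda^k\Big|^2\Big)\leq C\,N\log N .$$
To prove it directly, first use Bernstein's inequality to reduce the sup over $\mathbb{T}$ to a sup over about $N^2$ roots of unity. Then apply Hoeffding's subgaussian tail bound at each point, and take a union bound. With $N=[\rho_j^m]$ this gives
$$\mathbb{E}\sup_{\lambda\in\mathbb{T}}\big|\widehat{M}^{Z}_{\rho_j^m}(\lambda)\big|^2\leq C'\,\frac{m\log\rho_j}{\rho_j^{m}} .$$
This is summable in $m$. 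By monotone convergence, the random series has finite expectation and is therefore finite almost surely, which gives $\Omega_j$. Care is needed to keep the constants uniform in $m$ and to handle $[t]$ versus $t$ (Remark \ref{ReIndexing}). If $Y$ is complex-valued, I would apply the estimate to the real and imaginary parts of $Z$ separately, or work directly with complex Hoeffding.
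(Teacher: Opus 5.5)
Your proposal is correct and follows the paper's proof essentially step for step. You center via $Z_k=Y_k-\mathbb{E}(Y_0)$, take $\rho_j=2^{1/2^j}$, and bound the expectation of $\sum_m\sup_{\lambda\in\mathbb{T}}|\widehat{M}^{Z(\omega)}_{\rho_j^m}(\lambda)|^2$ to get full-measure sets. You then intersect countably many of them and apply Theorem \ref{ConvergenceCriteriaI} together with $M_n^{Y(\omega)}(T)(x)=M_n^{Z(\omega)}(T)(x)+\mathbb{E}(Y_0)M_n(T)(x)$.

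The only real difference is the random-polynomial estimate. The paper cites the second-moment bound $\int_\Omega\sup_{\lambda\in\mathbb{T}}|\frac{1}{n}\sum_{k=0}^{n-1}Z_k\lambda^k|^2\,d\mu\leq(1+\sqrt{2})\mathbb{E}(|Z_0|^2)/\sqrt{n}$ from \cite[Theorem 1]{assani1}. You instead use the sharper Salem--Zygmund bound via Bernstein, Hoeffding and a union bound, which uses boundedness; write it as $O(N(1+\log N))$ so it is not $0$ at $N=1$. Either decay rate is summable along $[\rho_j^m]$.
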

\begin{proof}
Since $Re,Im:\mathbb{C}\to\mathbb{R}$ are both continuous functions, and since $Y_k=Re(Y_k)+iIm(Y_k)$ for every $k\in\mathbb{N}_0$, it suffices to prove the claim for $Re\circ Y$ and $Im\circ Y$ (both of which will be bounded i.i.d. sequences on $(\Omega,\mathcal{F},\mu)$ since $Y$ is), as the claim for general $Y$ will then follow from the linearity of b.a.u. limits.

Let $f:\mathbb{R}\to\mathbb{R}$ be given by $f(y)=y-\mathbb{E}(Y_0)$, noting that $f$ is continuous, and write $Z_k=f\circ Y_k$ and $Z=(Z_k)_{k=0}^{\infty}$. Since $Y$ is an i.i.d. sequence on $(\Omega,\mathcal{F},\mu)$, it follows by the above that $f\circ Y=Z$ is as well. Furthermore, since $\mathbb{E}(Y_k)=\mathbb{E}(Y_0)$ for all $k\in\mathbb{N}_0$ from $Y$ being an i.i.d. sequence, it follows that $\mathbb{E}(Z_k)=\mathbb{E}(Y_k-\mathbb{E}(Y_0))=\mathbb{E}(Y_k)-\mathbb{E}(Y_0)=0$ for all $k\in\mathbb{N}_0$. Since $|Y_0(\omega)|\leq\|Y_0\|_\infty$, it follows that $|Z_0(\omega)|=|(f\circ Y_0)(\omega)|=|Y_0(\omega)-\mathbb{E}(Y_0)|\leq2\|Y_0\|_\infty$ $\mu$-a.e.; hence $Z=(Z_k)_{k=0}^{\infty}$ is a bounded i.i.d. on $(\Omega,\mathcal{F},\mu)$ with $\mathbb{E}(Z_0)=0$. Furthermore, being a bounded i.i.d. sequence on a probability space implies that $\mathbb{E}(|Z_0|^2)\leq4\|Y_0\|_\infty^2$, 
so that $Z$ has bounded second moment.

Since for every $n\in\mathbb{N}$ and $\omega\in\Omega$ the function $P_n:\mathbb{T}\to\mathbb{C}$ given by $$P_n(\lambda):=\frac{1}{n}\sum_{k=0}^{n-1}(Y_k(\omega)-\mathbb{E}(Y_0))\lambda^k$$ is continuous (in the variable $\lambda$), it follows that the function $f_n:\Omega\to\mathbb{C}$ given by $$f_n(\omega):=\sup_{\lambda\in\mathbb{T}}\left|\frac{1}{n}\sum_{k=0}^{n-1}(Y_k(\omega)-\mathbb{E}(Y_0))\lambda^k\right|^2$$ is measurable (in the variable $\omega$) and defines an element of $L_\infty(\Omega,\mathcal{F},\mu)$ since $Y$ is bounded. After some small rewriting of the indices in \cite[Theorem 1]{assani1} (as done in Remark \ref{ReIndexing}) these conditions on $Z$ imply that
$$\int_{\Omega}\sup_{\lambda\in\mathbb{T}}\left|\frac{1}{n}\sum_{k=0}^{n-1}Z_k(\omega)\lambda^k\right|^2d\mu(\omega) 
\leq\frac{(1+\sqrt{2})\mathbb{E}(|Z_0|^2)}{\sqrt{n}}$$
for every $n\in\mathbb{N}$. Therefore if $\rho>1$ then it follows that
\begin{align*}
\int_{\Omega}\sum_{m=1}^{\infty}\sup_{\lambda\in\mathbb{T}}\Bigg|\frac{1}{\rho^m}\sum_{k=0}^{[\rho^m]-1}(Y_k(\omega)&-\mathbb{E}(Y_0))\lambda^k\Bigg|^2d\mu(\omega) \\
\leq&\sum_{m=1}^{\infty}\int_{\Omega}\sup_{\lambda\in\mathbb{T}}\left|\frac{1}{[\rho^m]}\sum_{k=0}^{[\rho^m]-1}(Y_k(\omega)-\mathbb{E}(Y_0))\lambda^k\right|^2d\mu(\omega) \\
=&\sum_{m=1}^{\infty}\int_{\Omega}\sup_{\lambda\in\mathbb{T}}\left|\frac{1}{[\rho^m]}\sum_{k=0}^{[\rho^m]-1}Z_k(\omega)\lambda^k\right|^2d\mu(\omega) \\
\leq&\sum_{m=1}^{\infty}\frac{(1+\sqrt{2})\mathbb{E}(|Z_0|^2)}{\sqrt{[\rho^m]}}
\end{align*}
\begin{align*}
\leq&\sum_{m=1}^{\infty}\frac{4(1+\sqrt{2})\|Y_0\|_\infty^2}{\sqrt{[\rho^m]}} \\
\leq&4\sqrt{2}(1+\sqrt{2})\|Y_0\|_\infty^2\sum_{m=1}^{\infty}\left(\frac{1}{\sqrt{\rho}}\right)^m<\infty,
\end{align*}
where the first and last inequalities follow from the fact that $1\leq [\rho^m]\leq\rho^m\leq2[\rho^m]$ implies that $\frac{1}{\rho^m}\leq\frac{1}{[\rho^m]}$ and $\frac{1}{\sqrt{[\rho^m]}}\leq\sqrt{\frac{2}{\rho^m}}=\sqrt{2}\left(\frac{1}{\sqrt{\rho}}\right)^m$ for every $m\in\mathbb{N}$, and then using the fact that $\frac{1}{\sqrt{\rho}}<1$ implies the geometric series converges. Hence, for every $\rho>1$ there exists a set $\Omega_{\rho}\in\mathcal{F}$ with $\mu(\Omega_{\rho})=1$ satisfying $$\sum_{m=1}^{\infty}\sup_{\lambda\in\mathbb{T}}\left|\frac{1}{\rho^m}\sum_{k=0}^{[\rho^m]-1}\Big(Y_k(\omega)-\mathbb{E}(Y_0)\Big)\lambda^k\right|^2<\infty$$ for every $\omega\in\Omega_{\rho}$. 

For every $j\in\mathbb{N}_0$ define $\rho_j:=2^{1/2^j}$, and let $\Omega^{\prime}:=\bigcap_{j=0}^{\infty}\Omega_{\rho_j}$. Then it follows that $(\rho_j)_{j=0}^{\infty}\subset(1,\infty)$ satisfies the conditions of Theorem \ref{ConvergenceCriteriaI} for the sequence $Z(\omega)\in W_\infty$ whenever $\omega\in\Omega^{\prime}$, implying $M_n^{Z(\omega)}(T)(x)$ converges b.a.u. as $n\to\infty$ for every semifinite von Neumann algebra $(\mathcal{M},\tau)$, $T\in DS^+(\mathcal{M},\tau)$, $1\leq p<\infty$, and $x\in L_p(\mathcal{M},\tau)$.

Keeping the notations from the previous paragraph, assume further that either $1<p<\infty$ or $p=1$ and $\tau$ is finite. Then the b.a.u. limit of $(M_n^{Z(\omega)}(T)(x))_{n=1}^{\infty}$ will be $0$ for each $\omega\in\Omega^{\prime}$; therefore, since $$M_n^{Y(\omega)}(T)(x)=M_n^{Z(\omega)}(T)(x)+\mathbb{E}(Y_0)M_n(T)(x)$$ for every $n\in\mathbb{N}$, and since the latter two sequences converge b.a.u. to $0$ and $\mathbb{E}(Y_0)F_T(x)$ respectively as $n\to\infty$, it follows that $(M_n^{Y(\omega)}(T)(x))_{n=1}^{\infty}$ converges to $\mathbb{E}(Y_0)F_T(x)$ b.a.u. as $n\to\infty$ for all $x\in L_p(\mathcal{M},\tau)$ with $p$ and $\tau$ as above.
\end{proof}

\begin{rem}\label{WhyOnlySubsequenceAndNotAllReasoning}
We wish to note that the paragraph defining the $\rho_j$'s and $\Omega^{\prime}$ is where our phrasing of Lemma \ref{SubsequenceArgument} was needed. In particular, this particular wording was needed to guarantee that the set $\Omega^{\prime}$ was actually measurable since one needs to take the intersection of sets to find it. While it may be the case that $\bigcap_{\rho>1}\Omega_{\rho}$ is measurable with full measure, the argument above doesn't immediately prove that it is, so using countably many sets is a much easier way to guarantee that the set $\Omega^{\prime}$ found in the theorem is measurable with $\mu(\Omega^{\prime})=1$.
\end{rem}

We will now show that the assumption of the i.i.d. sequence under consideration being bounded can be replaced by it having finite $q$-th moment, extending case (xiii) from the list in the introduction to the noncommutative setting.

\begin{teo}\label{UnboundedIID-Duality}
Let $1<p,q<\infty$ be such that $\frac{1}{p}+\frac{1}{q}=1$. Let $(\Omega,\mathcal{F},\mu)$ be a probability space and let $Y=(Y_k)_{k=0}^{\infty}$ be an i.i.d. sequence of random variables on $(\Omega,\mathcal{F},\mu)$ such that  $\mathbb{E}(|Y_0|^q)<\infty$.

Then there exists a set $\Omega^{\prime}\in\mathcal{F}$ with $\mu(\Omega^{\prime})=1$ such that for every $\omega\in\Omega^{\prime}$: if $(\mathcal{M},\tau)$ is a semifinite von Neumann algebra and $T\in DS^+(\mathcal{M},\tau)$, then the weighted ergodic averages
$$M_n^{Y(\omega)}(T)(x):=\frac{1}{n}\sum_{k=0}^{n-1}Y_k(\omega)T^k(x)$$ converge b.a.u. as $n\to\infty$ for every $x\in L_p(\mathcal{M},\tau)$. Furthermore, if $F_T(x)\in L_p(\mathcal{M},\tau)$ denotes the b.a.u. limit of $(M_n(T)(x))_{n=1}^{\infty}$, then the b.a.u. limit of $(M_n^{Y(\omega)}(T)(x))_{n=1}^{\infty}$ is equal to $\mathbb{E}(Y_0)F_T(x)$ for every $\omega\in\Omega^{\prime}$ if either $p>1$ or $p=1$ and $\tau$ is finite.

\end{teo}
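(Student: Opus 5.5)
The plan is a truncation argument: split $Y$ into a bounded part, handled by Theorem \ref{BoundedIID}, and an unbounded tail, made small using the strong law of large numbers together with the maximal inequality of Theorem \ref{WeightedErgodicTheoryResults} (the duality $\frac1p+\frac1q=1$ is used only there). Closedness of $bWW_p^T(\mathcal{W})$ is not directly usable, since we perturb the weight rather than $x$. Instead, we prove b.a.u. Cauchyness directly by an $\epsilon/3$ argument, with projections supplied by the maximal inequality.

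\emph{Truncation.} For each $N\in\mathbb{N}$ let $f_N(t):=t\chi_{\{|t|\leq N\}}(t)$ and $g_N(t):=t-f_N(t)=t\chi_{\{|t|>N\}}(t)$, both Borel. Then $f_N\circ Y$ is a bounded i.i.d. sequence, and $g_N\circ Y$ is i.i.d. with $\mathbb{E}(|g_N(Y_0)|^q)<\infty$. Apply Theorem \ref{BoundedIID} to $f_N\circ Y$ to get $\Omega_N^{(1)}$ of full measure. Apply the strong law of large numbers to $|g_N\circ Y_k|^q$, $|Y_k|^q$ and $Y_k$ to get $\Omega_N^{(2)}$ of full measure on which
\[
\|g_N\circ Y(\omega)\|_{W_q}^q=\mathbb{E}(|Y_0|^q\chi_{\{|Y_0|>N\}}), \qquad |Y(\omega)|_{W_q}<\infty .
\]
Set $\Omega'=\bigcap_N(\Omega_N^{(1)}\cap\Omega_N^{(2)})$, a countable intersection, hence measurable of full measure. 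By dominated convergence, $\delta_N:=\mathbb{E}(|Y_0|^q\chi_{\{|Y_0|>N\}})^{1/q}\to0$.

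\emph{Approximation step (the main obstacle).} Theorem \ref{WeightedErgodicTheoryResults} is stated with $|\alpha|_{W_q}$, a supremum over all $n$, whereas the law of large numbers only controls the $\limsup$. The fix is to drop finitely many terms. For fixed $\omega$ and $N$, choose $K$ so that $\frac1n\sum_{k<n}|g_N(Y_k(\omega))|^q\leq 2\delta_N^q$ for all $n\geq K$. Define $\beta$ by $\beta_k=0$ for $k<K$ and $\beta_k=g_N(Y_k(\omega))$ for $k\geq K$. Then $|\beta|_{W_q}\leq 2^{1/q}\delta_N$. The difference $M_n^{g_N\circ Y(\omega)}(T)(x)-M_n^{\beta}(T)(x)$ is $\frac1n$ times a fixed finite sum of elements of $L_p$, and the $L_p$-norms of these elements sum to a finite quantity; so this difference tends to $0$ in $L_p$ and, by a single application of the maximal inequality or directly, b.a.u. (one $c/n$ term at a time).

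Now fix $\omega\in\Omega'$, $x\in L_p$, $\epsilon,\eta>0$. Apply Theorem \ref{WeightedErgodicTheoryResults} with $\lambda=\eta$ to $x$ and the weight $\beta$. Absorbing $|\beta|_{W_q}$ into the choice of $N$, we obtain $e_1$ with $\tau(e_1^\perp)\leq\epsilon/2$ as soon as $N$ is large, such that $\|e_1M_n^{\beta}(T)(x)e_1\|_\infty\leq\eta$ for all $n$. Take $e_2$ with $\tau(e_2^\perp)\leq\epsilon/2$ from the b.a.u. convergence of $M_n^{f_N\circ Y(\omega)}(T)(x)$, and let $e=e_1\wedge e_2$. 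Then $(eM_n^{Y(\omega)}(T)(x)e)_n$ has oscillation at most $2\eta$ asymptotically. Making $\eta\to0$ along a sequence $\eta_i=2^{-i}$ with $\tau$-budgets $\epsilon 2^{-i}$ and intersecting the projections gives a single $e$ on which the sequence is Cauchy in $\mathcal{M}$, so it converges. This yields b.a.u. convergence.

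\emph{Identification of the limit.} The limit of the bounded part is $\mathbb{E}(f_N(Y_0))F_T(x)$ by Theorem \ref{BoundedIID}. The tail contributes an element $z_N$ whose $L_p$-size is controlled, in the measure topology, by $\delta_N\|x\|_p$, via the same maximal inequality. Since $\mathbb{E}(f_N(Y_0))\to\mathbb{E}(Y_0)$, and the limit (which is independent of $N$) agrees with $\mathbb{E}(f_N(Y_0))F_T(x)+z_N$ for every $N$, letting $N\to\infty$ in the Hausdorff measure topology gives the limit $\mathbb{E}(Y_0)F_T(x)$.
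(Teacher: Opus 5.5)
Your argument is correct, but it follows a different route from the paper's.

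\emph{The paper's route.} The paper reduces to $Y_k\geq0$, truncates at level $t$, and controls the tail only in $W_1$: the strong law, H\"{o}lder and Chebyshev give $\|Y(\omega)-Z^{(t)}(\omega)\|_{W_1}\leq\|Y_0\|_q^{1+1/p}t^{-1/p}$. It then proves convergence to $\mathbb{E}(Y_0)F_T(x)$ only for $x\in L_1\cap\mathcal{M}$. There the trivial bound $\|eT^k(x)e\|_\infty\leq\|x\|_\infty$ gives $\|e(M_n^{Y(\omega)}(T)(x)-M_n^{Z^{(R)}(\omega)}(T)(x))e\|_\infty\leq\|x\|_\infty\frac{1}{n}\sum_{k=0}^{n-1}|Y_k(\omega)-Z_k^{(R)}(\omega)|$ for each large $n$, so the $\limsup$-versus-$\sup$ problem you identified never comes up. The passage to all of $L_p$ is by density and the closedness of $bWW_p^{T}(\{Y(\omega)\})$ from Theorem \ref{WeightedErgodicTheoryResults}. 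That is where duality and $Y(\omega)\in W_q$ are really needed. So closedness \emph{is} usable, once a dense class of bounded $x$ has been handled.

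\emph{Your route.} You perturb the weight in $W_q$ and feed the tail directly into the uniform-in-$\alpha$ maximal inequality for all $x\in L_p$ at once. This costs you the $W_q$-smallness of the tail, and the trick of discarding the first $K$ terms to turn $\limsup$ control into a bound on $|\beta|_{W_q}$. In exchange you drop the density step and the reduction to nonnegative $Y_k$, since dominated convergence gives $\mathbb{E}(f_N(Y_0))\to\mathbb{E}(Y_0)$. You also identify the limit directly for every $x\in L_p$. A cleaner form of your ``controlled in the measure topology'' is this: the b.a.u. limit $z_N$ of the tail averages satisfies $\|z_N\|_p\leq\delta_N\|x\|_p$ by the Fatou property of $L_p$, since $\limsup_n\|M_n^{g_N\circ Y(\omega)}(T)(x)\|_p\leq\|g_N\circ Y(\omega)\|_{W_1}\|x\|_p\leq\delta_N\|x\|_p$.

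Two cosmetic points. First, the term $\frac{1}{n}\sum_{k<K}g_N(Y_k(\omega))T^k(x)$ needs its own projection in the meet, since $e_1\wedge e_2$ need not make it bounded. Second, when $\delta_N=0$ the strong law alone does not justify your choice of $K$, so use a bound such as $\delta_N^q+\frac{1}{N}$ in place of $2\delta_N^q$.
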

\begin{proof}
Since $(Y_k)_{k=0}^{\infty}$ being an i.i.d. sequence on $(\Omega,\mathcal{F},\mu)$ implies that $(f(Y_k))_{k=0}^{\infty}$ is an i.i.d. sequence on $(\Omega,\mathcal{F},\mu)$ for every measurable function $f:\mathbb{C}\to\mathbb{C}$ as well, we may assume that $Y_k$ is nonnegative for every $k\in\mathbb{N}_0$ by proving the claim for $A_{k,1}:=Re(Y_k)^{+}$, $A_{k,2}:=Im(Y_k)^{+}$, $A_{k,3}:=Re(Y_k)^{-}$, and $A_{k,4}:=Im(Y_k)^{-}$ (where $Re(Y_k)^{+}:=\chi_{[0,\infty)}(Re(Y_k))Re(Y_k)$ and the others being defined similarly) and then using the fact that $Y_k=\sum_{j=0}^{3}i^j A_{k,j}$ as functions on $\Omega$ for every $k\in\mathbb{N}_0$ and that $\mathbb{E}(Y_0)=\sum_{j=0}^{3}i^j\mathbb{E}(A_{0,j})$.

Fix $t>0$ and define $Z_k^{(t)}(\omega):=\chi_{[0,t]}(Y_k(\omega))Y_k(\omega)$ for every $k\in\mathbb{N}_0$; note that $Z_k^{(t)}=g_t(Y_k)$ for the measurable function $g_t:[0,\infty)\to[0,\infty)$ given by $g_t(y)=\chi_{[0,t]}(y)y$ for $y\in[0,\infty)$, so that $Z^{(t)}:=(Z_k^{(t)})_{k=0}^{\infty}$ is a bounded i.i.d. sequence on $(\Omega,\mathcal{F},\mu)$ for every $t>0$.

By using the (measurable) function $f_t:[0,\infty)\to\mathbb{R}$ defined by $f_t(y)=y-\chi_{[0,t]}(y)y=\chi_{(t,\infty)}(y)y$ on $[0,\infty)$ one finds that $(|Y_k-Z_k^{(t)}|)_{k=0}^{\infty}=(f_t(Y_k))_{k=0}^{\infty}=f_t\circ Y$ is an i.i.d. sequence on $(\Omega,\mathcal{F},\mu)$. Hence by the strong law of large numbers, H\"{o}lder's inequality, and Chebyshev's Inequality (which states that for every $t>0$ it follows that $\mu(\{\omega\in\Omega:|Y_0(\omega)|\geq t\}) \leq\frac{\|Y_0\|_1}{t}$), there exists a set $\Omega_t\in\mathcal{F}$ with $\mu(\Omega_t)=1$ such that for every $\omega\in\Omega_t$ it follows that
\begin{align*}
	\lim_{n\to\infty}\frac{1}{n}\sum_{k=0}^{n-1}|Y_k&(\omega)-Z_k^{(t)}(\omega)|
	=\mathbb{E}(|Y_0-Z_0^{(t)}|) \\
	=&\int_{\Omega}|Y_0(\omega)-Z_0^{(t)}(\omega)|d\mu(\omega) \\
	=&\int_{|Y_0|^{-1}((t,\infty))}|Y_0(\omega)|d\mu(\omega) \\
	\leq&\left(\int_{|Y_0|^{-1}((t,\infty))}|Y_0(\omega)|^qd\mu(\omega)\right)^{1/q}\left(\int_{|Y_0|^{-1}((t,\infty))}1^{p}d\mu(\omega)\right)^{1/p} \\
	\leq&\|Y_0\|_q\mu(\{\omega\in\Omega:|Y_0(\omega)|>t\})^{1/p} \\
	\leq&\frac{\|Y_0\|_q\|Y_0\|_1^{1/p}}{t^{1/p}}\leq\frac{\|Y_0\|_q^{1+\frac{1}{p}}}{t^{1/p}}
\end{align*}
(where the last inequality follows from the fact that $(\Omega,\mathcal{F},\mu)$ being a probability space implies $\|Y_0\|_1\leq\|Y_0\|_q<\infty$). Therefore
$$\|Y(\omega)-Z^{(t)}(\omega)\|_{W_1}=\limsup_{n\to\infty}\frac{1}{n}\sum_{k=0}^{n-1}|Y_k(\omega)-Z_k^{(t)}(\omega)|\leq\frac{\|Y_0\|_q^{1+\frac{1}{p}}}{t^{1/p}}$$ for every $\omega\in\Omega_{t}$ and every $t>0$.

For every $r\in\mathbb{N}_0$ let $\Omega_{r}^{\prime}\in\mathcal{F}$ be the set with $\mu(\Omega_{r}^{\prime})=1$ obtained in Theorem \ref{BoundedIID} associated to the bounded i.i.d. sequence $(Z_k^{(r)})_{k=0}^{\infty}$ on $(\Omega,\mathcal{F},\mu)$. 
If we define $\Omega^{\prime}:=\bigcap_{r=1}^{\infty}(\Omega_{r}\cap\Omega_{r}^{\prime})$, then $\Omega^{\prime}\in\mathcal{F}$ and $\mu(\Omega^{\prime})=1$.

Fix $\omega\in\Omega^{\prime}$, $x\in L_1\cap\mathcal{M}$, and $\epsilon>0$. Let $F_T(x)\in L_0$ be such that $M_n(T)(x)\to F_T(x)$ b.a.u. as $n\to\infty$. Since $M_n^{Z^{(t)}(\omega)}(T)(x)\to\mathbb{E}(Z_0^{(t)})F_T(x)$ and $\mathbb{E}(Z_0^{(t)})M_n(T)(x)\to\mathbb{E}(Z_0^{(t)})F_T(x)$ b.a.u. as $n\to\infty$ for every $t>0$, we find that for every $r\in\mathbb{N}$ there exists $e_r\in\mathcal{P(M)}$ such that $\tau(e_r^{\perp})\leq\frac{\epsilon}{2^{r+1}}$ and $\|e_r(M_n^{Z^{(r)}(\omega)}(T)(x)-\mathbb{E}(Z_0^{(r)})M_n(T)(x))e_r\|_\infty\to0$ as $n\to\infty$. Letting $e=\bigwedge_{r=1}^{\infty}e_r$, we have that $\tau(e^\perp)\leq\epsilon$ and $$\left\|e\left(M_n^{Z^{(r)}(\omega)}(T)(x)-\mathbb{E}(Z_0^{(r)})M_n(T)(x)\right)e\right\|_\infty\to0\text{ as }n\to\infty$$ for every $r\in\mathbb{N}_0$. Note that $x\in L_1\cap\mathcal{M}$ implies that $\|eT^k(x)e\|_\infty\leq\|x\|_\infty$ and $\|eM_n(T)(x)e\|_\infty\leq\|x\|_\infty$ for all $k\in\mathbb{N}_0$ and $n\in\mathbb{N}$.

Fix $\delta>0$.

Since $0\leq Z_0^{(s)}(\omega)\leq Z_0^{(t)}(\omega)$ for $0<s\leq t$ and $Z_0^{(t)}(\omega)\to Y_0(\omega)$ as $t\to\infty$ for $\mu$-a.e. $\omega\in\Omega$, it follows by Lebesgue's Monotone Convergence Theorem that $\lim_{r\to\infty}\mathbb{E}(Z_0^{(r)})=\mathbb{E}(Y_0)$, so that there exists $R_1\in\mathbb{N}_0$ such that $r\geq R_1$ implies $$|\mathbb{E}(Y_0)-\mathbb{E}(Z_0^{(r)})|<\frac{\delta}{3(\|x\|_\infty+1)}.$$

Let $R_2\in\mathbb{N}_0$ be such that $r\geq R_2$ implies
$$\frac{1}{r^{1/p}}\leq\frac{\delta}{6\left(\|Y_0\|_q^{1+\frac{1}{p}}+1\right)(\|x\|_\infty+1)}.$$ 

Let $R=\max\{R_1,R_2\}$. Then since $\omega\in\Omega_{R}$ it follows that there exists $N_1\in\mathbb{N}_0$ such that $n\geq N_1$ implies $$\frac{1}{n}\sum_{k=0}^{n-1}|Y_k(\omega)-Z_0^{(R)}(\omega)|\leq\frac{\|Y_0\|_q^{1+\frac{1}{p}}}{R^{1/p}}+\frac{\delta}{6(\|x\|_\infty+1)}\leq\frac{\delta}{3(\|x\|_\infty+1)}.$$

Since $\omega\in\Omega_{R}^{\prime}$, there exists $N_2\in\mathbb{N}_0$ such that $n\geq N_2$ implies
$$\|e(M_n^{Z^{(R)}(\omega)}(T)(x)-\mathbb{E}(Z_0^{(R)})M_n(T)(x))e\|_\infty<\frac{\delta}{3}.$$ If $N=\max\{N_1,N_2\}$, then $n\geq N$ implies
\begin{align*}
	\|e(M_n^{Y(\omega)}(T)(x)-\mathbb{E}(Y_0)M_n(T)&(x))e\|_\infty \\
	\leq&\|e(M_n^{Y(\omega)}(T)(x)-M_n^{Z^{(R)}(\omega)}(T)(x))e\|_\infty \\
	&+\|e(M_n^{Z^{(R)}(\omega)}(T)(x)-\mathbb{E}(Z_0^{(R)})M_n(T)(x))e\|_\infty \\
	&+\|e(\mathbb{E}(Z_0^{(R)})M_n(T)(x)-\mathbb{E}(Y_0)M_n(T)(x))e\|_\infty \\
	\leq&\frac{1}{n}\sum_{k=0}^{n-1}|Y_k(\omega)-Z_k^{(R)}(\omega)|\|eT^k(x)e\|_\infty \\
	&+\|e(M_n^{Z^{(R)}(\omega)}(T)(x)-\mathbb{E}(Z_0^{(R)})M_n(T)(x))e\|_\infty \\
	&+|\mathbb{E}(Z_0^{(R)})-\mathbb{E}(Y_0)|\|x\|_\infty \\
	\leq&\frac{\delta\|x\|_\infty}{3(\|x\|_\infty+1)}+\frac{\delta}{3}+\frac{\delta\|x\|_\infty}{3(\|x\|_\infty+1)}\leq\delta.
\end{align*}

Since $n\geq N$ and $\delta>0$ were arbitrary, it follows that $$\left\|e\left(M_n^{Y(\omega)}(T)(x)-\mathbb{E}(Y_0)M_n(T)(x)\right)e\right\|_\infty\to0$$ as $n\to\infty$, and since $\epsilon>0$ was arbitrary it follows that $$M_n^{Y(\omega)}(T)(x)-\mathbb{E}(Y_0)M_n(T)(x)\to0\text{ b.a.u. as }n\to\infty.$$ Since $\mathbb{E}(Y_0)M_n(T)(x)\to\mathbb{E}(Y_0)F_T(x)$ b.a.u. as $n\to\infty$ by \cite[Theorem 6.3]{jx}, it follows that $M_n^{Y(\omega)}(T)(x)\to\mathbb{E}(Y_0)F_T(x)$ b.a.u. as $n\to\infty$ as well.

Since $x\in L_1\cap\mathcal{M}$ was arbitrary, it follows that $(M_n^{Y(\omega)}(T)(x))_{n=1}^{\infty}$ converges b.a.u. as $n\to\infty$ for all $x\in L_p(\mathcal{M},\tau)$ and $\omega\in\Omega^{\prime}$ by \cite[Corollary 3.5]{ob4} and \cite[Theorem 3.4]{ob2} since $Y(\omega)\in W_q$. For similar reasons as in Theorem \ref{ConvergenceCriteriaI} it follows that the b.a.u. limit will be $\mathbb{E}(Y_0)F_T(x)$ for every $x\in L_p$ if either $p>1$ or $p=1$ and $\tau$ is finite.
\end{proof}

\section{$T$-admissible Processes}\label{TAdm}

Throughout this section, unless otherwise stated we will assume that $(\mathcal{M},\tau)$ is a semifinite von Neumann algebra and $1\leq p<\infty$. In this section we will focus our attention on the case where $T:\mathcal{M}\to\mathcal{M}$ is a normal $\tau$-preserving $*$-automorphism (and will use $T$ for its isometric extensions to $L_r(\mathcal{M},\tau)$ for every $1\leq r\leq\infty$).

Following the language of \cite{comli2}, a sequence $x=(x_k)_{k=0}^{\infty}\subset L_p(\mathcal{M},\tau)^h$ is called a \textit{$T$-admissible process} if $T(x_k)\leq x_{k+1}$ for every $k\in\mathbb{N}_0$, and it is \textit{strongly $p$-bounded} if $\sup_{k\in\mathbb{N}_0}\|x_k\|_p<\infty$.

If $1\leq p,q\leq\infty$, $x=(x_k)_{k=0}^{\infty}\subset L_p(\mathcal{M},\tau)$, and $\alpha=(\alpha_k)_{k=0}^{\infty}\in W_q$, then write
$$A_n^{\alpha}(x):=\frac{1}{n}\sum_{k=0}^{n-1}\alpha_k x_k$$ for the $n$-th average of the sequence $x$ weighted by $\alpha$. If $y\in L_p(\mathcal{M},\tau)$ and $T\in DS^+(\mathcal{M},\tau)$ are fixed and if $x$ above is given by $x_k=T^k(y)$ for every $k\in\mathbb{N}_0$, then $A_n^{\alpha}(x)=M_n^{\alpha}(T)(y)$ for every $n\in\mathbb{N}$.

\begin{lm}\label{l31}
	Assume $x=(x_k)_{k=0}^{\infty}\subset L_p(\mathcal{M},\tau)^+$ is a strongly $p$-bounded $T$-admissible process. Then there exists $w\in L_p(\mathcal{M},\tau)^+$ such that $\|w-T^{-k}(x_k)\|_p\to0$ as $k\to\infty$ and $x_k\leq T^k(w)$ for every $k\in\mathbb{N}_0$.
\end{lm}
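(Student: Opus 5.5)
The plan is to work with the pulled-back sequence $w_k:=T^{-k}(x_k)$. Here $T$ is a normal $\tau$-preserving $*$-automorphism, so $T^{-1}$ exists, is again a normal $\tau$-preserving $*$-automorphism, and extends to a positive isometry of every $L_r(\mathcal{M},\tau)$. The argument has three steps: monotonicity of $(w_k)$, norm convergence to a limit $w$, and the domination $x_k\leq T^k(w)$.

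\textbf{Step 1: monotonicity and boundedness.} Applying the positive map $T^{-(k+1)}$ to the admissibility relation $T(x_k)\leq x_{k+1}$ gives
$$w_k=T^{-k}(x_k)=T^{-(k+1)}(T(x_k))\leq T^{-(k+1)}(x_{k+1})=w_{k+1}.$$
So $(w_k)_{k=0}^{\infty}$ is increasing in $L_p(\mathcal{M},\tau)^+$, and $w_k\geq0$ because each $x_k\geq0$. Since $T^{-k}$ is an isometry of $L_p$, we also have $\|w_k\|_p=\|x_k\|_p\leq\sup_j\|x_j\|_p=:C<\infty$. Thus $(w_k)$ is an increasing, norm-bounded sequence of positive elements of $L_p$.

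\textbf{Step 2: norm convergence (the main step).} For $1<p<\infty$ I will use a monotone convergence theorem in noncommutative $L_p$: an increasing norm-bounded sequence in $L_p^+$ converges in norm, because $L_p$ has order continuous norm. A direct route is the following. Since $0\leq w_k\leq w_m$ for $k\leq m$, we have $0\leq w_m-w_k\leq w_m$. For $p\geq1$ there is a Clarkson/McCarthy-type inequality for positive operators $0\leq a\leq b$:
$$\|b-a\|_p^p\leq\|b\|_p^p-\|a\|_p^p.$$
The sequence $\|w_k\|_p^p$ is increasing and bounded by $C^p$, so it converges, and the inequality then shows that $(w_k)$ is Cauchy in $L_p$. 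For the inequality itself I would first try the trace inequality $\tau\big((b-a)^p\big)+\tau(a^p)\leq\tau(b^p)$ for $0\leq a\leq b$, which is the noncommutative superadditivity of $t\mapsto t^p$. If citing it is problematic, a fallback is the $L_p$-version of the noncommutative monotone convergence theorem: increasing bounded nets in $L_p^+$ have a supremum $w$ with $\tau(w_k^p)\uparrow\tau(w^p)$, and norm convergence then follows via uniform convexity for $1<p<\infty$. For $p=1$ the argument is simpler, since $\|w_m-w_k\|_1=\tau(w_m)-\tau(w_k)$ by linearity and positivity of the trace. In every case we obtain $w\in L_p^+$ with $\|w-T^{-k}(x_k)\|_p\to0$; positivity of $w$ holds because $L_p^+$ is closed.

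\textbf{Step 3: domination.} Fix $k$. For every $m\geq k$ we have $w_k\leq w_m$, and letting $m\to\infty$ gives $w_m-w_k\to w-w_k$ in $L_p$. Since $L_p^+$ is norm closed, $w-w_k\geq0$, that is, $w_k\leq w$. Applying the positive map $T^k$ yields $x_k=T^k(w_k)\leq T^k(w)$, as required.

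The expected obstacle is Step 2, namely justifying norm convergence of an increasing bounded sequence in noncommutative $L_p$ by citing an appropriate inequality or monotone convergence result.
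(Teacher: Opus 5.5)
Your proof is correct, and it differs from the paper's in how Steps 2 and 3 are obtained. After the same observation as your Step 1, the paper cites the Beppo--Levi property of $L_p(\mathcal{M},\tau)$ (Dodds--de Pagter). This gives $w:=\sup_k T^{-k}(x_k)\in L_p^+$ as an order supremum with $\|w\|_p=\sup_k\|x_k\|_p$. The paper then asserts $\|w-T^{-k}(x_k)\|_p\to0$ and reads off $x_k\le T^k(w)$ from the supremum property and positivity of $T^k$.

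You instead build $w$ as a norm limit. McCarthy's superadditivity $\tau(a^p)+\tau((b-a)^p)\le\tau(b^p)$ for $0\le a\le b$ and $p\ge1$ (valid in semifinite von Neumann algebras, e.g.\ via the Brown--Kosaki Jensen trace inequality), or plain linearity of $\tau$ when $p=1$, makes $(w_k)$ Cauchy with the explicit rate $\|w-w_k\|_p^p\le\|w\|_p^p-\|w_k\|_p^p$. Domination then follows from norm-closedness of $L_p^+$.

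What your route buys is a self-contained, quantitative proof of the norm convergence. The paper's citation leaves this step implicit: the Levi property yields the supremum and the norm equality, and passing to norm convergence strictly also uses order continuity of the $L_p$-norm for $p<\infty$. Your first sentence of Step 2 has this backwards. Order continuity alone does not make increasing bounded sequences converge (think of $c_0$); the Levi property is also needed. Your direct Cauchy argument makes that sentence unnecessary, so it does no harm.

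What the paper's route buys is the identification of $w$ as the least upper bound, matching the notation $w=\sup_k T^{-k}(x_k)$ used later in the proof of Theorem \ref{TAdmSequencesAreGood}. You recover this with one more line, not needed for the lemma as stated: for any upper bound $u\in L_0$ one has $u-w_k\ge0$ and $u-w_k\to u-w$ in measure, and $L_0^+$ is closed in measure.
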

\begin{proof}
	Since $x$ is strongly $p$-bounded and $T$-admissible, and since $T^{-k}$ is positive for every $k\geq0$, it follows that $(T^{-k}(x_k))_{k=0}^{\infty}$ is strongly $p$-bounded and increasing (with respect to the ordering on $L_0^+$). It was shown in \cite[Proposition 3.2 and Theorem 5.15]{ddp} that $L_p(\mathcal{M},\tau)$ has the Beppo-Levi property when $1\leq p<\infty$; together with what we've shown thus far, this implies that $w:=\sup_{k\in\mathbb{N}_0}T^{-k}(x_k)$ exists and is in $L_p^+$, that $\|w\|_p=\sup_{k\in\mathbb{N}_0}\|T^{-k}(x_k)\|_p=\sup_{k\in\mathbb{N}_0}\|x_k\|_p$, and that $\|w-T^{-k}(x_k)\|_p\to0$ as $k\to\infty$. The supremum property of $w$ then also implies that $x_k\leq T^k(w)$ for every $k\in\mathbb{N}_0$ since $T^k$ is positive.
\end{proof}

\begin{df} The operator $w\in L_p^{+}$ obtained in Lemma \ref{l31} will be called the \textit{exact dominant} of $x=(x_k)_{k=0}^{\infty}$. \end{df}

For any $1\leq q\leq\infty$ define the shift map $\theta:W_q\to W_q$ by $\theta((\alpha_k)_{k=0}^{\infty}):=(\alpha_{k+1})_{k=0}^{\infty}$; this map is an isometry with respect to $\|\cdot\|_{W_q}$ when $1\leq q<\infty$, while it is a contraction on $W_\infty$. In the commutative setting it was shown in \cite[Proposition 2.2]{comlio} that if a weight $\alpha=(\alpha_k)_{k=0}^{\infty}\in W_\infty$ is such that $\frac{1}{n}\sum_{k=0}^{n-1}\alpha_kT^k(f)$ converges a.e. for every $f\in L_p$ and $T\in DS^+$ then the same holds if one replaces each $\alpha_k$ by $\theta(\alpha)_k=\alpha_{k+1}$. The proof uses results that are not currently available in the noncommutative setting, but the following shows that this does hold for $*$-automorphisms.

\begin{lm}\label{l32}
	Let $(\mathcal{M},\tau)$ be a semifinite von Neumann algebra,  $T:\mathcal{M}\to\mathcal{M}$ be a normal $\tau$-preserving $*$-automorphism, and $\mathcal{W}\subseteq W_1$. If $L_p(\mathcal{M},\tau)=bWW_p^{T}(\mathcal{W})$, then $L_p(\mathcal{M},\tau)=bWW_p^{T}(\bigcup_{k=0}^{\infty}\theta^k(\mathcal{W}))$.
\end{lm}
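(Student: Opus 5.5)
The plan is to move the shift $\theta^j$ in the weight onto the operator $T^j$, which is invertible here, and then undo it with $T^{-j}$, which is an isometric $*$-automorphism and so preserves everything relevant to b.a.u. convergence. Fix $x\in L_p$, $j\in\mathbb{N}_0$, $\alpha\in\mathcal{W}$ and $n\in\mathbb{N}$. Reindexing the sum gives the identity
$$T^{j}\bigl(M_n^{\theta^j(\alpha)}(T)(x)\bigr)=\frac{1}{n}\sum_{k=0}^{n-1}\alpha_{k+j}T^{k+j}(x)=\frac{n+j}{n}M_{n+j}^{\alpha}(T)(x)-\frac{1}{n}\sum_{k=0}^{j-1}\alpha_kT^k(x).$$
Since $T^j$ is a normal $*$-automorphism preserving $\tau$, it maps projections to projections of equal trace. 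For any $f\in\mathcal{P(M)}$ and $y\in L_0$ we also have $\|T^{-j}(f)\,T^{-j}(y)\,T^{-j}(f)\|_\infty=\|fyf\|_\infty$. So it suffices to find, for each $j$, a projection $f_j$ with small $\tau(f_j^\perp)$ such that the right-hand side, compressed by $f_j$, converges in $\|\cdot\|_\infty$ for every $\alpha\in\mathcal{W}$ simultaneously.

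Fix $\epsilon>0$ and $j\in\mathbb{N}_0$. By the hypothesis $x\in bWW_p^T(\mathcal{W})$ there is $e_j\in\mathcal{P(M)}$ with $\tau(e_j^\perp)\le\epsilon/2^{j+2}$ such that $(e_jM_m^{\alpha}(T)(x)e_j)_m$ converges in $\mathcal{M}$ for all $\alpha\in\mathcal{W}$. Then $\frac{n+j}{n}e_jM_{n+j}^{\alpha}(T)(x)e_j$ also converges, since $\frac{n+j}{n}\to1$ and a convergent sequence is bounded.

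For the finite correction term we use that each $T^k(x)$, $k<j$, is $\tau$-measurable. So there are projections $g_k$ with $\tau(g_k^\perp)\le\epsilon/2^{j+k+3}$ and $T^k(x)g_k\in\mathcal{M}$. Put $f_j:=e_j\wedge\bigwedge_{k<j}g_k$. Then $f_jT^k(x)f_j$ is bounded for every $k<j$, hence
$$\Bigl\|f_j\Bigl(\frac{1}{n}\sum_{k<j}\alpha_kT^k(x)\Bigr)f_j\Bigr\|_\infty\le\frac{1}{n}\sum_{k<j}|\alpha_k|\,\|f_jT^k(x)f_j\|_\infty\to0,$$
and this holds for each $\alpha$, since only finitely many fixed coefficients of $\alpha$ appear. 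Compressing by the smaller projection $f_j\le e_j$ preserves the convergence of the main term. Therefore $f_jT^j(M_n^{\theta^j(\alpha)}(T)(x))f_j$ converges in $\mathcal{M}$ for all $\alpha\in\mathcal{W}$.

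Finally set $h_j:=T^{-j}(f_j)$ and $h:=\bigwedge_{j\ge0}h_j$. Then $\tau(h^\perp)\le\sum_j\tau(f_j^\perp)\le\epsilon$. Moreover $h_jM_n^{\theta^j(\alpha)}(T)(x)h_j=T^{-j}\bigl(f_jT^j(M_n^{\theta^j(\alpha)}(T)(x))f_j\bigr)$ converges in norm, because $T^{-j}$ is an isometry of $\mathcal{M}$. Passing to the subprojection $h\le h_j$ keeps norm convergence, since $h(\cdot)h=h\,h_j(\cdot)h_j\,h$. This gives a single projection $h$ that works for every weight in $\bigcup_k\theta^k(\mathcal{W})$, so $x\in bWW_p^T(\bigcup_k\theta^k(\mathcal{W}))$.

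The main obstacle I expect is the bookkeeping needed to obtain one projection uniform in both $j$ and $\alpha$. Uniformity in $\alpha$ comes from the hypothesis, and uniformity in $j$ from the countable meet with geometric trace bounds. A further delicate point is justifying that the projections $g_k$ handling the unbounded terms $T^k(x)$ can be chosen independently of $\alpha$; this is why those terms are controlled via $\tau$-measurability rather than by any density argument.
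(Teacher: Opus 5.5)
Your proof is correct, but it takes a different route from the paper's.

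\textbf{The paper's route.} The paper never moves projections. For $y\in L_p$ it applies the hypothesis to the preimage $T^{-1}(y)$ and uses the identity
$$M_n^{\theta(\alpha)}(T)(y)=\tfrac{n+1}{n}M_{n+1}^{\alpha}(T)(T^{-1}(y))-\tfrac{1}{n}\alpha_0T^{-1}(y).$$
So the same projection $e$ that works for $T^{-1}(y)$ and all $\alpha\in\mathcal{W}$ also works for $y$ and all $\theta(\alpha)$. It then inducts on $k$ to get each $\theta^k(\mathcal{W})$, and finishes with the countable meet $\bigwedge_k e_k$ under trace bounds $\epsilon/2^{k+1}$.

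\textbf{Your route.} You apply the hypothesis to $x$ itself and push the shift onto $T^j$ applied to the average. You then pull the resulting projection back via $h_j=T^{-j}(f_j)$.
\begin{itemize}
\item This treats all shifts $j$ at once, with no induction.
\item Your use of $\tau$-measurability to make the finitely many correction terms $f_jT^k(x)f_j$ bounded makes explicit a point the paper leaves implicit. The paper tacitly needs $eT^{-1}(y)e\in\mathcal{M}$.
\end{itemize}

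\textbf{What your route costs.} You use the full automorphism structure of $T^{-j}$:
\begin{itemize}
\item its extension to $L_0(\mathcal{M},\tau)$ is multiplicative and agrees with the isometric extensions to $L_p$;
\item it maps projections to projections of equal trace;
\item it is isometric on $\mathcal{M}$.
\end{itemize}
The paper's argument needs only linearity of $T$ and surjectivity on $L_p$ (existence of a preimage of $y$). That is why the paper can later remark that Lemma \ref{l32} holds for any surjective $T\in DS^+(\mathcal{M},\tau)$. Your projection-transport step does not survive in that generality.
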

\begin{proof}
	We will first prove that $L_p(\mathcal{M},\tau)=bWW_p^{T}(\theta(\mathcal{W}))$. Assume $y\in L_p(\mathcal{M},\tau)$ and $\epsilon>0$. Let $e\in\mathcal{P}(\mathcal{M})$ be such that $\tau(e^\perp)\leq\epsilon$ and $(eM_n^\alpha(T)(T^{-1}(y))e)_{n=1}^{\infty}$ converges in $\mathcal{M}$ for every $\alpha\in\mathcal{W}$. 
	
	Fix $\alpha=(\alpha_j)_{j=0}^{\infty}\in\mathcal{W}$. Then, since
	$$
	\frac{1}{n}\sum_{j=0}^{n-1}\alpha_{j+1}T^j(y)
	=\frac{n+1}{n}\frac{1}{n+1}\sum_{j=0}^{n}\alpha_jT^j(T^{-1}(y))-\frac{1}{n}\alpha_0T^{-1}(y),
	$$
	we find that, if $T^{-1}(y)_\alpha$ is the b.a.u. limit of $(M_n^\alpha(T)(T^{-1}(y))_{n=1}^{\infty}$, then
	\begin{align*}
		\|e(M_n^{\theta(\alpha)}(T)(y)-T^{-1}(y)_\alpha)e\|_\infty
		\leq&\frac{n+1}{n}\|e(M_{n+1}^{\alpha}(T)(T^{-1}(y))-T^{-1}(y)_\alpha)e\|_\infty \\
		& \ \ \ +\frac{\|e\alpha_0T^{-1}(y)e\|_\infty}{n}\to0 \text{ as }n\to\infty.
	\end{align*}
	Since $\alpha\in\mathcal{W}$, $\epsilon>0$, and $y\in L_p$ were arbitrary, it follows that $L_p(\mathcal{M},\tau)=bWW_p^{T}(\theta(\mathcal{W}))$. By induction on $k$, it follows that $L_p(\mathcal{M},\tau)=bWW_p^{T}(\theta^k(\mathcal{W}))$ for every $k\in\mathbb{N}_0$.
	
	Fix again $y\in L_p$ and $\epsilon>0$. For every $k\in\mathbb{N}_0$ let $e_k\in\mathcal{P}(\mathcal{M})$ be such that $\tau(e_k^\perp)\leq\frac{\epsilon}{2^{k+1}}$ and $(e_kM_n^{\theta^{k}(\alpha)}(T)(y)e_k)_{n=1}^{\infty}$ converges in $\mathcal{M}$ as $n\to\infty$ for every $\alpha\in\mathcal{W}$. 
	If $e=\bigwedge_{k=0}^{\infty}e_k$, then $\tau(e^\perp)\leq\epsilon$ and $(eM_n^\alpha(T)(y)e)_{n=1}^{\infty}$ converges in $\mathcal{M}$ as $n\to\infty$ for every $\alpha\in\bigcup_{k=0}^{\infty}\theta^k(\mathcal{W})$. Hence $y\in bWW_p^{T}(\bigcup_{k=0}^{\infty}\theta^k(\mathcal{W}))$, finishing the proof.
\end{proof}

The following is the main result of this section. Its proof is inspired by that of the commutative case by \c{C}\"{o}mez and Litvinov in \cite[Theorem 2.2]{comli2}. However, the last step of their proof evaluates the functions being studied at points from an appropriate set and then use properties of the corresponding scalar sequences to finish their argument. Lacking that type of pointwise evaluation argument in the von Neumann algebra setting, we instead employ a Banach-principle style argument to circumvent this issue and finish the proof.

\begin{teo}\label{TAdmSequencesAreGood}
	Let $(\mathcal{M},\tau)$ be a semifinite von Neumann algebra, let either $p=1$ and $q=\infty$ or $1<p,q<\infty$ satisfy $\frac{1}{p}+\frac{1}{q}=1$, and let $T:\mathcal{M}\to\mathcal{M}$ be a normal $\tau$-preserving $*$-automorphism. 
	Assume that
	$\mathcal{W}\subseteq W_q$ is such that 
	$L_p(\mathcal{M},\tau)=bWW_p^{T}(\mathcal{W})$.

	If $x=(x_k)_{k=0}^{\infty}\subset L_p(\mathcal{M},\tau)^h$ is a strongly $p$-bounded $T$-admissible process, then for every $\epsilon>0$ there exists $e\in\mathcal{P}(\mathcal{M})$ with $\tau(e^\perp)\leq\epsilon$ such that the sequence
	$\left(eA_n^\alpha(x)e\right)_{n=1}^{\infty}$ converges in $\mathcal{M}$ as $n\to\infty$ for every $\alpha\in\mathcal{W}$. 
	
	Furthermore, if $\alpha\in\mathcal{W}$ and $L^\alpha(x)\in L_0(\mathcal{M},\tau)$ denotes the b.a.u. limit of $(A_n^\alpha(x))_{n=1}^{\infty}$, then it is equal to the b.a.u. limit of $(M_n^\alpha(T)(w))_{n=1}^{\infty}$, where $w$ is the exact dominant of $x$.
\end{teo}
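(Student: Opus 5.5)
We compare the weighted averages of $x$ with those of $T^k(w)$, where $w$ is the exact dominant from Lemma \ref{l31}, and treat the difference as a small error. Since $x_k$ is only hermitian, we first reduce to the positive case. Adding $T^k(z)$ for a suitable $z\in L_p^+$ (for example $z=|x_0|$) gives $x_k+T^k(z)\geq T^k(x_0)+T^k(|x_0|)\geq0$ by admissibility. The sequence $(x_k+T^k(z))$ is again strongly $p$-bounded and $T$-admissible, and the averages of $T^k(z)$ are handled directly by the hypothesis $L_p=bWW_p^T(\mathcal{W})$. So we may assume $x_k\geq0$.

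Write $d_k:=T^k(w)-x_k\geq0$, so that
$$A_n^\alpha(x)=M_n^\alpha(T)(w)-A_n^\alpha(d),\qquad d=(d_k)_{k=0}^{\infty}.$$
By hypothesis the first term converges b.a.u. uniformly over $\alpha\in\mathcal{W}$ with a single projection. It therefore suffices to show that $A_n^\alpha(d)\to0$ b.a.u., again uniformly in $\alpha$.

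The key observation is that $T^{-k}(d_k)=w-T^{-k}(x_k)$ decreases in $k$ and $\|T^{-k}(d_k)\|_p\to0$ by Lemma \ref{l31}. Fix $N$ and set $u_N:=w-T^{-N}(x_N)\in L_p^+$, so that $\|u_N\|_p\to0$. For $k\geq N$ we have $0\leq d_k\leq T^k(u_N)$, since $T^k$ is a positive automorphism and $T^{-k}(d_k)\leq u_N$.

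Split $A_n^\alpha(d)$ into the terms with $k<N$ and those with $k\geq N$. The first block is a fixed finite sum divided by $n$; for each $k<N$, $\alpha_k/n\to0$, so it tends to $0$ b.a.u. (even in $L_p$ norm times a vanishing scalar) after cutting by a projection making the finitely many $d_k$ bounded. For the tail, a Banach-principle estimate is needed. We would like to dominate $\bigl\|e\,\tfrac1n\sum_{k=N}^{n-1}\alpha_kd_ke\bigr\|_\infty$ by the maximal function of Theorem \ref{WeightedErgodicTheoryResults} applied to $u_N$ with weight $|\alpha|$.

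Here lies the main obstacle. Complex or signed weights do not preserve order, so $d_k\leq T^k(u_N)$ does not directly give $\|e\sum\alpha_kd_ke\|\leq\|e\sum|\alpha_k|T^k(u_N)e\|$. I would try the following for positive operators $0\leq a_k\leq b_k$ and scalars $\alpha_k$. Writing $a_k=b_k^{1/2}c_kb_k^{1/2}$ is not convenient. Instead, split $\alpha$ into real and imaginary, positive and negative parts; for $\beta_k\geq0$ one has $0\leq e\sum\beta_kd_ke\leq e\sum\beta_kT^k(u_N)e$, which yields the norm bound. This requires the maximal inequality of Theorem \ref{WeightedErgodicTheoryResults} for the weights $\beta\in W_q$ with $|\beta|_{W_q}\leq|\alpha|_{W_q}$. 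That theorem's supremum is over all of $W_q$, so this is available. Consequently, for each $\lambda>0$ there is a projection $f_N$ with $\tau(f_N^\perp)\leq(C_p\|u_N\|_p/\lambda)^p$ and
$$\sup_{n,\alpha}\|f_NA_n^{\alpha}(d^{\geq N})f_N\|_\infty\leq4\lambda|\alpha|_{W_q}.$$

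Finally, choose $N_j$ with $\|u_{N_j}\|_p$ small enough that $\tau(f_{N_j}^\perp)\leq\epsilon2^{-j-1}$ at level $\lambda_j=2^{-j}$. Intersect these projections with the projection for the $w$-averages (possibly applied to $T^{-1}$-shifted data via Lemma \ref{l32}, if an index shift arises) and with the projections controlling the finite heads. This gives $e$ with $\tau(e^\perp)\leq\epsilon$ and $\limsup_n\|eA_n^\alpha(d)e\|_\infty\leq4\cdot2^{-j}|\alpha|_{W_q}$ for all $j$, so the limit is $0$ for every $\alpha\in\mathcal{W}$. This proves both the convergence and the identification $L^\alpha(x)=\lim M_n^\alpha(T)(w)$. (If the positivity reduction was used, the identification is applied to the shifted process and the $z$ part cancels.)
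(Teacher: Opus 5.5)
Your proof is correct, but it uses a different decomposition from the paper's.

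\textbf{The paper's route.} The paper compares $x$ with the truncated processes $y^{(m)}$, where $y^{(m)}_k=T^{k-m}(x_m)$ for $k>m$. The error $0\le x_k-y^{(m)}_k\le T^k(z_m)$ is handled exactly as you handle your tail, via the four positive parts of $\alpha$ and the maximal inequality of Theorem \ref{WeightedErgodicTheoryResults}. However, the main term $A_n^\alpha(y^{(m)})$ is essentially $M_{n-m}^{\theta^m(\alpha)}(T)(x_m)$. So the paper needs Lemma \ref{l32}, then a Cauchy argument, and then a separate step to identify the limit with $L_T^\alpha(w)$.

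\textbf{Your route.} You write $A_n^\alpha(x)=M_n^\alpha(T)(w)-A_n^\alpha(d)$. The main term then converges directly, because $w\in L_p=bWW_p^T(\mathcal{W})$. No index shift ever arises, so your hedge about Lemma \ref{l32} can be dropped.

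\textbf{What your route buys.} First, the identification $L^\alpha(x)=L_T^\alpha(w)$ is immediate, since you show $A_n^\alpha(x)-M_n^\alpha(T)(w)\to0$ b.a.u.\ with the same projection. The paper's identification step, as written, bounds $\|L_T^\alpha(x)-L_T^\alpha(w)\|_p$ through $\|L_T^\alpha(x)-A_n^\alpha(x)\|_p$ and $\|M_n^\alpha(T)(w)-L_T^\alpha(w)\|_p$. That tacitly assumes $L_p$-norm convergence to the b.a.u.\ limits, which is not established and can fail, e.g.\ for $p=1$ with infinite trace. That step should instead be argued via convergence in measure.

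Second, invertibility of $T$ enters your argument only through the construction of $w$ and the bound $d_k\le T^k(u_N)$. The same splitting therefore works for Theorem \ref{TAdmDS}:
$\frac1n\sum_{k<n}\alpha_kT^k(x_k)=M_n^\alpha(T)(x)-\frac1n\sum_{k<n}\alpha_kT^k(x-x_k)$, with $0\le T^k(x-x_k)\le T^k(x-x_N)$ for $k\ge N$. This proves Theorem \ref{TAdmDS} without the shift-invariance hypothesis on $\mathcal{W}$.

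\textbf{Minor fixes.} Tidy the measure budget: reserve, say, $\epsilon/2$ for the projections $f_{N_j}$, $\epsilon/4$ for the $w$-projection, and $\epsilon/4$ for the projections making the countably many $d_k$ bounded. Also, ``uniformly in $\alpha$'' and the displayed $\sup_{n,\alpha}$ should read ``simultaneously for all $\alpha$ with one projection'' and $\sup_n$ for each fixed $\alpha$, respectively.
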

\begin{proof}
	Since $T^k(x_0)\leq x_k$ for every $k\in\mathbb{N}_0$ by $T$-admissibility, the sequence $x^\prime:=(x_k-T^k(x_0))_{k=0}^{\infty}$ is a strongly $p$-bounded $T$-admissible process with $x_k-T^k(x_0)\geq0$ for every $k\in\mathbb{N}_0$. Since $x_0\in bWW_p^{T}(\mathcal{W})$ by assumption, the conclusion will hold for $x$ if and only if it holds for $x^\prime$. Due to this, without loss of generality we will assume that $x_k\geq0$ for every $k\in\mathbb{N}_0$.
	
	Assume $\epsilon>0$. Let $w=\sup_{k\in\mathbb{N}_0}T^{-k}(x_k)\in L_p(\mathcal{M},\tau)^+$ be the exact dominant of $x$ obtained from Lemma \ref{l31}, so that $\|w-T^{-n}(x_n)\|_p\to0$ as $n\to\infty$ and $x_n\leq T^n(w)$ for every $n\in\mathbb{N}_0$.
	
	For every $m\in\mathbb{N}_0$ write $y^{(m)}:=(y_k^{(m)})_{k=0}^{\infty}\subset L_p$, where
	$$
	y_k^{(m)}
	:=\left\{\begin{array}{cl}
		T^{k-m}(x_m),&\text{ if }k>m \\
		x_k, & \text{ if }0\leq k\leq m
	\end{array}\right.
	$$
	Observe than that $x_k-y_k^{(m)}=0$ if $0\leq k\leq m$, while if $k>m$ we have
	$$0\leq T^k(T^{-k}(x_k)-T^{-m}(x_m))=x_k-T^{k-m}(x_m)=x_k-y_k^{(m)}\leq T^k(z_m),$$ where $z_m:=w-T^{-m}(x_m).$ By the definition of $w$ we have  $$\lim_{m\to\infty}\|z_m\|_p=\lim_{m\to\infty}\|w-T^{-m}(x_m)\|_p=0.$$
	
	Due to this, since the family 
	$\left(\frac{1}{|\alpha|_{W_q}}M_n^\alpha(T)\right)_{(n,\alpha)\in\mathbb{N}\times W_q}$ is b.u.e.m. at zero on $(L_p,\|\cdot\|_p)$ by \cite[Proposition 3.1]{ob4} when $1<p<\infty$ or by \cite[Theorem 2.1]{cl1} when $p=1$ (where we adopt the convention that $\frac{0}{0}:=0$ as in \cite{ob4}), 
	for every $r\in\mathbb{N}$ there exists $m(r)\in\mathbb{N}_0$ and $f_r\in\mathcal{P(\mathcal{M})}$ such that
	$$\tau(f_{r}^\perp)\leq\frac{\epsilon}{2^{r+1}} \ \text{ and } \sup_{(n,\alpha)\in\mathbb{N}\times  W_q}\frac{\|f_{r}M_n^\alpha(T)(z_{m(r)})f_r\|_\infty}{|\alpha|_{W_q}}\leq\frac{1}{r}.$$
	
	Let $f=\bigwedge_{r=1}^{\infty}f_{r}$. Then $\tau(f^\perp)\leq\frac{\epsilon}{2}$ and
	$$\sup_{(n,\alpha)\in\mathbb{N}\times W_q}\frac{\|fM_n^\alpha(T)(z_{m(r)})f\|_\infty}{|\alpha|_{W_q}}\leq\frac{1}{r}\text{ for every }r\in\mathbb{N}.$$

	Fix $r\in\mathbb{N}$. Recall that $0\leq x_k-y_k^{(m(r))}\leq T^k(z_{m(r)})$ for every $k\geq0$. From this, if $\alpha_k\in[0,\infty)$ for every $k\in\mathbb{N}_0$ we find that
	$$0
	\leq\frac{1}{n}\sum_{k=0}^{n-1}\alpha_k(x_k-y_k^{(m(r))})
	\leq\frac{1}{n}\sum_{k=0}^{n-1}\alpha_kT^k(z_{m(r)})
	=M_n^\alpha(T)(z_{m(r)})$$

	If $\alpha=(\alpha_k)_{k=0}^{\infty}\in \mathcal{W}$, then for every $k\geq0$ we may write $\alpha_k=\sum_{s=0}^{3}i^s\alpha_{s,k}$, where $\alpha_s=(\alpha_{s,k})_{k=0}^{\infty}\in W_q^+$ and $|\alpha_s|_{W_q}\leq|\alpha|_{W_q}$ for every $s=0,1,2,3$. 
	Hence, by the above, we find that for every $n\in\mathbb{N}$,
	\begin{align}\label{e31}
		\left\|f\left(\frac{1}{n}\sum_{k=0}^{n-1}\alpha_k(x_k-y_k^{(m(r))})\right)f\right\|_\infty
		=&\left\|f\left(\frac{1}{n}\sum_{k=0}^{n-1}\left(\sum_{s=0}^{3}i^s\alpha_{s,k}\right)(x_k-y_k^{(m(r))})\right)f\right\|_\infty \nonumber\\
		\leq&\sum_{s=0}^{3}|i^s|\left\|f\left(\frac{1}{n}\sum_{k=0}^{n-1}\alpha_{s,k}(x_k-y_k^{(m(r))})\right)f\right\|_\infty \\ 
		\leq&\sum_{s=0}^{3}\left\|f\left(\frac{1}{n}\sum_{k=0}^{n-1}\alpha_{s,k}T^k(z_{m(r)})\right)f\right\|_\infty \nonumber\\
		\leq&\sum_{s=0}^{3}\frac{|\alpha_s|_{W_q}}{r}
		\leq 4\frac{|\alpha|_{W_q}}{r}. \nonumber
	\end{align}
	Hence, $\|fA_n^\alpha(x-y^{(m(r))})f\|_\infty\leq4\frac{|\alpha|_{W_q}}{r}$ for every $n,r\in\mathbb{N}$ and $\alpha\in\mathcal{W}$.

	Now, if $m\in\mathbb{N}_0$, $n>m$, and $\alpha=(\alpha_j)_{j=0}^{\infty}\in\mathcal{W}$, we find that
	$$A_n^\alpha(y^{(m)})
	=\frac{1}{n}\sum_{k=0}^{n-1}\alpha_k y_k^{(m)}
	=\frac{1}{n}\sum_{j=0}^{m-1}\alpha_kx_k+\frac{n-m}{n}\frac{1}{n-m}\sum_{k=0}^{n-m-1}\alpha_{k+m}T^k(x_m).$$
	Let $g_m\in\mathcal{P}(\mathcal{M})$ be such that $\tau(g_m^\perp)\leq\frac{\epsilon}{2^{m+1}}$, $g_{m}x_{j}g_{m}\in\mathcal{M}$ for $j=0,...,m-1$, and $(g_{m}M_{n}^{\alpha}(T)(x_m)g_{m})_{n=1}^{\infty}$ converges in $\mathcal{M}$ for every $\alpha\in\mathcal{W}$.
	
	Let $g=\bigwedge_{m=0}^{\infty}g_m$. Then $\tau(g^\perp)\leq\frac{\epsilon}{2}$ and $(gM_n^\alpha(T)(x_m)g)_{n=1}^{\infty}$ converges in $\mathcal{M}$ for every $\alpha\in\mathcal{W}$ and $m\in\mathbb{N}_0$. Since $\|g(\sum_{k=0}^{m-1}\alpha_{k}x_{k})g\|_\infty$ is constant with respect to $n$, the first term above tends to $0$ as $n\to\infty$. Therefore, we find that if $L_T^{\theta^{m}(\alpha)}(x_m)$ is the b.a.u. limit of $(M_n^{\theta^{m}(\alpha)}(T)(x_m))_{n=1}^{\infty}$ (which exists by Lemma \ref{l32} and the assumption $L_p=bWW_p^{T}(\mathcal{W})$), then we have that $\|g(A_n^\alpha(y^{(m)})-L_T^{\theta^{m}(\alpha)}(x_m))g\|_\infty\to0$ as $n\to\infty$. Since $\alpha\in\mathcal{W}$ was arbitrary, it follows that
	$(gA_n^\alpha(y^{(m)})g)_{n=1}^{\infty}$ converges in $\mathcal{M}$ for every $\alpha\in\mathcal{W}$.

	Let $e=f\wedge g$, noting that $\tau(e^\perp)\leq\epsilon$. Observe that both $f$ and $g$ are independent of a choice of $\alpha\in\mathcal{W}$, which implies so too is $e$. Furthermore, all other properties of $f$ and $g$ (with the exception of $\tau(e^\perp)\leq\epsilon$) above are also satisfied by $e$.
	
	Fix $\alpha\in\mathcal{W}$, and let $\delta>0$. Let $r\in\mathbb{N}$ be such that $\frac{4|\alpha|_{W_q}}{r}<\frac{\delta}{3}$. Note then that
	$$\sup_{n\in\mathbb{N}}\|eA_n^\alpha(x-y^{(m(r))})e\|_\infty
	\leq\sup_{n\in\mathbb{N}}\|fM_n^\alpha(T)(z_{m(r)})f\|_\infty
	\leq\frac{\delta}{3}$$
	by inequality (\ref{e31}) above. 
	Since $(eA_n^{\alpha}(y^{(m(r))})e)_{n=1}^{\infty}$ converges in $\mathcal{M}$, there exists $N\in\mathbb{N}$ such that $\ell,n\geq N$ implies
	$\|e(A_n^{\alpha}(y^{(m(r))})-A_{\ell}^{\alpha}(y^{(m(r))}))e\|_\infty
	\leq\frac{\delta}{3}.$ 
	Hence, if $\ell,n\geq N$, then
	\begin{align*}
		\|e(A_n^\alpha(x)-A_{\ell}^\alpha(x))e\|_\infty
		\leq&\|eA_n^\alpha(x-y^{(m(r))})e\|_\infty+\|eA_{\ell}^\alpha(y^{(m(r))}-x)e\|_\infty \\
		& \ \ \ 
		+\|e\big(A_n^{\alpha}(y^{(m(r))})-A_{\ell}^{\alpha}(y^{(m(r))})\big)e\|_\infty \\
		&\leq 2\frac{4|\alpha|_{W_q}}{r}+\frac{\delta}{3}
		\leq\delta.
	\end{align*}
	Since $\ell,n\geq N$ and $\delta>0$ were arbitrary, it follows that the sequence is Cauchy, implying $(eA_n^\alpha(x)e)_{n=1}^{\infty}$ converges in $\mathcal{M}$ as $n\to\infty$. Since $\alpha\in\mathcal{W}$ and $\epsilon>0$ were arbitrary, the first part of the claim has been proven.
	
	Now fix again $\alpha=(\alpha_k)_{k=0}^{\infty}\in\mathcal{W}$, and let $L_T^\alpha(x)\in L_0$ be the b.a.u. limit of $(A_n^\alpha(x))_{n=1}^{\infty}$. Since $x$ is strongly $p$-bounded, and since $\alpha\in W_1$ since $W_q\subseteq W_1$, we find that for every $n\in\mathbb{N}$,
	$$\left\|\frac{1}{n}\sum_{k=0}^{n-1}\alpha_kx_k\right\|_p
	\leq\frac{1}{n}\sum_{k=0}^{n-1}|\alpha_k|\|x_k\|_p
	\leq|\alpha|_{W_1}\sup_{j\in\mathbb{N}_0}\|x_j\|_p
	<\infty.$$
	Since the closed unit ball of $L_p$ is closed in the local measure topology of $L_0(\mathcal{M},\tau)$ by \cite[Proposition 5.14]{ddp} since $L_p$ has the Fatou property (see that article for the definitions of local measure topology and Fatou property), and since b.a.u. convergence implies convergence in measure, which itself implies convergence with respect to the local measure topology, it follows that $L_T^\alpha(x)\in L_p$.
	
	Let $L_T^\alpha(w)\in L_p$ be the b.a.u. of $(M_n^\alpha(T)(w))_{n=1}^{\infty}$. Since $T^k$ is an isometry of $L_p$ for every $k\in\mathbb{Z}$, we have that $\|x_k-T^k(w)\|_p^p=\|T^{-k}(x_k)-w\|_p^p\to0$ as $k\to\infty$ by definition of $w$, implying the Ces\`{a}ro averages of this sequence tend to $0$ as well. Hence, by H\"{o}lder's inequality if $1<q<\infty$ and by the fact that $|\alpha_j|\leq|\alpha|_{W_\infty}$ for every $j\in\mathbb{N}_0$ if $q=\infty$, we find
	$$\left\|A_n^\alpha(x)-M_n^\alpha(T)(w)\right\|_p
	\leq|\alpha|_{W_q}\left(\frac{1}{n}\sum_{k=0}^{n-1}\|x_k-T^{k}(w)\|_p^p\right)^{1/p}
	\to0 \text{ as }n\to\infty.$$ 
	Therefore
	\begin{align*}
	\|L_T^{\alpha}(x)-L_T^{\alpha}(w)\|_p
	\leq&\lim_{n\to\infty}\Big[ \|L_T^{\alpha}(x)-A_n^{\alpha}(x)\|_p+\|A_n^{\alpha}(x)-M_n^{\alpha}(T)(w)\|_p  \\
	& \ \ \ \ \ \ \ \ \ \ \ \ \ \ \ \  +\|M_n^{\alpha}(T)(x)-L_T^{\alpha}(w)\|_p\Big] \\
	=&0,
	\end{align*}
implying $\|L_T^{\alpha}(x)-L_T^{\alpha}(w)\|_p=0$ and so $L_T^{\alpha}(x)=L_T^{\alpha}(w)$.
\end{proof}

See the articles by Litvinov \cite{li2} and Hong and Sun \cite{hs} for some examples of sets of weights $\mathcal{W}\subset W_\infty$ for which this result may be applied to (namely cases (i) and (iv) from the list in the introduction, respectively); these results only hold if it is also assumed that $\tau(\textbf{1})<\infty$ and that the operator $T$ is ergodic (in the sense that if $T(x)=x$, then there exists $c\in\mathbb{C}$ so that $x=c\textbf{1}$). In addition to those, applying standard approximation arguments to either one shows that if $T$ is ergodic and either $p=1$ and $q=\infty$ or $1<p,q<\infty$ satisfy $\frac{1}{p}+\frac{1}{q}=1$, then $\mathcal{W}=B_q$; the class of Besicovitch sequences from cases (ii) and (xi) from the introduction, respectively; may be used as well (the main idea can be seen in \cite[Proposition 3.6 and Corollary 3.8]{ob2}, \cite[Corollary 3.5]{ob4}, or in Theorem \ref{UnboundedIID-Duality} from Section \ref{NCWET} of this article).

If one lets $\mathcal{W}$ be a singleton set (i.e. if $\mathcal{W}=\{\alpha\}$ for some fixed $\alpha\in W_q$), then all of the weights discussed earlier in Section \ref{NCWET} (aside from the Besicovitch class from \cite{demeterjones} that we mentioned follows similar methods) may be used here as well.

One natural question that one may ask relating to the above is how far beyond the setting of $T$-admissible sequences may the methods above be generalized. In the following we will allow some error terms in the definition of $T$-admissible, but will require that the $L_p$-norms of the error terms be a summable sequence. It will turn out that any result about this new type of sequence can be found from those relating to strongly $p$-bounded $T$-admissible sequences.

\begin{pro}\label{TAdmGen}
Fix $1\leq p<\infty$. Let $x=(x_k)_{k=0}^{\infty}\subset L_p(\mathcal{M},\tau)^{+}$ be a strongly $p$-bounded sequence such that there exists another sequence $r=(r_k)_{k=0}^{\infty}\subset L_p(\mathcal{M},\tau)^{+}$ with $\sum_{j=0}^{\infty}\|r_j\|_p<\infty$ satisfying
$$T(x_k)\leq x_{k+1}+r_{k+1}$$ for every $k\in\mathbb{N}_0$. Then there exist strongly $p$-bounded $T$-admissible sequences $(y_k)_{k=0}^{\infty},(z_k)_{k=0}^{\infty}\subset L_p(\mathcal{M},\tau)^{+}$ such that
$$x_k=y_k-z_k \, \text{ for every } \, k\in\mathbb{N}_0.$$
\end{pro}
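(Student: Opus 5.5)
Here $T$ is the normal $\tau$-preserving $*$-automorphism fixed in this section. The plan is the classical trick of absorbing the error terms into a $T$-admissible correction. Define
$$z_k:=\sum_{j=1}^{k}T^{k-j}(r_j)\quad(z_0:=0),\qquad y_k:=x_k+z_k,$$
so that $x_k=y_k-z_k$ for every $k\in\mathbb{N}_0$ by construction.

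We first check that both sequences lie in $L_p^{+}$ and are strongly $p$-bounded. Since $T$ is positive and each $r_j\geq0$, we have $z_k\in L_p^+$. Since $T$ is an isometry of $L_p$,
$$\|z_k\|_p\leq\sum_{j=1}^{k}\|r_j\|_p\leq\sum_{j=0}^{\infty}\|r_j\|_p<\infty$$
uniformly in $k$. Then $y_k=x_k+z_k\geq0$ and $\sup_k\|y_k\|_p\leq\sup_k\|x_k\|_p+\sum_j\|r_j\|_p<\infty$, using the strong $p$-boundedness of $x$.

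Next we verify $T$-admissibility. For $z$, linearity of $T$ gives
$$T(z_k)=\sum_{j=1}^{k}T^{k+1-j}(r_j)=z_{k+1}-r_{k+1}\leq z_{k+1},$$
since $r_{k+1}\geq0$. For $y$, the hypothesis gives
$$T(y_k)=T(x_k)+T(z_k)\leq x_{k+1}+r_{k+1}+z_{k+1}-r_{k+1}=x_{k+1}+z_{k+1}=y_{k+1}.$$
Hence both sequences are strongly $p$-bounded $T$-admissible processes, which is the claim.

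The only point needing care is that the order relation on $L_0^+$ is compatible with addition and with the positive linear map $T$; these are standard facts, so I expect no real obstacle. One subtlety is that the argument uses that $T$ is an $L_p$-contraction and positive, but it never needs invertibility. The construction therefore works equally well for any $T\in DS^+(\mathcal{M},\tau)$.
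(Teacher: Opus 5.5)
Your proof is correct and is essentially the paper's argument. The paper sets $\sigma_k=\sum_{j=0}^{k}T^{-j}(r_j)$, $z_k=T^k(\sigma_k)=\sum_{j=0}^{k}T^{k-j}(r_j)$ and $y_k=x_k+z_k$, gets admissibility of $(z_k)$ from $\sigma_k$ being increasing, and gets admissibility of $(y_k)$ from the same telescoping identity you use. The only differences are cosmetic: you drop the unneeded $r_0$ term and never write $T^{-j}$, which makes your remark that invertibility of $T$ is not required transparent.
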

\begin{proof}
We will first show that for any increasing sequence $(\sigma_k)_{k=0}^{\infty}\subset L_p(\mathcal{M},\tau)^{+}$ that $(T^k(\sigma_k))_{k=0}^{\infty}$ is $T$-admissible.

Fix $k\in\mathbb{N}_0$. Then $\sigma_k\leq \sigma_{k+1}$, and since $T^{k+1}$ being positive implies it preserves the positive operator ordering it follows that $T(T^k(\sigma_k))=T^{k+1}(\sigma_k)\leq T^{k+1}(\sigma_{k+1})$, proving that $(T^k(\sigma_k))_{k=0}^{\infty}$ is $T$-admissible.

Now let $\sigma_k=\sum_{j=0}^{k}T^{-j}(r_j)$ for every $k\in\mathbb{N}_0$, noting that it is an increasing sequence in $L_p^+$ since each $T^{-j}(r_j)\in L_p^+$. If $z_k:=T^k(\sigma_k)$ for every $k\in\mathbb{N}_0$, then the above shows that $(z_k)_{k=0}^{\infty}$ is $T$-admissible. Notice that if $k\in\mathbb{N}_0$ is fixed, then $T$ being an isometry of $L_p$ implies that
$$\|z_k\|_p=\|T^k(\sigma_k)\|_p=\left\|\sum_{j=0}^{k}T^{-j}(r_j)\right\|_p\leq\sum_{j=0}^{k}\|T^{-j}(r_j)\|_p=\sum_{j=0}^{k}\|r_j\|_p\leq\sum_{j=0}^{\infty}\|r_j\|_p.$$ Hence
$$\sup_{k\in\mathbb{N}_0}\|z_k\|_p\leq\sum_{j=0}^{\infty}\|r_j\|_p<\infty,$$ implying that $(z_k)_{k=0}^{\infty}$ is strongly $p$-bounded.

Let $y_k=x_k+T^k(\sigma_k)$. Notice first that for any fixed $k\in\mathbb{N}_0$ that
$$\|y_k\|_p\leq\|x_k\|_p+\|T^k(\sigma_k)\|_p\leq\sup_{m\in\mathbb{N}_0}\|x_m\|_p+\sum_{j=0}^{\infty}\|r_j\|_p,$$ implying $\sup_{k\in\mathbb{N}_0}\|y_k\|_p<\infty$ and so $(y_k)_{k=0}^{\infty}$ is strongly $p$-bounded. Next, from the definition of $(x_k)_{k=0}^{\infty}$ we find that
$$T(x_k)\leq x_{k+1}+r_{k+1}=x_{k+1}+r_{k+1}+T^{k+1}(\sigma_k)-T(T^{k}(\sigma_k)),$$ implying
$$T(y_k)=T(x_k+T^k(\sigma_k))\leq x_{k+1}+r_{k+1}+T^{k+1}(\sigma_k).$$
Observe that
$$T^{-(k+1)}(r_{k+1})+\sigma_k=T^{-(k+1)}(r_{k+1})+\sum_{j=0}^{k}T^{-j}(r_j)=\sum_{j=0}^{k+1}T^{-j}(r_J)=\sigma_{k+1},$$ which implies that
$$x_{k+1}+r_{k+1}+T^{k+1}(\sigma_k)=x_{k+1}+T^{k+1}(\sigma_{k+1})=y_{k+1}.$$ Hence $T(y_k)\leq y_{k+1}$, implying that $(y_k)_{k=0}^{\infty}$ is $T$-admissible.

Since $x_k=(x_k+T^k(\sigma_k))-T^k(\sigma_k)=y_k-z_k$ for every $k\in\mathbb{N}_0$, the result has been proven.
\end{proof}

Since sums and differences of b.a.u. converge sequences also converge b.a.u., the following corollary immediately follows.
\begin{cor}
Theorem \ref{TAdmSequencesAreGood} holds for a sequence $x=(x_k)_{k=0}^{\infty}\subset L_p^+$ when it and another sequence $r=(r_k)_{k=0}^{\infty}\subset L_p^+$ satisfy the assumptions of Proposition \ref{TAdmGen}.
\end{cor}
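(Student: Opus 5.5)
The plan is to reduce to Theorem \ref{TAdmSequencesAreGood} by applying it separately to the two pieces produced by Proposition \ref{TAdmGen}, and then to combine the results by linearity.

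\emph{Step 1 (decomposition).} Since $x$ and $r$ satisfy the hypotheses of Proposition \ref{TAdmGen}, that proposition gives strongly $p$-bounded $T$-admissible sequences $y=(y_k)_{k=0}^{\infty}$ and $z=(z_k)_{k=0}^{\infty}$ in $L_p^+\subset L_p^h$ with $x_k=y_k-z_k$ for all $k$. By linearity of the weighted averages,
$$A_n^\alpha(x)=A_n^\alpha(y)-A_n^\alpha(z)$$
for every $n\in\mathbb{N}$ and every $\alpha\in\mathcal{W}$.

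\emph{Step 2 (apply the theorem to each piece).} Under the standing assumptions of Theorem \ref{TAdmSequencesAreGood} (the duality between $p$ and $q$, $T$ a normal $\tau$-preserving $*$-automorphism, and $L_p=bWW_p^T(\mathcal{W})$), the theorem applies to $y$ and to $z$. Fix $\epsilon>0$. This gives projections $e_y$ and $e_z$ with $\tau(e_y^\perp),\tau(e_z^\perp)\leq\epsilon/2$ such that $(e_yA_n^\alpha(y)e_y)_{n=1}^{\infty}$ and $(e_zA_n^\alpha(z)e_z)_{n=1}^{\infty}$ converge in $\mathcal{M}$ for every $\alpha\in\mathcal{W}$. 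Set $e=e_y\wedge e_z$, so that $\tau(e^\perp)\leq\epsilon$. Because $e\leq e_y$ we have $eA_n^\alpha(y)e=e\,(e_yA_n^\alpha(y)e_y)\,e$, and compressing by the fixed contraction $e$ preserves $\|\cdot\|_\infty$-convergence; the same holds for $z$. Hence $(eA_n^\alpha(x)e)_{n=1}^{\infty}$ converges in $\mathcal{M}$ for every $\alpha\in\mathcal{W}$, with a single projection $e$ that does not depend on $\alpha$. This is the Wiener--Wintner-type conclusion.

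\emph{Step 3 (identify the limit).} By the second part of Theorem \ref{TAdmSequencesAreGood}, the b.a.u. limits of $A_n^\alpha(y)$ and $A_n^\alpha(z)$ equal the b.a.u. limits of $M_n^\alpha(T)(w_y)$ and $M_n^\alpha(T)(w_z)$, where $w_y$ and $w_z$ are the exact dominants of $y$ and $z$. By linearity of $M_n^\alpha(T)$, the b.a.u. limit of $A_n^\alpha(x)$ is therefore the b.a.u. limit of $M_n^\alpha(T)(w_y-w_z)$.

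\emph{Interpreting the limit.} For $x$ itself there is no exact dominant, since $x$ need not be $T$-admissible. I would interpret the limit statement of the theorem with $w:=w_y-w_z$. One can check that $w$ equals the $L_p$-limit of $T^{-k}(x_k)$, because $T^{-k}(x_k)=T^{-k}(y_k)-T^{-k}(z_k)$ and each of the two terms on the right converges in $L_p$ to its exact dominant by Lemma \ref{l31}. This shows that $w$ is intrinsic to $x$, that is, independent of the decomposition chosen. This interpretive step is the only part that is not purely formal.
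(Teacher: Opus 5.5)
Your proposal is correct and is essentially the paper's argument: the paper just observes that Proposition \ref{TAdmGen} writes $x=y-z$ with $y,z$ strongly $p$-bounded $T$-admissible, so Theorem \ref{TAdmSequencesAreGood} applies to each piece, and sums and differences of b.a.u.\ convergent sequences converge b.a.u. Your extra details make explicit what the paper leaves implicit, and they are sound: the common projection $e_y\wedge e_z$, and the identification of the limit through $w=w_y-w_z$, which is the $L_p$-limit of $T^{-k}(x_k)$ and hence independent of the decomposition.
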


Finally, we discuss one more generalization to the setup of Theorem \ref{TAdmSequencesAreGood} which allows any operator in $DS^+(\mathcal{M},\tau)$ to be considered instead of only $*$-automorphisms. Among many other situations one may want to consider, this would allows one to extend the result to include normal $\tau$-preserving $*$-homomorphisms $T:\mathcal{M}\to\mathcal{M}$ which are not necessarily invertible. For this, we will need a little bit more notation. We note that this new setup does not include all strongly $p$-bounded $T$-admissible processes (if one adapts the definition of $T$-admissibility in the obvious way for operators in $DS^+(\mathcal{M},\tau)$). This will be discussed more later.

A mild extra condition will be needed on the sequences that will be allowed to be used. Recall that the shift of a sequence is defined by $\theta((\alpha_k)_{k=0}^{\infty}):=(\alpha_{k+1})_{k=0}^{\infty}$.

\begin{df}
If $1\leq q\leq\infty$ and $\mathcal{W}\subseteq W_q$, then call $\mathcal{W}$ \textit{shift-invariant} if $\theta(\mathcal{W})\subseteq\mathcal{W}$, so that $\mathcal{W}=\bigcup_{k=0}^{\infty}\theta^k(\mathcal{W})$. 
\end{df}

We will discuss some relevant shift-invariant sets in $W_q$ after a few comments about the proof of Theorem \ref{TAdmDS} due to some increased context of additional requirements of them.

\begin{teo}\label{TAdmDS}
Let $(\mathcal{M},\tau)$ be a semifinite von Neumann algebra, $T\in DS^+(\mathcal{M},\tau)$, and either $1<p,q<\infty$ satisfy $\frac{1}{p}+\frac{1}{q}=1$ or $p=1$ and $q=\infty$. Assume that $\mathcal{W}\subseteq W_q$ is shift-invariant and that $L_p(\mathcal{M},\tau)=bWW_p^{T}(\mathcal{W})$.

Let $(x_k)_{k=0}^{\infty}\subset L_p(\mathcal{M},\tau)^{h}$ be an increasing strongly $p$-bounded sequence (with respect to the ordering of $L_0(\mathcal{M},\tau)^{h}$), and define $x=\sup_{k\in\mathbb{N}_0}x_k\in L_p(\mathcal{M},\tau)^{h}$ (which exists by the assumptions on the sequence). 

Then for every $\epsilon>0$ there exists $e\in\mathcal{P(M)}$ such that $\tau(e^\perp)\leq\epsilon$ and, for every $\alpha=(\alpha_k)_{k=0}^{\infty}\in\mathcal{W}$,
$$\left\|e\left(\frac{1}{n}\sum_{k=0}^{n-1}\alpha_k T^k(x_k)- L_T^{\alpha}(x)\right)e\right\|_\infty\to0 \ \text{ as } \ n\to\infty,$$ 
where $L_T^{\alpha}(x)$ denotes the b.a.u. limit of $(M_n^{\alpha}(T)(x))_{n=1}^{\infty}$.
\end{teo}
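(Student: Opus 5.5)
We will mimic the truncation argument of Theorem \ref{TAdmSequencesAreGood}, with the roles of $T^{-m}(x_m)$ played by the $x_m$ themselves. First reduce to $x_k\geq0$: since $x_0\leq x_k\leq x$, replacing $x_k$ by $x_k-x_0$ and $x$ by $x-x_0$ changes the averages only by $M_n^{\alpha}(T)(x_0)$. That sequence converges b.a.u. to $L_T^{\alpha}(x_0)$ for all $\alpha\in\mathcal{W}$ with a common projection, because $x_0\in bWW_p^{T}(\mathcal{W})$. By the Beppo--Levi property of $L_p$ (as in Lemma \ref{l31}, via \cite{ddp}), $x$ exists in $L_p^+$ and $z_m:=x-x_m\geq0$ satisfies $\|z_m\|_p\to0$.

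For each $m$ we compare $A_n^{\alpha}:=\frac1n\sum_{k<n}\alpha_kT^k(x_k)$ with $M_n^{\alpha}(T)(x)$. Split
$$\frac1n\sum_{k=0}^{n-1}\alpha_k T^k(x-x_k)=\frac1n\sum_{k=0}^{m-1}\alpha_kT^k(x-x_k)+\frac1n\sum_{k=m}^{n-1}\alpha_kT^k(x-x_k).$$
The first term is a fixed finite sum divided by $n$. Choosing the projection so that the finitely many operators $T^k(x)$, $T^k(x_k)$ with $k<m$ (for all $m$) are compressed into $\mathcal{M}$, it tends to $0$ in $\|e\cdot e\|_\infty$ for each fixed $\alpha$. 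For $k\geq m$ we have $0\leq x-x_k\leq z_m$, so $0\leq T^k(x-x_k)\leq T^k(z_m)$ by positivity of $T^k$.

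Decompose $\alpha=\sum_{s=0}^3 i^s\alpha_s$ with $\alpha_s\geq0$ and $|\alpha_s|_{W_q}\leq|\alpha|_{W_q}$, exactly as in (\ref{e31}). For each $s$ the second term is then dominated in $L_0^+$ by
$$\frac1n\sum_{k=m}^{n-1}\alpha_{s,k}T^k(z_m)=\frac{n-m}{n}M_{n-m}^{\theta^m(\alpha_s)}(T)(T^m z_m),$$
or more simply by $M_n^{\beta}(T)(z_m)$ with $\beta_k=\alpha_{s,k}\chi_{k\geq m}$, where $|\beta|_{W_q}\leq|\alpha|_{W_q}$. Theorem \ref{WeightedErgodicTheoryResults}, i.e.\ the maximal inequality uniform over $(n,\beta)\in\mathbb{N}\times W_q$, then gives for each $r$ an index $m(r)$ and a projection $f_r$ with $\tau(f_r^\perp)\leq\epsilon2^{-r-2}$ and
$$\sup_{n,\beta}\frac{\|f_rM_n^{\beta}(T)(z_{m(r)})f_r\|_\infty}{|\beta|_{W_q}}\leq\frac1r.$$
Since $\|z_m\|_p\to0$ we can pick $m(r)$ so that $\big(C_p\|z_{m(r)}\|_p\,r\big)^p\leq\epsilon2^{-r-2}$. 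Put $f=\bigwedge_r f_r$, so the tail term satisfies $\|f(\cdot)f\|_\infty\leq4|\alpha|_{W_q}/r$ uniformly in $n$ and $\alpha$. Shift-invariance is not strictly needed for this truncated weight, because the maximal bound holds over all of $W_q$.

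Take $g$ with $\tau(g^\perp)\leq\epsilon/4$ such that $gM_n^{\alpha}(T)(x)g\to gL_T^{\alpha}(x)g$ for all $\alpha\in\mathcal{W}$, using $x\in bWW_p^{T}(\mathcal{W})$. Take $h$ with $\tau(h^\perp)\leq\epsilon/4$ such that $hT^k(x)h,\,hT^k(x_k)h\in\mathcal{M}$ for all $k$; this is a countable intersection with summable traces. Set $e=f\wedge g\wedge h$. Given $\delta>0$, choose $r$ with $4|\alpha|_{W_q}/r<\delta/3$. Then the head term is eventually below $\delta/3$, the $g$-convergence eventually contributes below $\delta/3$, and the tail term is below $\delta/3$. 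This yields the claim with $e$ independent of $\alpha$.

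The main obstacle is the domination step. The comparison must be done in the operator order inside $L_0^+$ before compressing by $e$, and must use only nonnegative weights; this is why the four-part splitting of $\alpha$ is needed. One must also check that $T^k(x-x_k)\leq T^k(z_m)$ survives compression, which holds because $e(\cdot)e$ preserves order. Where shift-invariance genuinely enters is only if one prefers to express the tail via $\theta^m(\alpha)$ and invoke convergence for shifted weights, as in Lemma \ref{l32}, which is unavailable for general $T\in DS^+$. The direct maximal-inequality route above avoids this.
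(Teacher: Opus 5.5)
Your argument is correct, and it takes a genuinely different and more economical route than the paper's sketch.

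\textbf{The paper's route.} The paper transplants the proof of Theorem \ref{TAdmSequencesAreGood}. It approximates $T^k(x_k)$ from below by the auxiliary sequence $y_k^{(m)}=T^k(x_m)$ for $k>m$. It obtains convergence of $(eA_n^{\alpha}(y^{(m)})e)_n$ by rewriting the tail as $\frac{n-m}{n}M_{n-m}^{\theta^m(\alpha)}(T)(T^m(x_m))$ and invoking shift-invariance of $\mathcal{W}$ in place of Lemma \ref{l32}. It controls the error $T^k(x_k)-y_k^{(m)}\leq T^k(z_m)$ with the maximal inequality, concludes that $(eA_n^{\alpha}e)_n$ is Cauchy, and only afterwards identifies the limit through an $L_p$-norm estimate.

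\textbf{Your route.} You compare from above with $T^k(x)$ directly. The error $T^k(x-x_k)$ is dominated by $T^k(z_m)$ for $k\geq m$. Theorem \ref{WeightedErgodicTheoryResults}, applied to the truncated nonnegative weights $\alpha_{s,k}\chi_{\{k\geq m\}}$, then controls this error uniformly in $n$ and $\alpha$.

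\textbf{What each approach buys.} Your route gives convergence and the identification of the limit as $L_T^{\alpha}(x)$ in one step. It uses only $x\in bWW_p^{T}(\mathcal{W})$, so, as you note, shift-invariance is never used. The same observation applies to the paper's own decomposition: $\frac{1}{n}\sum_{k=m+1}^{n-1}\alpha_kT^k(x_m)$ differs from $M_n^{\alpha}(T)(x_m)$ only by a fixed finite sum divided by $n$. Your version also sidesteps the paper's limit identification, which as written passes through $\|L_T^{\alpha}(x)-A_n^{\alpha}(x)\|_p\to0$. B.a.u.\ convergence does not provide this; that step has to be repaired by a convergence-in-measure argument, and your proof never needs it.

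One minor point: the reduction to $x_k\geq0$ is superfluous in your route, since $x-x_k\geq0$ automatically. You only need $x_k-x_0$ to apply Beppo--Levi and get $\|x-x_m\|_p\to0$, so the extra projection for $x_0$ can be dropped.
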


The proof is almost identical to that of Theorem \ref{TAdmSequencesAreGood} due to the assumptions made on $(x_k)_{k=0}^{\infty}$, $T$, and $\mathcal{W}$; we mention where these are used, what changes need to be made, and leave the rest of the details to the reader. 

The first main change is that one will use $T^k(x_k)$ in place of $x_k$ in the proof (so that $x_k$ itself now represents what the original proof has as $T^{-k}(x_k)$; we will mention more on this in a few paragraphs). One can still assume that $x_k\geq0$ for every $k\in\mathbb{N}_0$ by proving the result for $(x_k-x_0)_{k=0}^{\infty}$ since the fact that $(x_k)_{k=0}^{\infty}$ is increasing (so that $x_0\leq x_k\leq x_{k+1}$) implies that $0\leq x_k-x_0\leq x_{k+1}-x_0$ for every $k\in\mathbb{N}_0$ and the linearity of $T$ implies the result for $x_0$. The operator $w=\sup_{k\in\mathbb{N}_0}x_k$ would still exist for the same reasons as in Lemma \ref{l31} and would serve the same purpose. 

The fact that $T\in DS^+(\mathcal{M},\tau)$ shows that
$$\|T^k(w)-T^k(x_k)\|_p=\|T^k(w-x_k)\|_p\leq\|w-x_k\|_p\to0\text{ as }k\to\infty$$ and $T^k(x_k)\leq T^k(w)$ for every $k\in\mathbb{N}_0$. This shows why the $x_k$'s take the role of $T^{-k}(x_k)$, $z_m=w-x_m$, $y_k^{(m)}=T^k(x_m)$ for $k>m$, etc. still work with the change. 

Finally, the shift-invariance of $\mathcal{W}$ replaces the usage of Lemma \ref{l32} in the proof, since, as previously mentioned, the noncommutative version of the result \cite[Proposition 2.2]{comlio} for positive Dunford-Schwartz operators is still open.

\begin{rem} 
We will now discuss a number of shift-invariant subsets of $W_q$, where $1\leq q\leq\infty$, for which Theorem \ref{TAdmDS} applies. The list is effectively the same one that was given after Theorem \ref{TAdmSequencesAreGood}, but require a little bit more work to show why. Throughout this remark, we will let either $1<p,q<\infty$ be such that $\frac{1}{p}+\frac{1}{q}=1$ or $p=1$ and $q=\infty$.

Before that, we will use some extra notation. If $\alpha\in W_q$, then define $\mathcal{W}_{\alpha}:=\{\theta^k(\alpha):k\in\mathbb{N}_0\}$. Since $\theta$ is a contraction of $W_q$ (and is an isometry if $1\leq q<\infty$), it follows that $\mathcal{W}_{\alpha}\subset W_q$. Furthermore, $\mathcal{W}_{\alpha}$ is shift-invariant immediately from its definition.

One finds that $B_q$ is shift-invariant by showing that trigonometric polynomials are shift-invariant and then using the isometric (or contractive if $q=\infty$) properties of the shifting operator $\theta$ on $W_q$ (since $\|\alpha-P\|_{W_q}<\epsilon$ implies $\|\theta(\alpha)-\theta(P)\|_{W_q}<\epsilon$ for any $\alpha\in W_q$ and trigonometric polynomial $P$). From this it follows that $\mathcal{W}_{\alpha}\subset B_q$ for every $\alpha\in B_q$.

If $\mathcal{M}$ is tracial (i.e. $\tau(\textbf{1})<\infty$) and if $T$ is induced by a normal $\tau$-preserving $*$-homomorphism from $\mathcal{M}$ to itself (which need not be a $*$-automorphism), then $L_p(\mathcal{M},\tau)=bWW_p^{T}(B_q)$ by \cite{li2}.

Similarly, if $(\mathcal{M},\tau)$ is semifinite and has a separable predual and $\alpha\in B_q$ then it follows by \cite{cls} if $p=1$ and $q=\infty$ and \cite{ob4} if $1<p,q<\infty$ and $\frac{1}{p}+\frac{1}{q}=1$ that Theorem \ref{TAdmDS} applies when $\mathcal{W}=\{\alpha\}$ by using the fact that $L_p(\mathcal{M},\tau)=bWW_p^{T}(\{\theta^k(\alpha)\})$ and $\theta^k(\alpha)\in B_q$ for every $k\in\mathbb{N}_0$ implies $L_p(\mathcal{M},\tau)=bWW_p^T(\mathcal{W}_{\alpha})$ (following the same reasoning as the proof of Lemma \ref{l32}) and then applying the result to $\mathcal{W}_{\alpha}$, which contains $\alpha$.

One can also check that if $\alpha\in W_\infty$ satisfies Inequality (\ref{ConvergenceCriteriaEquation}) in the statement of Theorem \ref{ConvergenceCriteriaI}, then similar reasoning as for why the first inequality in Remark \ref{ReIndexing} holds shows that $\theta(\alpha)$ satisfies Inequality (\ref{ConvergenceCriteriaEquation}) as well. Hence $\theta^k(\alpha)$ satisfies Inequality (\ref{ConvergenceCriteriaEquation}) for every $k\in\mathbb{N}_0$, meaning $L_p(\mathcal{M},\tau)=bWW_p^{T}(\mathcal{W}_{\alpha})$ by the same reasoning as the previous paragraph.

Due to this, every bounded weight $\alpha$ for which Theorem \ref{ConvergenceCriteriaI} is applicable, including conditions (vi) through (x) from the Introduction, as well as special cases of (iii) and (v), may be used in Theorem \ref{TAdmDS} by applying the result to $\mathcal{W}_{\alpha}$; these sequences have no extra restriction on the semifinite von Neumann algebra $(\mathcal{M},\tau)$ and $T\in DS^+(\mathcal{M},\tau)$. 

In the case that $1<p,q<\infty$ and $\frac{1}{p}+\frac{1}{q}=1$, for similar reasons as the above one  can find that if $(Y_k)_{k=0}^{\infty}$ is an i.i.d. sequence on a probability space $(\Omega,\mathcal{F},\mu)$ satisfying $\mathbb{E}(|Y_0|^q)<\infty$, then there exists $\Omega^{\prime\prime}\in\mathcal{F}$ with $\mu(\Omega^{\prime\prime})=1$ such that $\omega\in\Omega^{\prime\prime}$ implies that $\mathcal{W}=\{(Y_k(\omega))_{k=0}^{\infty}\}$ is also applicable to Theorem \ref{TAdmDS}. 

This is due to the fact that $(Y_{k+m})_{k=0}^{\infty}$ can be seen to also be an i.i.d. sequence on $(\Omega,\mathcal{F},\mu)$ with $\mathbb{E}(|Y_m|^q)=\mathbb{E}(|Y_0|^q)<\infty$ for every $m\in\mathbb{N}_0$. One can then take $\Omega^{\prime\prime}$ to be the intersection of the sets $\Omega_{m}^{\prime}$ obtained in Theorem \ref{UnboundedIID-Duality} for each shift $(Y_{k+m})_{k=0}^{\infty}$ of $(Y_k)_{k=0}^{\infty}$, which would be a countable intersection of full measure subsets of $\Omega$ and therefore have full measure itself.
\end{rem}

\begin{rem}
For a fixed $1\leq p<\infty$ we note that if $T\in DS^+(\mathcal{M},\tau)$ was surjective from  $L_p(\mathcal{M},\tau)$ to itself then the shift-invariance of $\mathcal{W}$ could be removed. This is due to the fact that Lemma \ref{l32} would hold for such a $T$ since surjectivity was the only special property of $T$ used in its proof.
\end{rem}




We note that while $(T^k(x_k))_{k=0}^{\infty}$ is a strongly $p$-bounded $T$-admissible for any increasing strongly $p$-bounded sequence $(x_k)_{k=0}^{\infty}\subset L_p(\mathcal{M},\tau)^{h}$, it is not necessarily the case that every $T$-admissible process is of this form unless $T$ is induced by a normal $\tau$-preserving $*$-automorphism of $\mathcal{M}$ (as was seen above). We illustrate this with a simple example.

\begin{ex}
Let $e\in\mathcal{P(M)}\setminus\{0,\textbf{1}\}$ be such that $\tau(e^{\perp})<\infty$. Define $T_e(x):=exe$ for every $x\in L_0(\mathcal{M},\tau)$. It is easy to verify that $T_e\in DS^+(\mathcal{M},\tau)$. Fix $1\leq p<\infty$.

Let $(x_k)_{k=0}^{\infty}\subset L_p(\mathcal{M},\tau)^{+}$ be an increasing strongly $p$-bounded sequence such that $e^{\perp}x_0e^{\perp}\neq0$. It follows by basic facts of the noncommutative $L_p$-norms that
$$\sup_{k\in\mathbb{N}_0}\|e^{\perp}x_ke^{\perp}\|_p\leq\|e^{\perp}\|_\infty^2\sup_{k\in\mathbb{N}_0}\|x_k\|_p<\infty,$$  
by the ordering of $L_0^+$ that $e^{\perp}x_ke^{\perp}\geq0$, and from the definition of $T_e$ that
$$T_e(e^{\perp}x_ke^{\perp})=ee^{\perp}x_ke^{\perp}e=0\leq e^{\perp}x_{k+1}e^{\perp}\text{ for every }k\in\mathbb{N}_0.$$ Hence $(e^{\perp}x_ke^{\perp})_{k=0}^{\infty}$ is a strongly $p$-bounded $T_e$-admissible sequence in $L_p(\mathcal{M},\tau)^{+}$.

To show that Theorem \ref{TAdmDS} does not directly apply to $(e^{\perp}x_ke^{\perp})_{k=0}^{\infty}$, suppose that there exists a sequence $(y_k)_{k=0}^{\infty}\subset L_p(\mathcal{M},\tau)^{h}$ such that $T_e^k(y_k)=e^{\perp}x_ke^{\perp}$ for every $k\in\mathbb{N}_0$. Then
$$e^{\perp}x_ke^{\perp}=T_e^{k}(y_k)=ey_ke=e^{2}y_ke^{2}=e(ey_ke)e=eT_e^{k}(y_k)e=ee^{\perp}x_ke^{\perp}e=0,$$ implying $e^{\perp}x_ke^{\perp}=0$ for every $k\in\mathbb{N}_0$, and in particular $e^{\perp}x_0e^{\perp}$, which contradicts the assumption on $(x_k)_{k=0}^{\infty}$. 

Therefore the strongly $p$-bounded $T_e$-admissible sequence $(e^{\perp}x_ke^{\perp})_{k=0}^{\infty}$ cannot be found by applying $T_e^{k}$ to an increasing sequence $(y_k)_{k=0}^{\infty}$ of self-adjoint operators in $L_p(\mathcal{M},\tau)^{h}$ as considered in Theorem \ref{TAdmDS}.
\end{ex}

\end{document}